\newif\ifconfver
\newif\ifcutshort      
\newif\ifcutshortlvltwo  
\def\VectorFont{\bf}
\newcommand{\vx}{{\VectorFont x}}
\newcommand{\vy}{{\VectorFont y}}
\newcommand{\vc}{{\VectorFont c}}
\newcommand{\vu}{{\VectorFont u}}
\newcommand{\vz}{{\VectorFont z}}
\newcommand{\vw}{{\VectorFont w}}
\newcommand{\vlambda}{{\boldsymbol{\lambda}}}
\newtheorem{definition}{Definition}%
\newtheorem{assumption}{Assumption}
\newtheorem{remark}{Remark}%
\newtheorem{theorem}{Theorem}
\newtheorem{lemma}{Lemma}
\newtheorem{proof}{Proof}
\begin{document}

\bibliographystyle{IEEEtran}

\title{ {An Accelerated Stochastic ADMM for
Nonconvex and Nonsmooth Finite-Sum Optimization}}

\ifconfver \else {\linespread{1.1} \rm \fi

\author{\vspace{0.8cm}Yuxuan~Zeng,
~Zhiguo~Wang,
~Jianchao~Bai, 
~and Xiaojing~Shen\\
\thanks{
Yuxuan Zeng, Zhiguo Wang  (corresponding author), and Xiaojing Shen are with College of Mathematics, Sichuan University, 610064, Chengdu, China (e-mail: 2020222010085@stu.scu.edu.cn,~wangzhiguo@scu.edu.cn,~shenxj@scu.edu.cn).}
\thanks{Jianchao Bai  (corresponding author) is with Research \& Development Institute of Northwestern Polytechnical University in Shenzhen, Shenzhen 518057, China;  School of Mathematics and Statistics,  Northwestern Polytechnical
University, Xi'an  710129,      China (e-mail: jianchaobai@nwpu.edu.cn).}
}

\maketitle

\vspace{-1.5cm}
\begin{center}
\today
\end{center}\vspace{0.5cm}

\begin{abstract}
The nonconvex and nonsmooth finite-sum optimization problem with linear constraint has attracted much attention in the fields of artificial intelligence, computer, and  mathematics, due to its wide applications in machine learning and the lack  of efficient algorithms with convincing convergence theories. 
A popular approach to solve it is the  stochastic Alternating Direction Method of Multipliers (ADMM), but most stochastic ADMM-type methods focus on convex models. In addition, the variance reduction (VR) and acceleration techniques are useful tools in the development of stochastic methods due to their simplicity and practicability in providing acceleration characteristics of various machine learning models. {{However, it remains unclear whether accelerated SVRG-ADMM algorithm (ASVRG-ADMM), which extends SVRG-ADMM by incorporating momentum techniques, exhibits a comparable acceleration characteristic or convergence rate in the nonconvex setting. 
To fill this gap, we consider a general nonconvex nonsmooth optimization problem and study the convergence of ASVRG-ADMM.  
By utilizing a well-defined potential energy function, we establish its sublinear convergence rate $O(1/T)$, where $T$ denotes the iteration number. }}
Furthermore, under the additional {Kurdyka-Lojasiewicz (KL)} property {which is less stringent than the frequently used conditions for showcasing linear convergence rates, such as strong convexity}, we show that the ASVRG-ADMM sequence almost surely has a finite length and converges to a stationary solution with a linear convergence rate. Several experiments on solving the graph-guided fused lasso problem and regularized logistic regression problem validate that the proposed ASVRG-ADMM performs   {better} than the state-of-the-art methods.
\\\
\noindent {\bfseries Keywords}$-$ Nonconvex and nonsmooth optimization, {acceleration techinque,   stochastic ADMM,   convergence}. 
\\\\
\end{abstract}


\ifconfver \else \IEEEpeerreviewmaketitle} \fi

\vspace{-0.5cm}
\section{Introduction}

\label{sec:intro}
\setlength{\parindent}{2em} In recent years, machine learning has been studied and applied  extensively  in  system identification {\cite{pillonetto2014kernel}}, and automatic control {\cite{li2019performance}}.
In this paper, we consider a class of the nonconvex nonsmooth finite-sum optimization problems with a linear constraint in machine learning, as follows:
\begin{equation}\label{equ1}
\min_{\vx\in\mathbb{R}^{d_1} ,\vy\in\mathbb{R}^{d_{2}}} f(\vx)+g(\vy),\;\textup{s.t.}\;A\vx+B\vy=\vc,
\end{equation}
where $f(\vx)$ consists of the summation of $n$ components, i.e., $ f(\vx) := \frac{1}{n}\sum_{i=1}^n f_i(\vx) $ and $f_i: \mathbb{R}^{d_1}\rightarrow \mathbb{R}$ is assumed smooth, but can be nonconvex; $g(\vy)$ is convex but possibly nonsmooth; the constraint is used for encoding the structure pattern of model
parameters, where $A\in R^{d\times d_1}$, $B\in R^{d\times d_{2}}$, $\vc \in R^d$. To motivate our work, we first briefly discuss one common setting related to problem \eqref{equ1}.

Let us consider a binary classification task. Specifically, given a set of training samples $(a_i, b_i)$, $i=1,\ldots,n$, where $a_i$ is the input data and its corresponding label is $b_i\in \{1,-1\}$. Then we use the famous graph-guided fused Lasso model \cite{kim2009multivariate} to learn the parameter, which can be represented as
\begin{equation}\label{equ fused Lasso}
\min_{\vx}\frac{1}{n}\sum^{n}_{i=1} f_{i}(\vx)+\lambda_1 \|A\vx\|_{1},
\end{equation}
where the sigmoid loss function $f_{i}$ is as defined by $f_{i}(\vx)= \frac{1}{1+\exp \left(b_{i} a_{i}^{T} \vx\right)}$, which is a nonconvex function. $A$ is a given matrix decoded the sparsity pattern of the graph, which is obtained by sparse inverse covariance matrix estimation {\cite{FHT08}}. In order to solve \eqref{equ fused Lasso}, we can introduce an additional primal variable $\vy$, and
reformulate the problem (\ref{equ fused Lasso}) as a special case of (\ref{equ1})
\begin{align}
\min_{\vx,\vy} f(\vx) + g(\vy) \quad \mbox{s.t.} \ A\vx-\vy=0, \nonumber
\end{align}
where $f(\vx)=\frac{1}{n}\sum_{i=1}^n f_i(\vx)$ and $g(\vy)=\lambda_1\|\vy\|_1$.

The problem \eqref{equ1} also attracts attention and arises in many other {fields such as  statistical learning \cite{BJLZh18}}, computer vision \cite{papanikolopoulos1993adaptive} {and 3D CT image reconstruction \cite{BLZhang21}}.
In the following text, we focus on the composite finite-sum equality-constrained optimization problem  \eqref{equ1} and check the efficiency of this paper from {both algorithmic} and theoretical perspectives.

\subsection{Related work}
One of the popular methods to solve the general optimization \eqref{equ1} is ADMM algorithm {\cite{BHZ22,YJXG22,chao2021convergence}}. Specifically, at the $t$-th iteration, the typical sequential update steps are
{
\begin{equation}\label{batch ADMM}
\left\{\begin{array}{l}
\vy_{t+1}=\arg \min\limits_{\vy} \mathcal{L}_{\rho}\left(\vx_{t}, \vy, \vlambda_{t}\right), \\
\vx_{t+1}=\arg \min\limits_{\vx} \mathcal{L}_{\rho}\left(\vx, \vy_{t+1}, \vlambda_{t}\right), \\
\vlambda_{t+1}=\vlambda_{t}-\rho\left(A \vx_{t+1}+B \vy_{t+1}-\vc\right),
\end{array}\right.
\end{equation}
where $\vlambda$ is the dual variable and}
{	
\begin{align}\label{equa2}
\mathcal {L}_{\rho}(\vx,\vy,\vlambda) = &f(\vx) + g(\vy) - \langle\vlambda, A\vx+B\vy-\vc\rangle + \frac{\rho}{2} \|A\vx+B\vy-\vc\|^2
\end{align}
is an augmented Lagrangian (AL) function and ${ \rho>0}$ is a penalty parameter.} 
Many ADMM-type algorithms, such as proximal ADMM {\cite{GHWt18,YJXG22,wu2019general}}, inexact ADMM \cite{BHZ22,liu2020accelerated}, linearized/relaxed ADMM  {\cite{BHSZSZ22,Tao20}, and { consensus ADMM \cite{hong2016convergence,wang2021distributed}} have been proposed to solve the subproblem in \eqref{batch ADMM} efficiently. Recently,
a proximal AL method with an exponential averaging scheme was proposed by \cite{zhang2020global} to
handle nonconvex optimizations.

Large-scale optimization problems \eqref{equ1} typically involve a large sum of $n$ component functions, making it infeasible for deterministic ADMMs to compute the full gradient on all training samples at each iteration.
Stochastic gradient descent (SGD) has a much lower per-iteration complexity than deterministic methods by utilizing only one sample's gradient at each iteration.
Stochastic ADMM (SADMM), proposed in \cite{ouyang2013stochastic}, combines SGD and ADMM, and the update step of variable $\vx_{t+1}$ in SADMM is approximated as follows:
{
\begin{equation}\label{sADMM x}
\begin{split}
\vx_{t+1}=\arg\min_{\vx}  & ~\vx^{T}\nabla f_{i_{t}} (\vx_{t})+\frac{1}{2\eta_{t}} \|\vx-\vx_{t}\|^{2}_{Q}  +\frac{\rho}{2}\|A\vx + B\vy_{t+1} - \vc  - \frac{ \vlambda_{t}}{\rho}\|^{2},
\end{split}
\end{equation}
where we selet the random variable $i_{t}$ uniformly at random from $[n]:= \{1,\ldots,n\}$, $\eta_{t} \propto 1/\sqrt{t}$ is the step-size, and $\|\vx\|_{Q}^{2}=\vx^{T} Q \vx$ with given positive semi-definite {matrix $Q$.}
}

However, when the objective function is convex, stochastic ADMMs have been proved to own the worst-case $O(1/\sqrt{T})$ convergence rate, which is lower than the deterministic ADMMs with $O(1/T)$ convergence rate.
The gap in convergence caused by the high variances of stochastic gradients can be tackled by combining stochastic variance reduction gradient (SVRG) methods \cite{suzuki2014} with ADMM, resulting in the SVRG-ADMM in \cite{huang2016stochastic}. Unlike SADMM, variance-reduced ADMM aims to find an unbiased gradient estimator whose variance will vanish as the algorithm converges.
This encourages the selection of a larger step-size, such as a constant step-size $\eta$, to improve the convergence rate.
Specifically, we compute the gradient $\hat{\nabla} f(\vx_{t}^{s+1})$ for each iteration as follows:
{	
\begin{align}\label{f-grad-update}
\setlength{\abovedisplayskip}{-2pt}
\setlength{\belowdisplayskip}{1pt}
\hat{\nabla} f(\vx_{t}^{s+1}) = \nabla f_{i_t}(\vx_{t}^{s+1})-\nabla f_{i_t}(\widetilde{\vx}^s)+\nabla f(\widetilde{\vx}^s),  
\end{align}
{{where $i_t$ is defined in the equation (\ref{sADMM x}), and $\hat{\nabla} f(\vx_{t}^{s+1})$  is an unbiased estimator of the gradient $\nabla  f(\vx_{t}^{s+1})$, i.e., $\mathbb{E}[\hat{\nabla} f(\vx_{t}^{s+1})] =f(\vx_{t}^{s+1})$.}}
}

The Nesterov acceleration technique \cite{Nesterov2004} enjoys a fast convergence rate and has been successfully applied to train many large-scale machine learning models, including deep network models. Directly extending the Nesterov acceleration technique to the stochastic settings may aggravate the inaccuracy of the stochastic gradient and harm the convergence performance. \cite{liu2020accelerated} proposed ASVRG-ADMM, an accelerated SVRG-ADMM method by introducing a new Katyusha momentum acceleration trick \cite{AllenZhu2017KatyushaTF} into SVRG-ADMM. {{ Table \ref{tab algs} presents a comparison of several stochastic ADMM algorithms. All algorithms in the table incorporate the VR technique, while only the last three employ momentum acceleration techniques.
}}

Although ASVRG-ADMM has been successful in convex problems, its behavior in nonconvex nonsmooth problems remains largely unknown.
Many important applications, such as signal/image processing \cite{GezWu22}, computer vision \cite{papanikolopoulos1993adaptive}, and video surveillance \cite{YPongC17}, involve nonconvex objective functions that are beyond the scope of the theoretical conditions under which ASVRG-ADMM \cite{liu2020accelerated} has been proven to converge. 
{Therefore, there exists a gap in the theoretical convergence analysis between extant ASVRG-ADMM in the convex scenario and ASVRG-ADMM in the nonconvex scenario.}

{ To fill this gap, this paper addresses two crucial problems}:
\begin{description}
\item[Q1.] Whether the ASVRG-ADMM algorithm can converge to the stationary point for the common nonconvex nonsmooth finite-sum optimization?
\item[Q2.] If the algorithm converges, can we establish  its linear convergence rate such as R-linear\footnote{

{\setlength{\abovedisplayskip}{1pt}
\setlength{\belowdisplayskip}{1pt}
The sequence $\left\{x_n\right\}_{n \in \mathbb{N}}$   converges to $x^{*}$ R-linearly if
$$\left\|x_n-x^{*}\right\| \leq  y_n, $$
for all $n$, and the sequence $\left\{y_n\right\}_{n \in \mathbb{N}}$ converges to zero Q-linearly.
Then one can always find a suitable sequence}
{\setlength{\abovedisplayskip}{1pt}
\setlength{\belowdisplayskip}{1pt} $\left\{y_n\right\}_{n \in \mathbb{N}}$ as $y_n = c*\xi^n$, such that
$$\left\|x_n-x^{*}\right\| \leq c*\xi^n,$$
for all $n$, where $0<\xi<1$.}
} {convergence?}
\end{description}


\begin{table*}[t]
\footnotesize
\centering
\caption{Comparison of convergence rates of some stochastic ADMM algorithms.}
\label{tab algs}

\begin{tabular}{|c|c|c|c|c|c|}
\hline
\textbf{Algorithms}  & $\textbf{Strongly convex}$ & $\textbf{Convex}$   & $\textbf{Nonconvex}$ & $\textbf{Momentum}$  & $\textbf{KL property}$\\  \cline{1-6}
SAG-ADMM \cite{huang2016stochastic}  & $\diagdown$\ & $\diagdown$\  &  $\mathcal{O}(1/T) $  &\XSolid & \XSolid  \\ \cline{1-6}
SDCA-ADMM \cite{suzuki2014}   & Linear Rate & $\diagdown$\   & $\diagdown$\ & \XSolid & \XSolid \\ \cline{1-6}
SVRG-ADMM \cite{huang2016stochastic}    & $\diagdown$\ & $\diagdown$\ &  $ \mathcal{O}(1/T) $ & \XSolid & \XSolid  \\ 
\cline{1-6}
SPIDER-ADMM \cite{huang2019faster}    & $\diagdown$\  & $\diagdown$\  &  $ \mathcal{O}(1/T) $ & \XSolid & \XSolid  \\
\cline{1-6}
ASVRG-ADMM \cite{liu2020accelerated}    & Linear Rate & $ \mathcal{O}(1/T^{2}) $ &   $\diagdown$\ & \Checkmark & \XSolid \\ 
\cline{1-6}
AS-ADMM \cite{BHZ22}    &  Linear Rate & $ \mathcal{O}(1/T) $ &   $\diagdown$\ & \Checkmark & \XSolid \\ \cline{1-6}
\textbf{ASVRG-ADMM (ours)}    &$\diagdown$\ & $\diagdown$\ &  $\!\mathcal{O}(1/T)\!$ { \& linear rate}   & \Checkmark &\Checkmark \\ \cline{1-6}

\end{tabular}\label{b-table1}
\end{table*}

\subsection{Contributions}
In this paper, we develop  ASVRG-ADMM for the nonconvex and nonsmooth problems (\ref{equ1}).  ASVRG-ADMM is inspired by the momentum \cite{liu2020accelerated,BHZ22,AllenZhu2017KatyushaTF} and the variance reduction \cite{suzuki2014,huang2016stochastic} technique. In addition, it has several new features. First, compared with \cite{liu2020accelerated}, we consider the nonconvex objective function rather than the convex function, which poses a major obstacle to establishing the convergence of our algorithm without
the sufficiently decreasing property of the AL function.
Second, compared with SVRG-ADMM in \cite{huang2016stochastic}, 
{{our proposed algorithm employs the Katyusha momentum technique to improve  the accuracy of the stochastic gradient and facilitate faster convergence in practice.}}
In summary, our main contributions include four folds as follows:

\begin{itemize}
\item[(C1)] We propose a novel accelerated stochastic ADMM (ASVRG-ADMM) for the nonconvex and nonsmooth problem (\ref{equ1}), which enjoys the advantages of both the momentum acceleration technique \cite{AllenZhu2017KatyushaTF,liu2020accelerated} and  the variance reduction {technique}  \cite{huang2016stochastic,suzuki2014}.

\item[(C2)] {{By introducing a potential energy function related to the AL function of (\ref{equ1}), we establish the sublinear convergence rate $O(1/T)$, then we answer {\textbf{Q1}}.
While ASVRG-ADMM has the same convergence rate as SVRG-ADMM \cite{huang2016stochastic}, theoretically, we can achieve faster decay of the potential energy function than SVRG-ADMM by selecting the optimal momentum parameter. Furthermore, our algorithm achieves better convergence in numerical experiments.

}}

\item[(C3)]
We establish the almost sure R-linear convergence rate of ASVRG-ADMM by utilizing the extra KL property to answer {\textbf{Q2}}, as  {{listed in Table \ref{tab algs}
and demonstrated in  Theorem \ref{thm 3}}}.
Answering {\textbf{Q2}} under the nonconvex and nonsmooth setting is not a trivial task due to the inclusion of both the momentum term and variance reduction technique.
To the best of our knowledge, this is the first almost sure linear convergence result for the accelerated stochastic ADMM algorithm with variance reduction technique.

\end{itemize}

Various numerical experiments demonstrate the effectiveness of our ASVRG-ADMM, as it exhibits both lower variance and faster convergence compared to several advanced stochastic ADMM-type methods \cite{huang2016stochastic,ouyang2013stochastic}.

{\textbf{ Synopsis}:} The remainder of this paper is organized as follows. Section \ref{sec2} introduces the notations and mild assumptions. {We present the framework of  ASVRG-ADMM in
Section \ref{sec 3}. We
study its convergence 
and  extends} the theoretical analysis of the almost sure linear convergence rate under the extra {KL} property in Section \ref{sec: conv analysis}. Section \ref{sec 5} presents some numerical results to demonstrate the effectiveness and efficiency of the method. Finally, we conclude the paper with discussions.

{\bf Notation:} { The symbol $\| \cdot \|$ denotes the Euclidean norm of
a vector (or the spectral norm of a matrix)}, and $\|\cdot\|_{1}$ is
the $\ell_{1}$ -norm, i.e., $\|\vx\|_{1}=\sum_{i}\|x_{i}\|$. { We use
$\mathbb{R}, \mathbb{R}^{d}$, and $\mathbb{R}^{d \times d_{1}}$ to denote} the sets of real numbers, $d$ dimensional real column vectors, and $d \times d_{1}$ real matrices, respectively. 
Let $(\Omega, \mathcal{F}, P)$ be the probability space, $\vx: \Omega \rightarrow \mathbb{R}^d$ be a random variable, $\mathbb{E}[\vx]$ denote the expectation,
$\mathbb{E}[\vx \mid \mathcal{F}^{s}_{t}]$ denote the conditional expectation of $\vx$ given sub-$\sigma$-algebra $\mathcal{F}^{s}_{t}$, and employ the abbreviations ``a.s." for ``almost surely".		
Let $I$ denote the identity matrix and $\mathbf{0}$ denote the zero matrix.
We denote by $\nabla f(\vx)$ the gradient of $f(\vx)$ if it is differentiable, or $\partial f$ any of the  subgradients of the function $f(.)$.
{Let $\sigma^{A}_{min}$ and $\sigma^{A}_{max}$ be} the smallest and largest eigenvalues of matrix $AA^T$, respectively;
and let $\phi_{\min}$ and $\phi_{\max}$ denote the smallest and largest eigenvalues of positive matrix $Q$, respectively.
For a nonempty closed set $\mathcal{C}, dist(\vx, \mathcal{C})=\inf _{\vy \in \mathcal{C}}\|\vx-\vy\|$ denotes the distance from $\vx$ to set $\mathcal{C}$.

\section{Preliminaries}\label{sec2}
\subsection{Basic Assumptions and Definitions}

Before establishing the convergence analysis, we clarify the following proper assumptions. 
\begin{assumption}\label{assum grad f}
For $\forall i \in \{1,2,\cdots,n\}$, each function $f_i$ possesses a
Lipschitz continuous gradient with the constant $L_i>0$, that is
{
\begin{align}
\|\nabla f_i(\vx_1)-\nabla f_i(\vx_2)\| \leq L \|\vx_1 - \vx_2\|, \ \forall \vx_1,\vx_2 \in R^{d_1},\nonumber
\end{align}}
where $L=\max_i L_i$.
\end{assumption}

\begin{assumption}\label{assum2}
$f(\vx) $ and $g(\vy)$ are  lower bounded.
\end{assumption}

\begin{assumption}[feasibility]\label{assum4}
{ ${(\operatorname{Im}(B)\cup \vc)}\subseteq\operatorname{Im}(A)$, where $\operatorname{Im}(\cdot)$ returns the image of a matrix.
}
\end{assumption}

\begin{assumption}[Lipschitz sub-minimization paths]\label{Lip sub path}
For any fixed $\mathbf{x}, \operatorname{argmin}_y\{f(\mathbf{x}) + g(\vy): B \vy=\vu\}$ has a unique minimizer. In addition, $H: \operatorname{Im}(B) \rightarrow \mathbb{R}^{d_2}$ defined by $H(\vu) \triangleq \operatorname{argmin}_\vy \{f(\mathbf{x}) + g(\vy): B \vy=\vu\}$ is a Lipschitz continuous map.
\end{assumption}

Assumption \ref{assum grad f} is a standard smoothness assumption. Note that Assumptions  \ref{assum grad f}-\ref{assum2} are satisfied {for} the case of $f(\vx)=\frac{1}{1+\exp \left(\vx\right)}$, { $g(\vy)=\lambda_1\|\vy\|_1$}. Assumptions \ref{assum4}-\ref{Lip sub path} are proposed in \cite{wang2019global}, which weaken the full column rank assumption typically imposed on matrices $A$ and $B$. 
{{Moreover, Assumptions \ref{assum4}-\ref{Lip sub path} are the mild assumptions commonly used to  ensure that the dual variable $\vlambda$ is controlled by the primal variable,  see  Lemma \ref{lemma of lambda}.
}}

Next, we introduce the critical point and KL property used in this paper.


\begin{definition}\label{kkt}
The point $\left(\mathcal{X}^{*}, \mathcal{Y}^{*}\right)$ is denoted as the feasible solution set of the problem (\ref{equ1}). 
When there exist a optimal point $\left(\vx^{*},  \vy^{*}\right)$ and a dual variable $\vlambda^{*}$ for the problem (\ref{equa2}), the KKT conditions are satisfied as follows:
{
\begin{equation}
\left\{\begin{array}{l}
A^{T} \vlambda^{*}=\nabla f\left(\vx^{*}\right),  \\
B^{T} \vlambda^{*} \in \partial g\left(\vy^{*}\right),  \\
A \vx^{*}+B \vy^{*}-\vc=0.
\end{array}\right. \nonumber
\end{equation}
We also say that $\left(\vx^{*}, \vy^{*}, \vlambda^{*}\right)$ {satisfy} the KKT conditions is exactly a critical point of the AL function, denoted as $ \emph{crit}~ \mathcal{L}_{\rho}$.}
\end{definition}


\begin{definition}[{KL} property  \cite{attouch2010proximal}]\label{KŁ1}

A proper lower semicontinuous function $F: \mathbb{R}^{n} \to \mathbb{R} \cup \{+\infty\}$ is said to possess the KL property at $\vx^{*} \in$ dom $\partial F$ if there exist a constant $\eta \in (0, \infty]$, a neighborhood $U$ of $\vx^{*}$ and a continuous concave function $\varphi: [0, \eta) \to \mathbb{R}_{+}$, satisfying the following requirements:
{	\setlength{\abovedisplayskip}{2pt}
\setlength{\belowdisplayskip}{2pt}
\begin{itemize}
\item   $\varphi(0) = 0,~\varphi \in C^{1}((0, \eta))$, and  $\nabla \varphi(s) > 0$ for all $s \in (0, \eta)$;
\item for all $\vx \in U \cap [F(\vx^{*}) < F < F(\vx^{*})+ \eta]$, the KL inequality holds:
{
\begin{equation}\label{KŁ inequ}
\nabla \varphi(F(\vx) - F(\vx^{*}) ) dist (0, \partial F(\vx)) \geq 1.
\end{equation}}
\end{itemize}}
\end{definition}

{{Many effective functions satisfy the KL property, including but not limited to $\|\mathbf{x}\|_1$,  $\|\mathbf{A} \mathbf{x}-\mathbf{b}\|^2$, log-exp, and the logistic loss function $\psi(t)=\log \left(1+e^{-t}\right)$. For more examples, readers can refer to \cite{attouch2010proximal,bolte2014proximal}, \cite{milzarek2023convergence,chouzenoux2023kurdyka}, and \cite{yashtini2022convergence}(see Page 919).
Therefore, the potential energy function defined in (\ref{def of psi}) can easily satisfy the KL property.}}

\begin{remark}
When function $F$ in Definition \ref{KŁ1} is differentiable, then the {KL} property becomes the PL property {\cite{yi2022zeroth}}, which is {much weaker than the strongly convex property}, and widely used to show the global convergence in the field of deep learning. However, the {PL} property only deals with smooth differentiable functions. In this paper, the considered problem \eqref{equ1} not only has the smooth term but also has the nonsmooth term, thus we use the {KL} property to show the linear convergence.
\end{remark}

\vspace{-0.0cm}

\section{ Developments of ASVRG-ADMM}\label{sec 3}
\vspace{-0.0cm}
We propose ASVRG-ADMM, which incorporates momentum and variance reduction techniques to ensure fast convergence of the nonsmooth and nonconvex problem (\ref{equ1}). The algorithm, presented in Algorithm \ref{alg1}, consists of $S$ epochs, each containing $m$ iterations with $m$ typically chosen to be $O(n)$.

\begin{algorithm}[t]
\caption{ASVRG-ADMM {for solving (\ref{equ1})}}
\KwIn{parameter $m, T, S=[T/m], \eta,  \gamma, \rho, 0 \leq \theta \leq 1,$ and initial values of $\widetilde{\vx}^0=\vx_m^0$, $\vy_m^0$, $\vz_m^0 = \vx_m^0$ and $\vlambda_m^0$. }
\KwOut{Iterate $\vx$ and $\vy$ chosen  { from} $\{(\vx_{m}^S,\vy_{m}^S)\}$.}
\For{$s=0,1,\cdots,S-1$}{
$\vx_0^{s+1}=\vx_{m}^s$, $\vy_0^{s+1}=\vy_{m}^s$ and $\vlambda_0^{s+1}=\vlambda_{m}^s$;\\
$\nabla f(\widetilde{\vx}^s)=\frac{1}{n}\sum_{i=1}^n\nabla f_i(\widetilde{\vx}^s)$;\\
\For{$t=0,1,\cdots,m-1$}{
{Uniformly and randomly  choose a random variable} $i_t$ from $\{1,\cdots,n\}$ {{,  and compute $\hat{\nabla} f(\vx_{t}^{s+1})$ in (\ref{f-grad-update}) }};\\
\begin{align}
& \label{y-update} \vy^{s+1}_{t+1}=\arg\min_\vy \mathcal {L}_{\rho}(\vx^{s+1}_t,\vy,\vlambda_t^{s+1}); \\
&  \vz^{s+1}_{t+1}   =   \vz^{s+1}_{t}\ -\frac{\eta}{\gamma\theta} \Big[\hat{\nabla} f(\vx_{t}^{s+1})  +\rho A^{T}  \Big(A\vz^{s+1}_{t}  +B\vy^{s+1}_{t+1}-\vc-\frac{\vlambda^{s+1}_{t}}{\rho}\Big)\Big]; \label{z-update}  \\
& \vx^{s+1}_{t+1}= \theta \vz^{s+1}_{t+1} + (1-\theta)\widetilde{\vx}^s;  \label{x-update} \\
& \vlambda_{t+1}^{s+1} = \vlambda_{t}^{s+1}-\rho(A \vz_{t+1}^{s+1}+B\vy_{t+1}^{s+1}-\vc); \label{dual-update}
\end{align}
} 
$\widetilde{\vx}^{s+1}= \vx_m^{s+1}$.
}

\label{alg1}
\end{algorithm}


\subsection{Update of Primal Variable $\vy$}
The first step is to update the primal  variable $\vy$ in (\ref{y-update}) by solving
{
\begin{align}\label{update-y1}
\vy^{s+1}_{t+1} &=\arg\min_\vy \mathcal {L}_{\rho}(\vx^{s+1}_t,\vy,\vlambda_t^{s+1}) \\
\nonumber     & = \arg\min_\vy g(\vy)+ \frac{\rho}{2} \|A\vx^{s+1}_t+B\vy-\vc-\frac{\vlambda_t^{s+1}}{\rho} \|^2.
\end{align}}

If $g(\vy)=\lambda_1 \|\vy\|_1$, one can observe that the optimization \eqref{update-y1} has closed form solution with
{	
\begin{align*}
\vy^{s+1}_{t+1} =&\max\left(\mathbf{0}, A\vx^{s+1} - \frac{\vlambda_t^{s+1}}{\rho}-\frac{\lambda_1}{\rho}\right) - \max\left(\mathbf{0}, -A\vx^{s+1} + \frac{\vlambda_t^{s+1}}{\rho}-\frac{\lambda_1}{\rho}\right),
\end{align*}
where ${\lambda_1}$ is the regularization parameter.}



\subsection{Update  of Auxiliary Variable $\vz$}


Inspired by Katyusha \cite{AllenZhu2017KatyushaTF,liu2020accelerated} acceleration tricks, the second step is adopting an auxiliary variable $\vz$  before updating the primal variable $\vx$. 
A surrogate function is first introduced to approximate the objective function in (\ref{batch ADMM}) and the update rule of $\vz$ is formulated as follows: 
{		\begin{align}\label{update-z}
\vz^{s+1}_{t+1}=\arg\min_{\vz}~& (\vz- \vz^{s+1}_{t})^{T}\hat{\nabla} f(\vx_{t}^{s+1}) + \frac{\theta}{2\eta} \|\vz- \vz^{s+1}_{t}\|^{2}_{Q} +\frac{\rho}{2}\|A \vz+B\vy^{s+1}_{t+1}-\vc - \frac{\vlambda^{s+1}_{t}}{\rho}\|^{2},
\end{align}
where  $\eta>0$ is the learning rate or step-size, and $\hat{\nabla} f(\vx_{t}^{s+1})$ is the stochastic variance reduced gradient
estimator defined in \eqref{f-grad-update}.}

%
%
%
%
%

In general, the direct solution of (\ref{update-z}) can be challenging or even infeasible since it requires the inversion of $\frac{\theta}{\eta}Q + \rho A^TA$.
To alleviate this computational burden, the inexact Uzawa method can be employed to linearize the last term (\ref{update-z}). 
Specifically, we can select $Q=\gamma I -\frac{\eta\rho}{\theta}A^{T}A$ with $\gamma \geq 1+\frac{\eta \rho \|A^{T} A\|_{2}}{\theta}$ to ensure that $Q\succ I$, which results in the simpler closed-form solution of (\ref{update-z}) as  (\ref{z-update}).

\subsection{Momentum Accelerated Update Rule for  $\vx$}
The next step is our momentum accelerated update rule for the primal variable $\vx$, as given below:
{	
\begin{equation}\label{update-x}
\vx^{s+1}_{t+1}= \theta \vz^{s+1}_{t+1} + (1-\theta)\widetilde{\vx}^s = \widetilde{\vx}^s + \theta(\vz^{s+1}_{t+1} - \widetilde{\vx}^{s}),
\end{equation}
where $\theta$ is a momentum parameter, and $\theta(\vz^{s+1}_{t+1} - \widetilde{\vx}^{s})$ is a momentum term,}
{which uses the final output iterate from the previous epoch (updated every m iterations in the inner loop)}, i.e., $\widetilde{\vx}^{s}$, to speed up our algorithm. {{
In (\ref{update-x}), we leverage the Katyusha momentum technique \cite{AllenZhu2017KatyushaTF}, which uses a convex combination of the latest $\vz^{s+1}_{t+1}$ and snapshot $\widetilde{\vx}^{s}$,
whereas the Nesterov-type momentum technique uses a nonconvex extrapolation of the two latest iterates.
Compared with Nesterov momentum, Katyusha momentum ensures that
the iterates do not deviate too far from $\widetilde{\vx}^{s}$, which ensures accurate gradient estimation. }}

\subsection{Update  of Dual Variable $\vlambda$}
The final step is to update the dual variable by
$$\vlambda_{t+1}^{s+1} = \vlambda_{t}^{s+1}-\rho(A \vz_{t+1}^{s+1}+B\vy_{t+1}^{s+1}-c).$$
Note that we use auxiliary variable $\vz^{s+1}_{t+1}$ rather than the primal variable $\vx^{s+1}_{t+1}$ \cite{ouyang2013stochastic} in the dual update scheme, which helps us to establish the convergence of the proposed algorithm under nonconvex setting.

Before ending the section, let's make a few remarks about ASVRG-ADMM.

\begin{remark}
{{ Compared with SVRG-ADMM \cite{huang2016stochastic}, our ASVRG-ADMM incorporates a new acceleration step \eqref{update-x} and its sublinear and linear convergence have been established under weaker Assumption \ref{assum4}- \ref{Lip sub path}. Additionally, if we set $\theta$ in the update step of $\vx^{s+1}_{t+1}$ to 1, it results in the algorithm degenerating into SVRG-ADMM \cite{huang2016stochastic}, where $\vx^{s+1}_{t+1} = \vz^{s+1}_{t+1}$, which can be evidenced by the overlapping descent curves of ASVRG-ADMM1 ($\theta = 1$) and SVRG-ADMM in Figure \ref{fig_theta and rho} (left).}}
\end{remark}

\section{ Convergence  {Analysis}}\label{sec: conv analysis}
This section establishes the sublinear and linear convergence of our proposed ASVRG-ADMM for nonconvex and nonsmooth problems.
In the literature, the AL function \cite{wang2019global} is frequently used as the potential energy function with sufficiently decreasing property (see \cite{wang2019global}). However, in the nonconvex setting, our Algorithm \ref{alg1} deviates from this approach.
To ensure sufficient decrease and convergence, we present the following lemmas and theorems with a practical sequence of potential energy functions $\{(\Psi^{s}_{t})_{t=1}^m\}_{s=1}^S$ defined as follows:
{	
\begin{align}\label{def of psi}
\setlength{\abovedisplayskip}{2pt}
\setlength{\belowdisplayskip}{2pt}
\Psi^{s}_{t}:= & 
\mathcal {L}_{\rho}(\vx^{s}_{t},\vy^{s}_{t},\vlambda^{s}_{t})+ \beta_5 \|\vx^{s}_{t}-\vx^{s}_{t-1}\|^2+ h^{s}_{t}(\|\vx^{s}_{t}-\widetilde{\vx}^{s-1}\|^2+ \|\vx^{s}_{t-1}-\widetilde{\vx}^{s-1}\|^2),
\end{align}
where the positive sequence $\{(h^s_t)_{t=1}^m \}_{s=1}^S$ and the parameter $\beta_5>0$ will be decided in the following Lemma \ref{detail lemma of h}.

Let $\{(\vx^{s}_{t})_{t=1}^m\}_{s=1}^S$ be a stochastic process adapted to the filtration $\{(\mathcal{F}^{s}_{t})_{t=1}^m\}_{s=1}^S$,  where $\mathcal{F}^{s}_{t}=\sigma\left(\mathbf{x}_1^1, \mathbf{x}_2^1, \ldots \mathbf{x}^{s}_{t}\right)$, $\sigma(\cdot)$ is the $\sigma$-field, and we abbreviate
$\mathbb{E}[\cdot \mid \mathcal{F}^{s}_{t}]$ as $\mathbb{E}^{s}_{t}[\cdot ]$, see more details in \cite{BLZhang21,milzarek2023convergence}. Additionally, we define $\Psi^{*}$ as $\min_{s,t} \Psi_t^s$.}
{{ 
Now, we first provide an outline of the proof in this section. 
\begin{itemize}
\item Firstly, we propose a new potential energy function, which differs from the one in \cite{huang2016stochastic}, and prove the monotonicity of this function.
\item Next, we leverage the monotonicity property of the potential energy function to establish the convergence rate of the newly defined sequence in (\ref{equ 21}).
\item Lastly, we leverage the KL property along with finite-length property in Lemma \ref{thm 2} to achieve a almost sure  linear convergence result for ASVRG-ADMM.
\end{itemize}
}}

Our convergence analysis will utilize a bound on the term $ \mathbb{E}^{s+1}_{t}\|\vlambda^{s+1}_{t+1}-\vlambda^{s+1}_{t}\|^2$, which is given by Lemma \ref{lemma of lambda}.

\begin{lemma}[Upper bound of $ \mathbb{E}^{s+1}_{t}\|\vlambda^{s+1}_{t+1}-\vlambda^{s+1}_{t}\|^2$]\label{lemma of lambda}
Given Assumptions \ref{assum grad f}-\ref{assum4}, for $\{(\vx^{s}_t,\vy^{s}_t,\vlambda^{s}_t)_{t=1}^m\}_{s=1}^S$   generated by the Algorithm \ref{alg1},
{ we can have the following result}
{	
\begin{align}\label{upp1}
\mathbb{E}^{s+1}_{t}[\|\vlambda^{s+1}_{t+1}-\vlambda^{s+1}_{t}\|^2] 
\leq  &\frac{5L^2}{\sigma^{A}_{min}}  [\|\vx^{s+1}_{t}-\widetilde{\vx}^{s}\|^2] +   \frac{5L^2}{\sigma^{A}_{min}}\|\vx^{s+1}_{t-1}-\widetilde{\vx}^{s}\|^2
+ \frac{5\phi^2_{\max}}{\sigma^{A}_{min} \eta^2} \mathbb{E}^{s+1}_{t}[\|\vx^{s+1}_{t+1}-\vx^{s+1}_t\|^2] \nonumber\\
&+ \frac{5(L^2 \eta^2+\phi^2_{\max})}{\sigma^{A}_{min}\eta^2}\|\vx^{s+1}_{t}-\vx^{s+1}_{t-1}\|^2.
\end{align}}
\end{lemma}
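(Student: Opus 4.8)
The plan is to bound $\mathbb{E}^{s+1}_t\|\vlambda^{s+1}_{t+1}-\vlambda^{s+1}_t\|^2$ by first extracting an expression for $A^T\vlambda^{s+1}_{t+1}$ from the optimality condition of the $\vz$-update, and then using the nonexpansiveness granted by Assumption~\ref{assum4}. Concretely, I would first write the first-order optimality condition of \eqref{update-z}: since $\vz^{s+1}_{t+1}$ minimizes that strongly convex surrogate, we have
\begin{align}
\hat{\nabla} f(\vx^{s+1}_t) + \tfrac{\theta}{\eta}Q(\vz^{s+1}_{t+1}-\vz^{s+1}_t) + \rho A^T\bigl(A\vz^{s+1}_{t+1}+B\vy^{s+1}_{t+1}-\vc-\tfrac{\vlambda^{s+1}_t}{\rho}\bigr)=0. \nonumber
\end{align}
Recognizing that $\rho(A\vz^{s+1}_{t+1}+B\vy^{s+1}_{t+1}-\vc) - \vlambda^{s+1}_t = -\vlambda^{s+1}_{t+1}$ by the dual update \eqref{dual-update}, this rearranges to $A^T\vlambda^{s+1}_{t+1} = \hat{\nabla} f(\vx^{s+1}_t) + \tfrac{\theta}{\eta}Q(\vz^{s+1}_{t+1}-\vz^{s+1}_t)$. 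Writing the same identity at index $t$ (i.e. for the previous dual difference) and subtracting gives $A^T(\vlambda^{s+1}_{t+1}-\vlambda^{s+1}_t) = \hat{\nabla} f(\vx^{s+1}_t)-\hat{\nabla} f(\vx^{s+1}_{t-1}) + \tfrac{\theta}{\eta}Q\bigl[(\vz^{s+1}_{t+1}-\vz^{s+1}_t)-(\vz^{s+1}_t-\vz^{s+1}_{t-1})\bigr]$.

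Next I would convert this into a bound on $\|\vlambda^{s+1}_{t+1}-\vlambda^{s+1}_t\|^2$ itself. Assumption~\ref{assum4} ensures $\vlambda^{s+1}_{t+1}-\vlambda^{s+1}_t\in\operatorname{Im}(A)$ (the dual differences lie in $\operatorname{Im}(A)$ because the residuals do, by induction from the initialization), so $\|\vlambda^{s+1}_{t+1}-\vlambda^{s+1}_t\|^2 \le \tfrac{1}{\sigma^A_{\min}}\|A^T(\vlambda^{s+1}_{t+1}-\vlambda^{s+1}_t)\|^2$, where $\sigma^A_{\min}$ is the smallest nonzero eigenvalue of $AA^T$. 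Then I would expand the squared norm of the right-hand side using the elementary inequality $\|\sum_{j=1}^5 a_j\|^2\le 5\sum_{j=1}^5\|a_j\|^2$ after splitting into five pieces: the VR-gradient term uses Assumption~\ref{assum grad f} (Lipschitz continuity of each $\nabla f_i$, and hence of $\nabla f$) to get $\|\hat{\nabla} f(\vx^{s+1}_t)-\hat{\nabla} f(\vx^{s+1}_{t-1})\|\le L\|\vx^{s+1}_t-\vx^{s+1}_{t-1}\|$ inside the conditional expectation (the snapshot terms $\nabla f_{i_t}(\widetilde{\vx}^s)$ and $\nabla f(\widetilde{\vx}^s)$ cancel since the same sample $i_t$ appears); the $Q$-difference terms are controlled by $\|Q\|\le\phi_{\max}$ and then rewritten back in the $\vx$-variables via $\vz^{s+1}_{t+1}-\vz^{s+1}_t$, which by \eqref{x-update} equals $\tfrac{1}{\theta}(\vx^{s+1}_{t+1}-\vx^{s+1}_t)$ but more directly one uses $\theta(\vz^{s+1}_{t+1}-\vz^{s+1}_t) = (\vx^{s+1}_{t+1}-\vx^{s+1}_t) - (1-\theta)(\widetilde{\vx}^s-\widetilde{\vx}^s)=\vx^{s+1}_{t+1}-\vx^{s+1}_t$ since $\widetilde{\vx}^s$ is fixed within the epoch. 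Grouping, the $\|\vx^{s+1}_{t+1}-\vx^{s+1}_t\|^2$ coefficient becomes $5\phi^2_{\max}/(\sigma^A_{\min}\eta^2)$ and the $\|\vx^{s+1}_t-\vx^{s+1}_{t-1}\|^2$ coefficient absorbs both an $L^2$ contribution and a $\phi^2_{\max}/\eta^2$ contribution, yielding $5(L^2\eta^2+\phi^2_{\max})/(\sigma^A_{\min}\eta^2)$.

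The remaining two terms, involving $\|\vx^{s+1}_t-\widetilde{\vx}^s\|^2$ and $\|\vx^{s+1}_{t-1}-\widetilde{\vx}^s\|^2$, arise because the gradient-difference bound is actually applied more carefully: rather than differencing two $\vz$-optimality conditions, one writes $A^T\vlambda^{s+1}_{t+1}-A^T\vlambda^{s+1}_t$ by using the dual recursion together with the identity for $A^T\vlambda^{s+1}_{t+1}$, where the natural comparison point introduces $\nabla f(\vx^{s+1}_t)$ and $\nabla f(\widetilde{\vx}^s)$ separately; the cross terms $\|\nabla f_{i_t}(\vx^{s+1}_t)-\nabla f_{i_t}(\widetilde{\vx}^s)\|$ and $\|\nabla f_{i_t}(\vx^{s+1}_{t-1})-\nabla f_{i_t}(\widetilde{\vx}^s)\|$ then contribute the $5L^2/\sigma^A_{\min}$ coefficients on those two displacement-from-snapshot terms after taking conditional expectation and applying Assumption~\ref{assum grad f}. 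The main obstacle I anticipate is the bookkeeping in this last step: getting the VR estimator's difference decomposed into exactly the right five pieces so that the snapshot-gradient terms cancel where they should and survive (weighted by $L^2$) where they must, and simultaneously confirming that $\vlambda^{s+1}_{t+1}-\vlambda^{s+1}_t$ indeed stays in $\operatorname{Im}(A)$ throughout so the $1/\sigma^A_{\min}$ bound is legitimate. Everything else is the elementary $\|\sum a_j\|^2\le 5\sum\|a_j\|^2$ split plus substitution of Lipschitz and spectral-norm estimates.
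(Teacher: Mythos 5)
Your proposal follows essentially the same route as the paper's proof: the $\vz$-optimality condition combined with the dual update gives $A^T\vlambda^{s+1}_{t+1}=\hat{\nabla}f(\vx^{s+1}_t)-\tfrac{1}{\eta}Q(\vx^{s+1}_t-\vx^{s+1}_{t+1})$, Assumption~\ref{assum4} justifies the $1/\sigma^{A}_{\min}$ factor, and a five-term Cauchy--Schwartz split together with the SVRG variance bound $\mathbb{E}_t\|\hat{\nabla}f(\vx_t)-\nabla f(\vx_t)\|^2\le L^2\|\vx_t-\widetilde{\vx}\|^2$ and the Lipschitz continuity of $\nabla f$ produces exactly the four coefficients in \eqref{upp1}.

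One correction is needed: your parenthetical claim that $\|\hat{\nabla} f(\vx^{s+1}_t)-\hat{\nabla} f(\vx^{s+1}_{t-1})\|\le L\|\vx^{s+1}_t-\vx^{s+1}_{t-1}\|$ because ``the snapshot terms cancel since the same sample $i_t$ appears'' is false. The two estimators are built from \emph{different} samples $i_t$ and $i_{t-1}$, so only the deterministic $\nabla f(\widetilde{\vx}^s)$ terms cancel; if the full cancellation you describe actually occurred, the two snapshot-displacement terms $\|\vx^{s+1}_t-\widetilde{\vx}^s\|^2$ and $\|\vx^{s+1}_{t-1}-\widetilde{\vx}^s\|^2$ in \eqref{upp1} would never arise. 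The correct handling, which you do reach in your final paragraph and which is what the paper does, is to add and subtract the true gradients, writing the difference as $\Delta_t-\Delta_{t-1}+\bigl(\nabla f(\vx_t)-\nabla f(\vx_{t-1})\bigr)$ with $\Delta_t:=\hat{\nabla}f(\vx_t)-\nabla f(\vx_t)$, and then bound the two variance terms by $L^2$ times the respective squared distances to the snapshot (noting also that the second one involves $i_{t-1}$, not $i_t$). With that repair your argument coincides with the paper's.
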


\begin{proof}
See Appendix \ref{app:lemma of h}.
\end{proof}

\begin{lemma}\label{detail lemma of h}
For the sequence  $\{(\vx^{s}_t,\vy^{s}_t,\vlambda^{s}_t)_{t=1}^m\}_{s=1}^S$ generated by Algorithm \ref{alg1}, assuming that Assumptions \ref{assum grad f}-\ref{assum4} hold, and the positive sequences  $\{(h_t^s)_{t=1}^m\}_{s=1}^S$ and $\{(\Gamma^{s}_t)_{t=1}^m\}_{s=1}^S$ exist, we choose positive parameters $\eta$, $\theta$, $l_{1}$, $l_{2}$, $l_{3}$, $\alpha_1$, and $\rho$ such that
{	
$$   \Gamma^{s}_t >0 , \ \forall t \in [m], \ \forall s \in [S],$$ 
then the sequence $\{(\Psi^{s}_{t})_{t=1}^m\}_{s=1}^S$ is sufficiently decreasing:}
{	
\begin{align}\label{equ of decrease}
& \mathbb{E}_{t}^{s+1}[\Psi^{s+1}_{t+1}] \leq \Psi^{s+1}_{t}-\Gamma^{s+1}_t \mathbb{E}_{t}^{s+1}[\|\vx^{s+1}_{t+1}-\vx^{s+1}_t\|^2] - \left[h^{s+1}_{t+1}+\left(1+\alpha_1 \right)(h^{s+1}_{t+1}+ \beta_1)\right] \|\vx^{s+1}_{t-1}-\widetilde{\vx}^{s}\|^2.\end{align}}

\end{lemma}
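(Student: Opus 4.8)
The plan is to monitor the one-step change of the augmented Lagrangian $\mathcal{L}_\rho$ along the three block updates of Algorithm~\ref{alg1} and then absorb the residual cross terms into the two auxiliary quantities that appear in the potential energy $\Psi^s_t$. I would decompose
\[
\mathcal{L}_\rho(\vx_{t+1}^{s+1},\vy_{t+1}^{s+1},\vlambda_{t+1}^{s+1})-\mathcal{L}_\rho(\vx_t^{s+1},\vy_t^{s+1},\vlambda_t^{s+1})=\Delta_y+\Delta_x+\Delta_\lambda,
\]
where $\Delta_y$, $\Delta_x$, $\Delta_\lambda$ are the changes produced by the $\vy$-update~(\ref{y-update}), the $\vz/\vx$-updates~(\ref{z-update})--(\ref{x-update}), and the $\vlambda$-update~(\ref{dual-update}), respectively. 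Since $\vy_{t+1}^{s+1}$ is the exact minimizer of the convex map $\vy\mapsto\mathcal{L}_\rho(\vx_t^{s+1},\vy,\vlambda_t^{s+1})$, we immediately get $\Delta_y\le 0$, which is all that is needed from the $\vy$-block.

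For $\Delta_x$ I would use the $L$-smoothness of $f$ (Assumption~\ref{assum grad f}) to bound $f(\vx_{t+1}^{s+1})-f(\vx_t^{s+1})\le\langle\nabla f(\vx_t^{s+1}),\vx_{t+1}^{s+1}-\vx_t^{s+1}\rangle+\tfrac{L}{2}\|\vx_{t+1}^{s+1}-\vx_t^{s+1}\|^2$, expand the quadratic penalty difference, and substitute the first-order optimality condition of subproblem~(\ref{update-z}), namely $\hat{\nabla} f(\vx_t^{s+1})+\tfrac{\theta}{\eta}Q(\vz_{t+1}^{s+1}-\vz_t^{s+1})+\rho A^{T}(A\vz_{t+1}^{s+1}+B\vy_{t+1}^{s+1}-\vc-\vlambda_t^{s+1}/\rho)=0$. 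The elementary identities $\vx_{t+1}^{s+1}-\vx_t^{s+1}=\theta(\vz_{t+1}^{s+1}-\vz_t^{s+1})$ and $\vx_{t+1}^{s+1}-\vz_{t+1}^{s+1}=-\tfrac{1-\theta}{\theta}(\vx_{t+1}^{s+1}-\widetilde{\vx}^{s})$, both from~(\ref{x-update}), turn the resulting inner products into squared norms of $\vx_{t+1}^{s+1}-\vx_t^{s+1}$ and $\vx_{t+1}^{s+1}-\widetilde{\vx}^{s}$. Taking $\mathbb{E}_t^{s+1}[\cdot]$, invoking unbiasedness $\mathbb{E}_t^{s+1}[\hat{\nabla} f(\vx_t^{s+1})]=\nabla f(\vx_t^{s+1})$ together with the standard SVRG variance estimate $\mathbb{E}_t^{s+1}\|\hat{\nabla} f(\vx_t^{s+1})-\nabla f(\vx_t^{s+1})\|^2\le L^2\|\vx_t^{s+1}-\widetilde{\vx}^{s}\|^2$ (a consequence of Assumption~\ref{assum grad f}), and using $Q\succ I$ so that $\tfrac{\theta}{2\eta}\|\vz_{t+1}^{s+1}-\vz_t^{s+1}\|_Q^2\ge\tfrac{\phi_{\min}}{2\eta\theta}\|\vx_{t+1}^{s+1}-\vx_t^{s+1}\|^2$, one is left with a dominant negative term in $\mathbb{E}_t^{s+1}\|\vx_{t+1}^{s+1}-\vx_t^{s+1}\|^2$ plus positive remainders that, after a Young's-inequality splitting with parameters $l_1,l_2,l_3$, are controlled by $\|\vx_t^{s+1}-\widetilde{\vx}^{s}\|^2$, $\|\vx_{t-1}^{s+1}-\widetilde{\vx}^{s}\|^2$, $\|\vx_t^{s+1}-\vx_{t-1}^{s+1}\|^2$ and $\mathbb{E}_t^{s+1}\|\vx_{t+1}^{s+1}-\widetilde{\vx}^{s}\|^2$.

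For $\Delta_\lambda$ the dual step gives $\Delta_\lambda=-\langle\vlambda_{t+1}^{s+1}-\vlambda_t^{s+1},\,A\vx_{t+1}^{s+1}+B\vy_{t+1}^{s+1}-\vc\rangle$; because update~(\ref{dual-update}) is written with $\vz_{t+1}^{s+1}$ rather than $\vx_{t+1}^{s+1}$, I would substitute $A\vx_{t+1}^{s+1}+B\vy_{t+1}^{s+1}-\vc=A(\vx_{t+1}^{s+1}-\vz_{t+1}^{s+1})-\tfrac{1}{\rho}(\vlambda_{t+1}^{s+1}-\vlambda_t^{s+1})=-\tfrac{1-\theta}{\theta}A(\vx_{t+1}^{s+1}-\widetilde{\vx}^{s})-\tfrac{1}{\rho}(\vlambda_{t+1}^{s+1}-\vlambda_t^{s+1})$, which produces the harmful term $+\tfrac{1}{\rho}\|\vlambda_{t+1}^{s+1}-\vlambda_t^{s+1}\|^2$ together with a cross term $\tfrac{1-\theta}{\theta}\langle\vlambda_{t+1}^{s+1}-\vlambda_t^{s+1},A(\vx_{t+1}^{s+1}-\widetilde{\vx}^{s})\rangle$. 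Taking $\mathbb{E}_t^{s+1}[\cdot]$ and bounding $\mathbb{E}_t^{s+1}\|\vlambda_{t+1}^{s+1}-\vlambda_t^{s+1}\|^2$ by Lemma~\ref{lemma of lambda} replaces those terms by a combination of $\mathbb{E}_t^{s+1}\|\vx_{t+1}^{s+1}-\vx_t^{s+1}\|^2$, $\|\vx_t^{s+1}-\vx_{t-1}^{s+1}\|^2$, $\|\vx_t^{s+1}-\widetilde{\vx}^{s}\|^2$ and $\|\vx_{t-1}^{s+1}-\widetilde{\vx}^{s}\|^2$.

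Finally I would assemble the three estimates, add $\beta_5\big(\mathbb{E}_t^{s+1}\|\vx_{t+1}^{s+1}-\vx_t^{s+1}\|^2-\|\vx_t^{s+1}-\vx_{t-1}^{s+1}\|^2\big)$ and the difference of the $h$-terms of $\Psi_{t+1}^{s+1}$ and $\Psi_t^{s+1}$, and collect the coefficient of each distinct squared distance. The coefficient of $\mathbb{E}_t^{s+1}\|\vx_{t+1}^{s+1}-\vx_t^{s+1}\|^2$, which I would \emph{define} to be $-\Gamma_t^{s+1}$, has the form $-\tfrac{\phi_{\min}}{2\eta\theta}+\tfrac{L}{2}+\tfrac{5\phi_{\max}^2}{\rho\,\sigma^{A}_{min}\eta^2}+\beta_5+(\text{Young remainders})$, which becomes negative once $\eta$ is small and $\rho$ large enough — exactly the hypothesis $\Gamma_t^{s+1}>0$; the coefficient of $\|\vx_t^{s+1}-\vx_{t-1}^{s+1}\|^2$ is made nonpositive by choosing $\beta_5$ large enough to dominate the $\tfrac{5(L^2\eta^2+\phi_{\max}^2)}{\rho\,\sigma^{A}_{min}\eta^2}$-type contribution from Lemma~\ref{lemma of lambda}; and equating the coefficients of $\|\vx_{t+1}^{s+1}-\widetilde{\vx}^{s}\|^2$, $\|\vx_t^{s+1}-\widetilde{\vx}^{s}\|^2$ and $\|\vx_{t-1}^{s+1}-\widetilde{\vx}^{s}\|^2$ with the corresponding parts of $\Psi_{t+1}^{s+1}$ gives a backward recursion for $\{h_t^{s+1}\}$ (started from a suitable terminal value at $t=m$) that admits a positive solution on $[m]\times[S]$, the $\|\vx_{t-1}^{s+1}-\widetilde{\vx}^{s}\|^2$-matching producing precisely the stated coefficient $-[h_{t+1}^{s+1}+(1+\alpha_1)(h_{t+1}^{s+1}+\beta_1)]$. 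The step I expect to be the main obstacle is $\Delta_\lambda$: the dual update is not a descent step, it injects $+\tfrac{1}{\rho}\|\vlambda_{t+1}^{s+1}-\vlambda_t^{s+1}\|^2$, and Lemma~\ref{lemma of lambda} re-expresses that in terms of $\mathbb{E}_t^{s+1}\|\vx_{t+1}^{s+1}-\vx_t^{s+1}\|^2$ itself; the delicate bookkeeping is to verify that the proximal contraction $\tfrac{\phi_{\min}}{2\eta\theta}$ dominates $\tfrac{5\phi_{\max}^2}{\rho\,\sigma^{A}_{min}\eta^2}$, that the leakage into distances from $\widetilde{\vx}^{s}$ stays controllable by a monotone sequence $\{h_t^{s+1}\}$ over a length-$m$ epoch, and that the momentum ratio $\tfrac{1-\theta}{\theta}$ appearing in the cross terms does not spoil these estimates — all of which are the constraints packaged into ``$\Gamma_t^{s+1}>0$'' and the existence of $\{h_t^{s+1}\}$ in the statement.
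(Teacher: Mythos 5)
Your proposal follows essentially the same route as the paper's proof: the $\vy$-block descent, the $\vz$-optimality condition combined with $L$-smoothness and the momentum identities $\vx_{t+1}-\vx_t=\theta(\vz_{t+1}-\vz_t)$, $\vz_{t+1}-\vx_{t+1}=\frac{1-\theta}{\theta}(\vx_{t+1}-\widetilde{\vx})$, the control of the dual ascent term $\frac{1}{\rho}\|\vlambda_{t+1}-\vlambda_t\|^2$ via Lemma~\ref{lemma of lambda}, the Young splittings with parameters $l_1,l_2,\alpha_1$, and the backward recursion defining $\{h_t^s\}$ and $\Gamma_t^s$ are all exactly the paper's steps. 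The only piece you pass over quickly is the separate epoch-boundary estimate ($t=m$, where $\vx_0^{s+1}=\widetilde{\vx}^s$ makes the stochastic gradient exact and the reference point of $\Psi$ switches from $\widetilde{\vx}^{s-1}$ to $\widetilde{\vx}^{s}$), which the paper treats as a distinct second part of the proof.
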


\begin{proof}
The specific expression of the tuple $(\beta_1,\beta_2,\beta_3,\beta_4,\beta_5,\beta_6)$ and the sequences $\{(h_t^s)_{t=1}^m\}_{s=1}^S$, $\{(\Gamma^{s}_t)_{t=1}^m\}_{s=1}^S$ are provided in Appendix \ref{app:lemma 2}.
\end{proof}

\begin{remark}\label{remark 4}
As we see, there are some crucial parameters in Lemma \ref{detail lemma of h}. In this remark, we further clarify how to {choose these parameters $\theta,\eta,\rho$.} One can observe that there exists a parameter $\Gamma^s_t$ in the right side of the inequality  \eqref{equ of decrease}. If $\Gamma^s_t$ promotes larger, then the sequence $\{(\Psi^{s}_{t})_{t=1}^m\}_{s=1}^S$ decreases faster.
\begin{itemize}
\item Firstly, we rewritten $\Gamma^s_t$ defined in \eqref{def of gamma} as follows:
\begin{align}
\Gamma^s_{t}=&F(\theta)+H(\eta) -\frac{L}{2}  - \left(\frac{1}{\rho}+\frac{1}{2 l_{1}}\right)\frac{5L^2}{\sigma^{A}_{min}}-h_{t+1}^{s}(1+\frac{1}{\alpha_1}), \label{three part}
\end{align}
where
{
\begin{align}
F(\theta) = &\frac{\rho\sigma^{A}_{min}(l_{2}-(1-\theta))}{2\theta^2 l_{2}} - \left(\frac{\rho+l_{1}}{2}\right) \left(\frac{1-\theta}{\theta}\right)^{2}\sigma^{A}_{max}(1+\frac{1}{\alpha_1}), \label{fff} \\
H(\eta)= & \frac{\phi_{\min}}{\eta} - \frac{10\phi^2_{\max}  }{\sigma^{A}_{min}\eta^2}(\frac{1}{\rho} + \frac{1}{2l_1}),\label{hhh}
\end{align}}
\item Secondly, in order to obtain the maximum value of $\Gamma^s_t$, then the optimal parameters $\theta^*$ and $\eta^*$ can be obtained
{
\begin{align}\label{opt theta1}
\theta^* &= \frac{2 l_2 (\rho + l_1)(1+\alpha_1)\sigma^{A}_{max}- 2\alpha_1 \rho \sigma^{A}_{min}(l_2+1)}{2 l_2 (\rho + l_1)(1+\alpha_1)\sigma^{A}_{max} + \alpha_1 \rho \sigma^{A}_{min}(l_2+1)}\\
\frac{1}{\eta^*} &=  \frac{\phi_{\min } \sigma^{A}_{min} }{20 \phi_{\max }^{2}\left(\frac{1}{\rho}+\frac{1}{2 l_{1}}\right)}, \label{opt eta1}
\end{align}}
\item Finally, from \eqref{opt theta1} and \eqref{opt eta1}, we can see that the optimal $\theta^*$ and $\eta^*$ become larger, the parameter $\rho$ should be adjusted to smaller.
\end{itemize}

\begin{remark}\label{remark 5}
Now we show that there also exit a lower bound for  the dual step size $\rho$.
Taking the optimal value of $\eta^*$ in (\ref{opt eta1}) into $\Gamma^s_{t}$ in (\ref{three part}), we study the requirement of  $\rho$ with the inequality $\Gamma^s_{t}>0$, which is simplified as follows:
{	\setlength{\abovedisplayskip}{10pt}
\setlength{\belowdisplayskip}{10pt}
\begin{align}
0< &\frac{\phi_{\min }^{2} \sigma^{A}_{min} }{40 \phi_{\max }^{2}\left(\frac{1}{\rho}+\frac{1}{2 l_{1}}\right)} - \frac{L}{2}  + \frac{\rho\sigma^{A}_{min}(l_{2}-(1-\theta))}{2\theta^2 l_{2}} \nonumber\\
&- \left(\frac{1}{\rho}+\frac{1}{2 l_{1}}\right)\frac{5L^2}{\sigma^{A}_{min}}  -(h_{t+1}^{s}+\beta_1)(1+\frac{1}{\alpha_1}). \nonumber
\end{align}}
{Considering} the setting of $\sigma^{A}_{min} \rho >1$, the parameter $\rho$ should satisfy the following inequality
\begin{align}
&10 L^{2}+\left(h_{t+1}^{s}+\beta_{1}\right)\left(1+\frac{1}{\alpha_{1}}\right)\nonumber\\
< & \frac{\phi_{\min }^{2} \sigma_{\min }^{A} \theta^{2} l_{2}+40 \sigma_{\min }^{A}\left[l_{2}-(1-\theta)\right] \phi_{\max }^{2}}{80 \phi_{\max }^{2} \theta^{2} l_{2}} \rho,  \nonumber
\end{align}
which indicates
{	\setlength{\abovedisplayskip}{10pt}
\setlength{\belowdisplayskip}{10pt}
\begin{equation} \label{inequ rho}
\rho>\frac{80 \phi_{\max }^{2} \theta^{2} l_{2}\left[10 L^{2}+\left(h_{t+1}^{s}+\beta_{1}\right)\left(1+\frac{1}{\alpha_{1}}\right)\right]}{\phi_{\min }^{2} \sigma_{\min }^{A} \theta^{2} l_{2}+40 \sigma_{\min }^{A}\left[l_{2}-(1-\theta)\right] \phi_{\max }^{2}} .
\end{equation}}
\end{remark}

%
%
%
%
%
%
\end{remark}	

Based on the above important descending Lemma \ref{detail lemma of h}, we next
analyze the convergence and iteration complexity of the ASVRG-ADMM  with the aid of a simple notation:
{		
\begin{align}\label{equ 21}
R^{s}_t := &\mathbb{E}\|\vx^{s}_{t}-\widetilde{\vx}^{s-1}\|^2 + \mathbb{E}\|\vx^{s}_{t-1}-\widetilde{\vx}^{s-1}\|^2  + \mathbb{E}\|\vx^{s}_{t+1}-\vx^{s}_t\|^2 + \mathbb{E}\|\vx^{s}_{t}-\vx^{s}_{t-1}\|^2.
\end{align}}
We also denote
{	\setlength{\abovedisplayskip}{-2pt}
\setlength{\belowdisplayskip}{-2pt}
\[
(\hat{t},\hat{s}) =
\mathop{\arg\min}_{1 \leq t\leq m,\ 1 \leq s\leq S}R^{s}_t.
\]}

\begin{theorem}\label{thm 1}
Assuming that Assumptions \ref{assum grad f}-\ref{assum4} hold, let $\{(\vx^{s}_t,\vy^{s}_t,\vlambda^{s}_t)_{t=1}^m\}_{s=1}^S$ be the sequence generated by Algorithm \ref{alg1}, using the identical notations as in Lemma \ref{detail lemma of h}. For $\tau = \min\big\{\gamma, \omega \big\}>0$, where $\gamma=\min\limits_{t,s} \Gamma^s_t$ and $\omega = \frac{5L^2}{\sigma^{A}_{min} \rho}$, the following can be derived:	
{	\setlength{\abovedisplayskip}{2pt}
\setlength{\belowdisplayskip}{2pt}
\begin{align}
R^{\hat{s}}_{\hat{t}} = \min_{s,t} R^{s}_t \leq \frac{1}{\tau T} \mathbb{E}(\Psi^{1}_{1} - \Psi^*),
\end{align}
where $\Psi^*$ denotes a lower bound of the sequence $\{(\Psi^{s}_{t})_{t=1}^m\}_{s=1}^S$.}


\end{theorem}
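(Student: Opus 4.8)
The plan is to derive Theorem~\ref{thm 1} as a straightforward telescoping consequence of the sufficient-decrease inequality \eqref{equ of decrease} in Lemma~\ref{detail lemma of h}, combined with the dual-difference bound of Lemma~\ref{lemma of lambda}. First I would rewrite \eqref{equ of decrease} by dropping the manifestly nonpositive term involving $\|\vx^{s+1}_{t-1}-\widetilde{\vx}^s\|^2$ (its coefficient is negative since $h^{s+1}_{t+1}$, $\beta_1$, $\alpha_1$ are positive), obtaining the cleaner bound
\begin{align}
\mathbb{E}_{t}^{s+1}[\Psi^{s+1}_{t+1}] \leq \Psi^{s+1}_{t}-\Gamma^{s+1}_t\, \mathbb{E}_{t}^{s+1}[\|\vx^{s+1}_{t+1}-\vx^{s+1}_t\|^2]. \nonumber
\end{align}
The key observation is that $\Gamma^{s+1}_t \geq \gamma$ by definition of $\gamma = \min_{t,s}\Gamma^s_t$, so taking full expectations and using the tower property yields $\mathbb{E}[\Psi^{s+1}_{t+1}] \leq \mathbb{E}[\Psi^{s+1}_{t}] - \gamma\,\mathbb{E}\|\vx^{s+1}_{t+1}-\vx^{s+1}_t\|^2$. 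Summing this telescoping relation over all inner iterations $t=0,\ldots,m-1$ and all epochs $s=0,\ldots,S-1$ (so over $T = mS$ iterations total, accounting for the resets $\vx_0^{s+1}=\vx_m^s$ etc. that make $\Psi$ continuous across epoch boundaries) gives $\sum \gamma\,\mathbb{E}\|\vx^{s+1}_{t+1}-\vx^{s+1}_t\|^2 \leq \mathbb{E}(\Psi^1_1 - \Psi^*)$.

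Next I would bring in the remaining three terms appearing in $R^s_t$ — namely $\mathbb{E}\|\vx^s_t - \widetilde{\vx}^{s-1}\|^2$, $\mathbb{E}\|\vx^s_{t-1} - \widetilde{\vx}^{s-1}\|^2$, and $\mathbb{E}\|\vx^s_t - \vx^s_{t-1}\|^2$. These must be controlled by the same $\|\vx^{s+1}_{t+1}-\vx^{s+1}_t\|^2$-type quantities; the natural route is Lemma~\ref{lemma of lambda}, whose right-hand side is exactly a positive combination of these four displacement terms, while its left-hand side $\mathbb{E}^{s+1}_t\|\vlambda^{s+1}_{t+1}-\vlambda^{s+1}_t\|^2$ can in turn be absorbed. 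I expect the definition $\omega = \frac{5L^2}{\sigma^A_{\min}\rho}$ to be precisely the constant that makes this bookkeeping work: one shows that $\omega$ times the sum of the three "extra" terms is dominated by what the potential decrease already controls (this is where the assumption $\sigma^A_{\min}\rho > 1$ from Remark~\ref{remark 5} enters). Setting $\tau = \min\{\gamma,\omega\}$ then makes $\tau\, R^s_t$ a lower bound, term by term, for the per-iteration decrease of $\Psi$, so that $\sum_{s,t} \tau\, R^s_t \leq \mathbb{E}(\Psi^1_1 - \Psi^*)$.

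Finally, since there are $T$ iterations in the double sum and $R^{\hat s}_{\hat t} = \min_{s,t} R^s_t$ is the smallest among them, I would conclude $\tau T\, R^{\hat s}_{\hat t} \leq \sum_{s,t}\tau R^s_t \leq \mathbb{E}(\Psi^1_1 - \Psi^*)$, i.e. $R^{\hat s}_{\hat t} \leq \frac{1}{\tau T}\mathbb{E}(\Psi^1_1 - \Psi^*)$, which is the claimed $O(1/T)$ rate. The main obstacle I anticipate is not the telescoping itself but the careful aggregation in the middle step: making sure that when Lemma~\ref{lemma of lambda}'s bound is inserted, the coefficients line up so that \emph{all four} terms of $R^s_t$ — including cross-epoch terms like $\|\vx^s_t - \widetilde{\vx}^{s-1}\|^2$, which involve the snapshot from the previous epoch and hence need the epoch-boundary continuity of $\Psi$ — are simultaneously dominated, with no leftover positive terms. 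This requires tracking how the $h^s_t$ weights in the definition \eqref{def of psi} of $\Psi$ interact with the displacement terms, and verifying that the single constant $\tau = \min\{\gamma,\omega\}$ genuinely suffices rather than needing a more delicate weighting; the parameter choices guaranteeing $\Gamma^s_t > 0$ (and the lower bound on $\rho$ in \eqref{inequ rho}) are what ultimately make this possible.
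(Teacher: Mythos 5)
Your overall skeleton (telescope the descent inequality, lower-bound the accumulated decrease by $\tau \sum_{s,t} R^s_t$, then pass to the minimum over the $T=mS$ iterations) matches the paper's proof, and your final step is exactly right. But the middle of your argument has a genuine gap, and it is located precisely at the step you flag as the anticipated obstacle. You propose to \emph{drop} the term $-\left[h^{s+1}_{t+1}+(1+\alpha_1)(h^{s+1}_{t+1}+\beta_1)\right]\|\vx^{s+1}_{t-1}-\widetilde{\vx}^{s}\|^2$ from \eqref{equ of decrease} and then recover control of the snapshot-deviation terms $\mathbb{E}\|\vx^s_t-\widetilde{\vx}^{s-1}\|^2$ and $\mathbb{E}\|\vx^s_{t-1}-\widetilde{\vx}^{s-1}\|^2$ in $R^s_t$ via Lemma \ref{lemma of lambda}. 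That cannot work: Lemma \ref{lemma of lambda} is an \emph{upper} bound on $\mathbb{E}^{s+1}_t\|\vlambda^{s+1}_{t+1}-\vlambda^{s+1}_t\|^2$ in terms of the displacement and snapshot terms, i.e.\ the inequality points in the wrong direction — it lets you absorb the dual difference into those quantities, not the reverse. Once you discard the negative snapshot term from \eqref{equ of decrease}, nothing in your argument produces a negative multiple of $\|\vx_{t}-\widetilde{\vx}\|^2$ on the right-hand side of the telescoped inequality, so two of the four terms of $R^s_t$ are uncontrolled and the bound $\tau R^s_t \le \mathbb{E}[\Psi^s_t]-\mathbb{E}[\Psi^s_{t+1}]$ does not follow. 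Your appeal to $\sigma^A_{\min}\rho>1$ from Remark \ref{remark 5} is likewise not where $\omega$ comes from.

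The paper's proof keeps that negative term: its coefficient $h^{s+1}_{t+1}+(1+\alpha_1)(h^{s+1}_{t+1}+\beta_1)$ is what controls $\|\vx^{s+1}_{t-1}-\widetilde{\vx}^{s}\|^2$ at inner iterations, and the epoch-boundary descent inequality (the analogue of \eqref{equ of decrease} at $t=m$, namely $\mathbb{E}_0[\Psi^{s+1}_1]\le \Psi^s_m-\Gamma^s_m\mathbb{E}_0\|\vx^{s+1}_1-\vx^{s+1}_0\|^2-\frac{5L^2}{\sigma^A_{\min}\rho}\|\vx^s_m-\widetilde{\vx}^{s-1}\|^2$) carries the coefficient $\frac{5L^2}{\sigma^A_{\min}\rho}$, which is exactly why $\omega$ is defined as the minimum of these two coefficients and equals $\frac{5L^2}{\sigma^A_{\min}\rho}$. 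Summing both families of inequalities over $t$ and $s$ then yields $\mathbb{E}\Psi^S_m-\mathbb{E}\Psi^1_1\le-\gamma\sum_{s,t}\mathbb{E}\|\vx^s_t-\vx^s_{t-1}\|^2-\omega\sum_{s,t}\mathbb{E}\|\vx^s_{t-1}-\widetilde{\vx}^{s-1}\|^2$, from which all four terms of $R^s_t$ are covered (consecutive-difference and snapshot terms at adjacent indices coincide inside the double sum), and the conclusion follows as you describe. To repair your proposal, you should retain the snapshot term in \eqref{equ of decrease}, note that its coefficient is at least $\omega$, and also invoke the epoch-boundary inequality (S.42) explicitly; Lemma \ref{lemma of lambda} plays no role in this theorem's proof beyond having already been used inside Lemma \ref{detail lemma of h}.
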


\begin{proof}
See	 Appendix \ref{app:theorem 1}.
\end{proof}

Theorem \ref{thm 1}  shows that Algorithm  \ref{alg1} converges with the worst-case $\mathcal{O}(1/T)$ convergence rate, where $T=mS$.

\begin{lemma}[The Finite Length Property] \label{thm 2}
{Let the sequence $\left\{\vw_{t}=\left(\vx_{t}, \vy_{t}, \vlambda_{t}\right)\right\}$ be generated by  Algorithm \ref{alg1} under  Assumptions \ref{assum grad f}-\ref{Lip sub path}   and  $\Psi_{t}$ {  has the  KL} property at the stationary point $\left(\vx^{*}, \vy^{*}, \vlambda^{*}\right)$ with
{KL} exponent $\Tilde{\mu}\in [0; 1)$ (see the Definition \ref{KŁ1}). Suppose that
$f$ and $g$ are semi-algebraic functions satisfying the {KL} inequality, then $\left\{\vw_{t}\right\}$ has a finite length, that is,}
$$
\sum_{t=0}^{+\infty}\left\|\vw_{t+1}-\vw_{t}\right\|<+\infty \quad \text{a.s}.
$$
{Moreover,}  $\left\{\vw_{t}\right\}$ converges a.s. to a critical point of $\mathcal{L}_{\rho}(\cdot)$ .
\end{lemma}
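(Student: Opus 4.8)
\textbf{Proof proposal for Lemma \ref{thm 2} (the finite length property).}

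The plan is to follow the now-standard KL-based argument (as in \cite{attouch2010proximal,bolte2014proximal,wang2019global}) but adapted to the stochastic, momentum-accelerated setting, using the potential energy function $\Psi_t^s$ in place of the augmented Lagrangian. First I would fix a sample path in the almost-sure event on which the conclusions of Lemma \ref{detail lemma of h} and Theorem \ref{thm 1} hold and on which the stochastic gradient estimator (\ref{f-grad-update}) eventually coincides with the true gradient along the tail of the sequence (this is where the variance reduction structure is essential: since $R_t^s \to 0$ a.s.\ by Theorem \ref{thm 1}, the iterate increments, the distances $\|\vx_t^{s+1}-\widetilde{\vx}^s\|$, and hence the gradient estimation error $\hat\nabla f(\vx_t^{s+1}) - \nabla f(\vx_t^{s+1})$ all vanish a.s., so the dynamics become asymptotically deterministic). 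On this event, $\{\Psi_t^s\}$ is nonincreasing (Lemma \ref{detail lemma of h}) and bounded below (Assumptions \ref{assum grad f}--\ref{assum2} plus Assumptions \ref{assum4}--\ref{Lip sub path}, via Lemma \ref{lemma of lambda} controlling the dual variable by the primal increments), hence converges to some limit $\Psi^\infty$.

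The core of the argument has three ingredients, which I would assemble in the usual order. \emph{(i) Sufficient decrease:} from (\ref{equ of decrease}) there is a constant $a>0$ with $\Psi_t^{s+1} - \Psi_{t+1}^{s+1} \geq a\,R_t^{s+1}$ along the path, so in particular the squared increments $\|\vw_{t+1}-\vw_t\|^2$ are summably dominated by telescoping differences of $\Psi$. \emph{(ii) A relative-error / subgradient bound:} I would show $\operatorname{dist}(0,\partial \Psi_t^s) \leq b\,\big(\|\vx_t^s - \vx_{t-1}^s\| + \|\vx_t^s - \widetilde{\vx}^{s-1}\| + \|\vx_{t-1}^s-\widetilde{\vx}^{s-1}\| + \cdots\big) =: b\,\sqrt{\widehat R_t^s}$ for a suitable constant $b$; this uses the optimality conditions for the $\vy$- and $\vz$-subproblems (\ref{y-update})--(\ref{z-update}), the dual update (\ref{dual-update}), the Lipschitz bound of Assumption \ref{assum grad f}, and the definition (\ref{def of psi}) of $\Psi$ (the $\beta_5$ and $h_t^s$ penalty terms contribute $O(\text{increments})$ to the subdifferential). \emph{(iii) The KL inequality} applied at the (a.s.) cluster point $(\vx^*,\vy^*,\vlambda^*)\in\operatorname{crit}\mathcal L_\rho$, with concave desingularizing function $\varphi$: combining $\varphi'(\Psi_t^s-\Psi^\infty)\operatorname{dist}(0,\partial\Psi_t^s)\geq 1$ with the concavity inequality $\varphi(\Psi_t^s - \Psi^\infty) - \varphi(\Psi_{t+1}^s-\Psi^\infty) \geq \varphi'(\Psi_t^s-\Psi^\infty)(\Psi_t^s - \Psi_{t+1}^s)$ and the two bounds above, one gets, after an AM--GM step, an inequality of the form
\begin{equation}
\|\vw_{t+1}-\vw_t\| \leq \tfrac{1}{2}\big(\text{earlier increments}\big) + C\big(\varphi(\Psi_t-\Psi^\infty) - \varphi(\Psi_{t+1}-\Psi^\infty)\big). \nonumber
\end{equation}
Summing over $t$, the telescoping of $\varphi$ and the contraction of the increment terms yield $\sum_t \|\vw_{t+1}-\vw_t\| < +\infty$ on the chosen full-measure event, hence a.s.; a Cauchy sequence argument then gives a.s.\ convergence of $\{\vw_t\}$, and since the gradient estimation error vanishes the limit satisfies the KKT system of Definition \ref{kkt}, i.e.\ lies in $\operatorname{crit}\mathcal L_\rho$.

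The main obstacle I anticipate is making the passage from the deterministic KL machinery to an almost-sure statement fully rigorous: the KL inequality needs a fixed neighborhood $U$ of the limit and a fixed threshold window $[\Psi^* < \Psi < \Psi^* + \eta]$, but the limit point, the neighborhood, and the "entry time" into $U$ are all random, so one must argue (pathwise, on the event where $R_t^s\to 0$ and $\Psi_t^s$ converges) that the tail of the sequence stays in $U$ with $\Psi_t^s$ in the valid window — a standard but delicate bootstrapping step that also has to absorb the momentum terms $\theta(\vz_{t+1}^{s+1}-\widetilde{\vx}^s)$ and the epoch-boundary resets $\widetilde{\vx}^{s+1}=\vx_m^{s+1}$ into the "earlier increments" bookkeeping without breaking the summability. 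A secondary technical point is relabeling the doubly-indexed $(s,t)$ sequence as a single sequence $\vw_k$ so that the telescoping and the KL neighborhood argument read cleanly; I would do this relabeling up front.
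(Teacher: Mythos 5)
Your three-ingredient skeleton (sufficient decrease of $\Psi$, a subgradient bound $\operatorname{dist}(0,\partial\Psi_t)\lesssim$ iterate increments, the KL inequality plus concavity of $\varphi$ and an AM--GM step, then telescoping) is exactly the structure of the paper's argument, and your identification of the random limit point and random entry time as the delicate point matches how the paper handles it (via the a.s.\ convergence $dist(\vw_t,S(\vw_0))\to 0$ and $\Psi_t\to\mathcal{L}_\rho(\vw^*)$, which produce finite random times after which the KL inequality is in force on the compact limit set). The paper also first disposes of the degenerate case where $\Psi_{t_0}=\mathcal{L}_\rho(\vw^*)$ at some finite random time, in which the iterates freeze; you omit this, but it is minor.

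The genuine gap is in your opening move. Lemma \ref{detail lemma of h} gives $\mathbb{E}_t^{s+1}[\Psi^{s+1}_{t+1}]\le\Psi^{s+1}_t-\Gamma^{s+1}_t\,\mathbb{E}_t^{s+1}\|\vx^{s+1}_{t+1}-\vx^{s+1}_t\|^2-\cdots$, i.e.\ a decrease in \emph{conditional expectation} only; there is no almost-sure event on which $\{\Psi_t\}$ is pathwise nonincreasing, so you cannot ``fix a sample path on which the conclusions of Lemma \ref{detail lemma of h} hold'' and then treat $\Psi_t$ as a monotone deterministic sequence. Pathwise, $\Psi_t$ may oscillate, and then both the KL window condition $\Psi^*<\Psi_t<\Psi^*+\eta$ and the pairing of $\varphi'(\Psi_t-\Psi^*)(\Psi_t-\Psi_{t+1})$ with a positive decrease can fail at individual iterations. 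The paper circumvents this by staying in expectation: it obtains a.s.\ square-summability of the increments from the supermartingale convergence theorem, applies the concavity of $\varphi$ to $\Psi_t-\mathbb{E}_t\Psi_{t+1}$, uses the conditional Jensen inequality $\mathbb{E}[\varphi(\mathbb{E}_t\Psi_{t+1}-\Psi^*)]\ge\mathbb{E}[\varphi(\Psi_{t+1}-\Psi^*)]$ to restore a telescoping sum of full expectations, and concludes $\sum_t\mathbb{E}\|\vw_{t+1}-\vw_t\|<\infty$, from which the a.s.\ finite-length statement follows. Your proposal needs this (or an equivalent Robbins--Siegmund-type device) to close; relatedly, the claim that the stochastic gradient ``eventually coincides with the true gradient'' is too strong --- the variance bound (\ref{bjc deta}) only makes the estimation error vanish asymptotically --- though that remark is not load-bearing for the rest of your argument.
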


\begin{proof}
See	 Appendix \ref{app:thm 2}.
\end{proof}

Now, combined with the KL property defined in Definition \ref{KŁ1} (see \cite{attouch2010proximal}), we can obtain the linear convergence rate of ASVRG-ADMM.

\begin{theorem}[Linear Convergence]\label{thm 3}
For notational simplicity, we also omit the upper script $s$. The sequence $\left\{\vw_{t}=\left(\vx_{t}, \vy_{t}, \vlambda_{t}\right)\right\}$ is  generated by Algorithm \ref{alg1}, and it converges to $\left\{\vw^{*}=\left(\vx^{*}, \vy^{*}, \vlambda^{*}\right)\right\}$. Additionally, we define $\varphi(s)=\tilde{c} s^{1-\Tilde{\mu}}, \Tilde{\mu} \in[0,1), \tilde{c}>0$.  Assume that $\Psi_{t}$ defined in (\ref{def of psi})  has the  {KL} property at the stationary point $\left(\vx^{*}, \vy^{*}, \vlambda^{*}\right)$ with an exponent $\Tilde{\mu}$. Then, we draw the following estimations under Assumptions \ref{assum grad f}-\ref{Lip sub path}:

\begin{itemize}
\item [i)] If $\Tilde{\mu} =0$, then the sequence $\left\{\vw_{t}=\left(\vx_{t}, \vy_{t}, \vlambda_{t}\right)\right\}$ converges a.s. in a finite number of steps.
\item [ii)] If $\Tilde{\mu} \in\left(0, \frac{1}{2}\right]$, then there exists $\hat{c}>0$ and $\xi_{3} \in[0,1)$, such that
{	
$$
\left\|\left(\vx_{t}, \vy_{t}, \vlambda_{t}\right)-\left(\vx^{*}, \vy^{*}, \vlambda^{*}\right)\right\| \leq \hat{c}  \xi_{3}^{t}  \quad \text{a.s}.
$$}
\item [iii)] If $\Tilde{\mu} \in\left(\frac{1}{2}, 1\right)$, then there exists $\hat{c}>0$, such that
{	
$$
\left\|\left(\vx_{t}, \vy_{t}, \vlambda_{t}\right)-\left(\vx^{*}, \vy^{*}, \vlambda^{*}\right)\right\| \leq \hat{c} t^{(\Tilde{\mu}-1) /(2 \Tilde{\mu}-1)}  \quad \text{a.s}.
$$}
\end{itemize}
\end{theorem}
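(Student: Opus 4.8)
\textbf{Proof proposal for Theorem \ref{thm 3}.}
The plan is to run a pathwise Kurdyka--{\L}ojasiewicz argument on the almost sure event on which $\{\vw_t\}$ converges to a critical point $\vw^\ast$ of $\mathcal{L}_\rho$; the existence of this event of probability one is exactly the content of Lemma \ref{thm 2}. On this event set $r_t := \Psi_t - \Psi^\ast \ge 0$, which tends to $0$, and let $\Delta_t := \sum_{k\ge t}\|\vw_{k+1}-\vw_k\|$ denote the (finite, by Lemma \ref{thm 2}) tail length, so that $\|\vw_t-\vw^\ast\| \le \Delta_t$. The three ingredients I would assemble are: (i) a \emph{sufficient decrease} estimate $r_t - r_{t+1} \ge \tau\,\|\vw_{t+1}-\vw_t\|^2$, obtained from Lemma \ref{detail lemma of h} and Theorem \ref{thm 1} after upgrading the conditional-expectation decrease stated there to an almost sure pathwise decrease (using that, once $\{\vx_t\}$ converges, the variance-reduced error $\hat{\nabla}f(\vx_t^{s+1})-\nabla f(\vx_t^{s+1})$ from \eqref{f-grad-update} is summable along the trajectory); (ii) a \emph{relative error} bound $\mathrm{dist}(0,\partial\Psi_t) \le b\,(\|\vw_t-\vw_{t-1}\| + \|\vw_{t+1}-\vw_t\|)$, obtained by exhibiting an explicit element of $\partial\Psi_t$ from the optimality conditions of the $\vy$-, $\vz$-, $\vx$- and $\vlambda$-updates of Algorithm \ref{alg1}, Assumption \ref{assum grad f}, and the definition \eqref{def of psi} of $\Psi_t$ (the quadratic terms $\beta_5\|\cdot\|^2$ and $h_t^s(\cdots)$ only add multiples of successive iterate differences); and (iii) the KL inequality of Definition \ref{KŁ1} with $\varphi(s)=\tilde c\, s^{1-\tilde\mu}$, i.e. $\mathrm{dist}(0,\partial\Psi_t) \ge \frac{1}{\tilde c(1-\tilde\mu)}\, r_t^{\tilde\mu}$.

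Combining (ii) and (iii) gives $r_t^{\tilde\mu} \le C\,(\|\vw_t-\vw_{t-1}\| + \|\vw_{t+1}-\vw_t\|)$, hence $\varphi(r_t) \le C'\,(\|\vw_t-\vw_{t-1}\| + \|\vw_{t+1}-\vw_t\|)^{(1-\tilde\mu)/\tilde\mu}$. Concavity of $\varphi$ together with (i) yields $\varphi(r_t)-\varphi(r_{t+1}) \ge \nabla\varphi(r_t)(r_t-r_{t+1}) \ge \frac{\tau\|\vw_{t+1}-\vw_t\|^2}{C\,(\|\vw_t-\vw_{t-1}\|+\|\vw_{t+1}-\vw_t\|)}$; an AM--GM step then produces the master recursion $\|\vw_{t+1}-\vw_t\| \le \tfrac14\big(\|\vw_t-\vw_{t-1}\|+\|\vw_{t+1}-\vw_t\|\big) + C''\big(\varphi(r_t)-\varphi(r_{t+1})\big)$, and summing from $t$ onward bounds $\Delta_t$ by a constant times $\varphi(r_t) + \|\vw_t-\vw_{t-1}\|$. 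Everything now reduces to estimating this right-hand side.

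The case analysis follows the standard {\L}ojasiewicz dichotomy. If $\tilde\mu=0$, the KL inequality forces $\mathrm{dist}(0,\partial\Psi_t)\ge 1/\tilde c$ whenever $\Psi_t>\Psi^\ast$, while (ii) gives $\mathrm{dist}(0,\partial\Psi_t)\to0$; hence $\Psi_t=\Psi^\ast$ for all large $t$, and by (i) $\vw_{t+1}=\vw_t$ eventually, i.e. finite termination. If $\tilde\mu\in(0,\tfrac12]$ then $(1-\tilde\mu)/\tilde\mu\ge1$, so for large $t$ (where the increments are below $1$) $\varphi(r_t)\lesssim\|\vw_t-\vw_{t-1}\|+\|\vw_{t+1}-\vw_t\|=\Delta_{t-1}-\Delta_{t+1}$; plugged into the master recursion this gives $\Delta_t\le\kappa\,\Delta_{t-1}$ for some $\kappa\in(0,1)$ and all large $t$, whence $\|\vw_t-\vw^\ast\|\le\Delta_t\le\hat c\,\xi_3^{\,t}$. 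If $\tilde\mu\in(\tfrac12,1)$ then $(1-\tilde\mu)/\tilde\mu\in(0,1)$ and the recursion is of the type covered by the classical monotone-sequence lemma of Attouch--Bolte, giving $\Delta_t=O\big(t^{-(1-\tilde\mu)/(2\tilde\mu-1)}\big)=O\big(t^{(\tilde\mu-1)/(2\tilde\mu-1)}\big)$, hence the stated polynomial rate. All estimates hold on the probability-one event of Lemma \ref{thm 2}, so the conclusions hold almost surely.

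The main obstacle I anticipate is \emph{upgrading the conditional-expectation estimates to pathwise, almost sure ones}: Lemma \ref{detail lemma of h} is stated for $\mathbb{E}_t^{s+1}[\cdot]$, whereas the KL machinery needs pathwise monotonicity of $\Psi_t$ and a pathwise relative-error bound. Handling this requires showing that, on the convergence event, the stochastic gradient errors are summable (and square-summable) enough that the perturbations in the decrease inequality and in $\partial\Psi_t$ do not break the {\L}ojasiewicz recursion; a Robbins--Siegmund supermartingale argument combined with the variance-reduction identity \eqref{f-grad-update} is the natural tool, but keeping the bookkeeping of the momentum term $\theta(\vz_{t+1}^{s+1}-\widetilde{\vx}^s)$ and of the double (epoch $s$, inner $t$) indexing consistent is delicate. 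A secondary difficulty is producing a clean subgradient of $\Psi_t$: the constraint $A\vz_{t+1}^{s+1}+B\vy_{t+1}^{s+1}=\vc$ enters through the $\vz$- and $\vlambda$-updates while $\Psi_t$ is written in terms of $\vx_t^s$, so one must pass between $\vx$ and $\vz$ via \eqref{x-update} and absorb the mismatch into the successive-difference terms.
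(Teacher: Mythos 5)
Your overall architecture (sufficient decrease, a bound on $\mathrm{dist}(0,\partial\Psi_t)$, the KL inequality, and the three-case \L{}ojasiewicz dichotomy) is the same as the paper's. The genuine gap is in your ingredient (ii): the relative-error bound $\mathrm{dist}(0,\partial\Psi_t)\le b\,(\|\vw_t-\vw_{t-1}\|+\|\vw_{t+1}-\vw_t\|)$ does not hold for this algorithm. Writing out a subgradient of $\Psi_t$ from the optimality conditions necessarily produces terms anchored at the epoch snapshot $\widetilde{\vx}^s$: the momentum step \eqref{x-update} turns $\vz_{t+1}-\vx_{t+1}$ into $\frac{1-\theta}{\theta}(\vx_{t+1}-\widetilde{\vx})$, the variance-reduced gradient error $\hat{\nabla} f(\vx_t)-\nabla f(\vx_t)$ from \eqref{f-grad-update} is controlled only by $\|\vx_t-\widetilde{\vx}\|$, and the $h^s_t(\|\vx_t-\widetilde{\vx}\|^2+\cdots)$ term in \eqref{def of psi} contributes $2h_t(\vx_t-\widetilde{\vx})$ to $\partial\Psi_t$. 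These are snapshot distances, not successive-iterate differences (your parenthetical claim that the $h^s_t(\cdots)$ terms "only add multiples of successive iterate differences" is where the error enters). The paper's Lemma \ref{partial of Lagra} accordingly bounds $\mathrm{dist}(0,\partial\Psi_t)$ by five residuals, three of the form $\|\vx_{\cdot}-\widetilde{\vx}\|$. Since such a term can only be expressed as a sum of up to $m$ earlier increments, your master recursion $\Delta_t\le\kappa\,\Delta_{t-1}$ no longer closes; one must carry a second cumulative quantity $\sum_{i\ge t}\mathbb{E}\|\vx_i-\widetilde{\vx}\|$ alongside $\sum_{i\ge t}\mathbb{E}\|\vx_{i+1}-\vx_i\|$ and establish a coupled linear recursion between the two. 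This is exactly what the paper does with the sequences $a_t,b_t$ and the constants $A,\dots,G$ defining the composite sequence $N_t$ satisfying $N_{t+1}\le N_t/I$ with $I=\min\{C,G\}>1$; the R-linear rate then follows from $A\,\mathbb{E}\|\vx_t-\vx^{*}\|\le A\sum_{i\ge t}\mathbb{E}\|\vx_{i+1}-\vx_i\|\le N_t$, with $\vy_t,\vlambda_t$ controlled afterwards through Lemmas \ref{lemma of lambda} and \ref{lemma 4}.

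A secondary divergence: you propose to upgrade the conditional-expectation estimates to pathwise ones via a Robbins--Siegmund argument, correctly flagging this as delicate. The paper instead takes full expectations of the KL-derived inequality (using conditional Jensen's inequality on the concave $\varphi$) and runs the entire recursion on the deterministic sequences $\mathbb{E}\|\cdot\|$, appending the almost-sure qualifier at the end; the published route thus sidesteps, rather than solves, the pathwise-upgrade problem you raise. Your diagnosis of that difficulty is reasonable, but it is not where your proposed proof breaks --- ingredient (ii) is.
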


\begin{proof}
We also give a sketch of the proof and refer to Appendix \ref{app:thm 3} for more details. This proof is structured by two main steps which contain:
\begin{itemize}
\item First, we define a new sequence and prove it converges a.s. to zero Q-linearly.
\item Second, we conclude that the sequence $\left\{\vw_{t}=\left(\vx_{t}, \vy_{t}, \vlambda_{t}\right)\right\}$ is bounded by the newly defined sequence which converges a.s. Q-linearly, thus the sequence $\left\{\vw_{t}=\left(\vx_{t}, \vy_{t}, \vlambda_{t}\right)\right\}$ converges a.s. to  $\left(\vx^{*}, \vy^{*}, \vlambda^{*}\right)$ R-linearly.
\end{itemize}
\end{proof}

\begin{remark}

We prove in Theorem \ref{thm 3} that our ASVRG-ADMM almost surely achieves R-linear convergence at the rate of $\xi_{3}$ for nonconvex nonsmooth optimization, in accordance with the definition of R-linear convergence presented in Section \ref{sec:intro}.
We construct a potential energy function $\Psi^{s}_{t}$ in (\ref{def of psi}) that satisfies the KL property to ensure convergence when $\Tilde{\mu} \in ( 0,1 / 2]$. This is the first demonstration of linear convergence for a stochastic ADMM incorporating momentum and variance reduction techniques.
\end{remark}

\begin{figure}[h] 
\vspace{-0.45cm}
\centering
\subfigure{\includegraphics[width=0.436\textwidth]{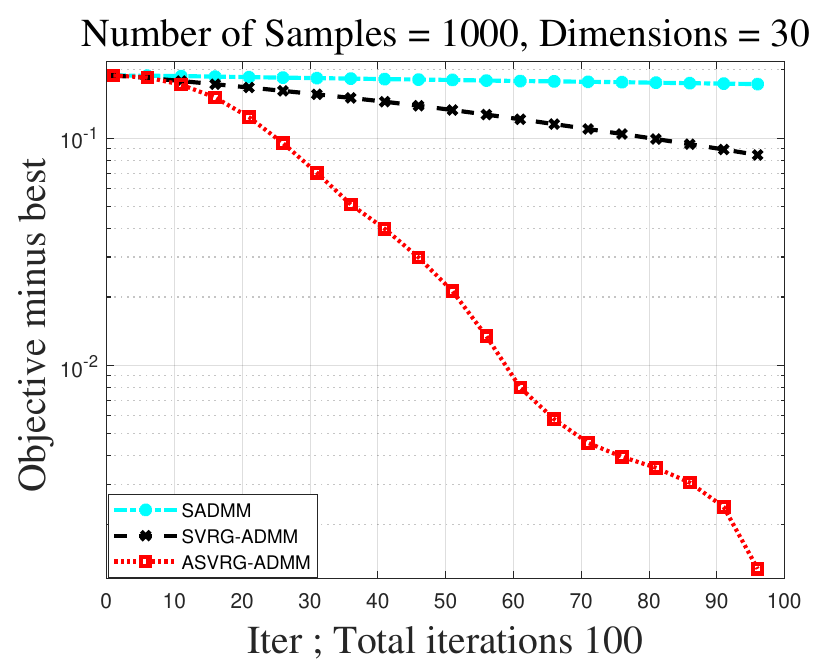}} 
\subfigure{\includegraphics[width=0.432\textwidth]{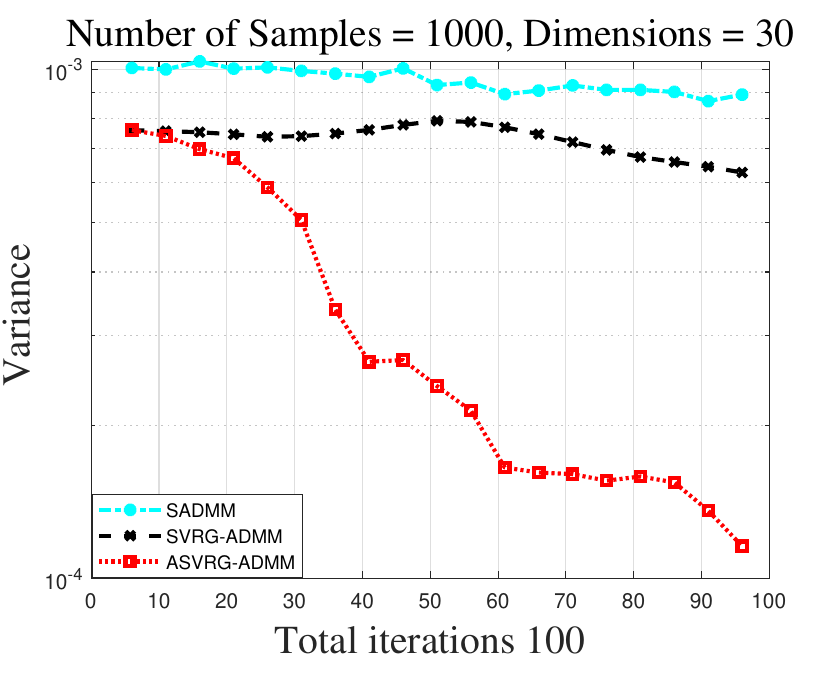}} 
\caption{Comparison of the SADMM, SVRG-ADMM, and
ASVRG-ADMM algorithms for the  nonconvex quadratic problem in the synthetic setting.  The objective values (left) and variances of gradient (right). }
\label{fig_var}
\end{figure}

\section{Numerical Experiments}\label{sec 5}

In this section, we examine the numerical performance of the proposed ASVRG-ADMM algorithm
and present comparison results with the existing methods.
We conducted several different real dataset experiments to validate our theoretical results and show the high efficiency of the implementation of our proposed ASVRG-ADMM algorithmic framework.
We compare ASVRG-ADMM with the following
state-of-the-art methods for nonconvex problems: SADMM in \cite{ouyang2013stochastic}, SVRG-ADMM in \cite{huang2016stochastic}, and SAG-ADMM in \cite{huang2016stochastic}. In the following, all algorithms were
implemented in MATLAB, and the experiments were performed on a PC with an Intel i7-12700F CPU and 16GB memory.


\subsection{Synthetic Data For the Nonconvex Quadratic Problem}


Our setup for the synthetic nonconvex quadratic problem is as follows. We first generate the symmetric matrix $A_i$ from the standard $d_1 \times d_1$ Gaussian distribution. Next, the matrix $A=I$ or $A=[G; I]$, where $G$ is obtained from a sparse inverse
covariance estimation given in  \cite{kim2009multivariate}. Then we solve the following nonconvex quadratic problem:
{	
\begin{equation}
\frac{1}{n} \sum_{i=1}^{n} \frac{\vx^T A_i \vx}{2} + \lambda_1 \|\vy\|_{1},\;\textup{s.t.}\;\vy=A\vx, \nonumber
\end{equation}}
where $\lambda_1= 10^{-4}$ is a positive regularization parameter. Finally, all experimental results are averaged over 300 Monte Carlo experiments.

From Fig.\ref{fig_var}, we can see the  training loss (i.e., the training objective value minus the best value)  and the variance 
of SADMM \cite{ouyang2013stochastic}, SVRG-ADMM \cite{huang2016stochastic}, and ASVRG-ADMM for the nonconvex quadratic problem. All the experimental results show that our ASVRG-ADMM method with smaller variance converges consistently much faster than both SADMM and SVRG-ADMM, which empirically verifies our theoretical results of the effectiveness of the momentum technique resulting in faster convergence.

\subsection{Graph-Guided Fused Lasso}\label{sec 4.1}
\begin{figure}[h] 
\centering
\subfigure{\includegraphics[width=0.432\textwidth]{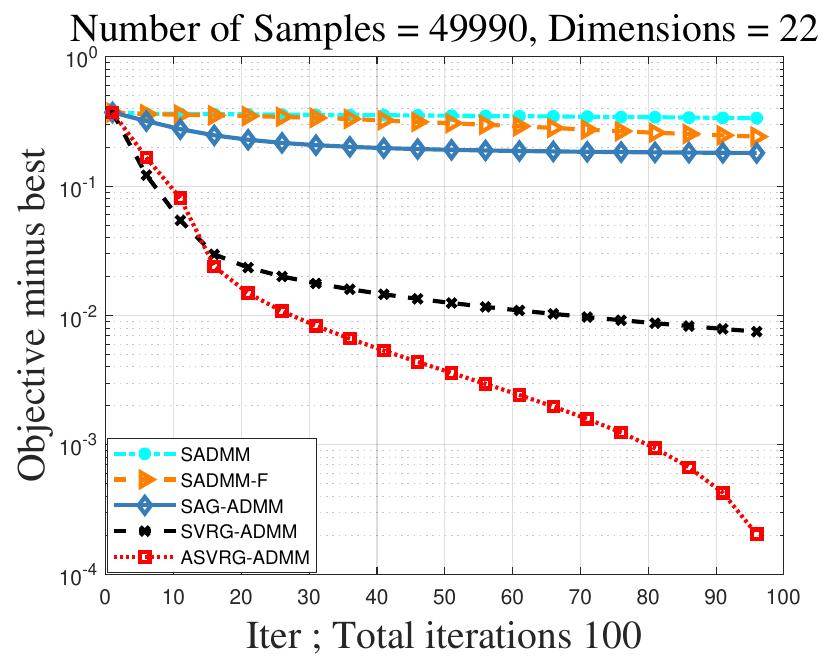} } 
\subfigure{	\includegraphics[width=0.428\textwidth]{ 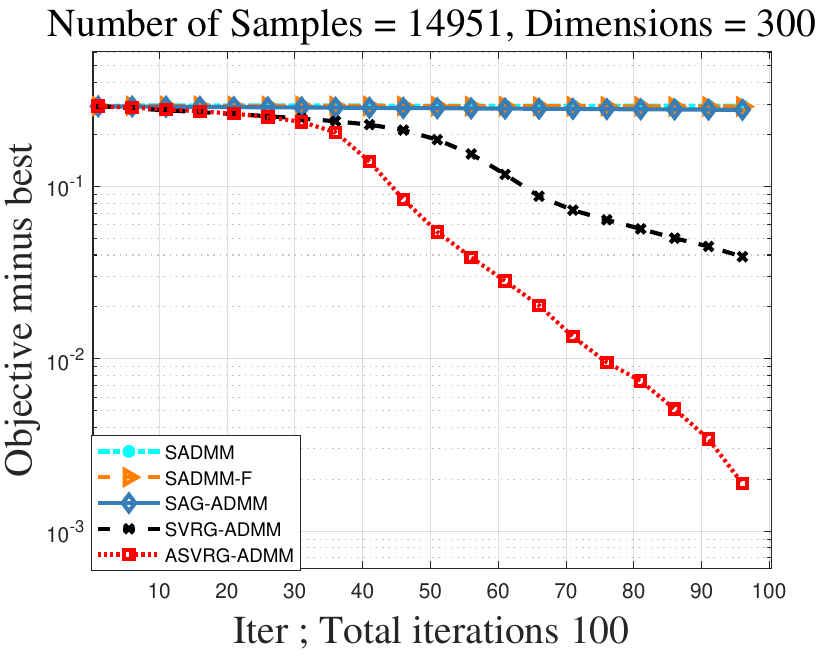}} 
\caption{Comparison of different stochastic ADMM methods
for graph-guided fused lasso problems on the two
data sets: ijcnn1 (left) and  w8a (right).}
\label{fig_3}
\end{figure}

We first evaluate the proposed method for solving the graph-guided fused Lasso problem (\ref{equ fused Lasso}). We set $\eta=2$, $\rho=6$, $\lambda_{1}=10^{-5}$ and $Q=I$, similar to  \cite{huang2016stochastic}.
{Two cases are taken into  account} in Algorithm 1, particularly: a time-varying stepsize parameter
$\eta_{t}=2\sqrt{t+2}$ for the SADMM; a fixed parameter $\eta=2$ for the SADMM-F.
The parameter $m=n$ has been set for the SVRG-ADMM algorithm. At last, the result of all experiments is averaged over 30 Monte Carlos experiments.

We use real datasets ijcnn1, a9a, and w8a downloaded from LIBSVM
to test our proposed method.
The training error of all the methods is shown in Fig.\ref{fig_3}.
The results indicate that
compared to algorithms without variance reduction techniques, e.g., \ SADMM and SADMM-F, the variance reduced stochastic ADMM algorithms (including both SVRG-ADMM and ASVRG-ADMM) converge substantially more quickly.
Notably, { in terms of the experimental convergence performance}, ASVRG-ADMM consistently outperforms all comparison algorithms under all datasets, which is consistent with our theoretical analysis.

\begin{figure}[t] 
\vspace{-0.45cm}
\centering
\subfigure{\includegraphics[width=0.425\textwidth]{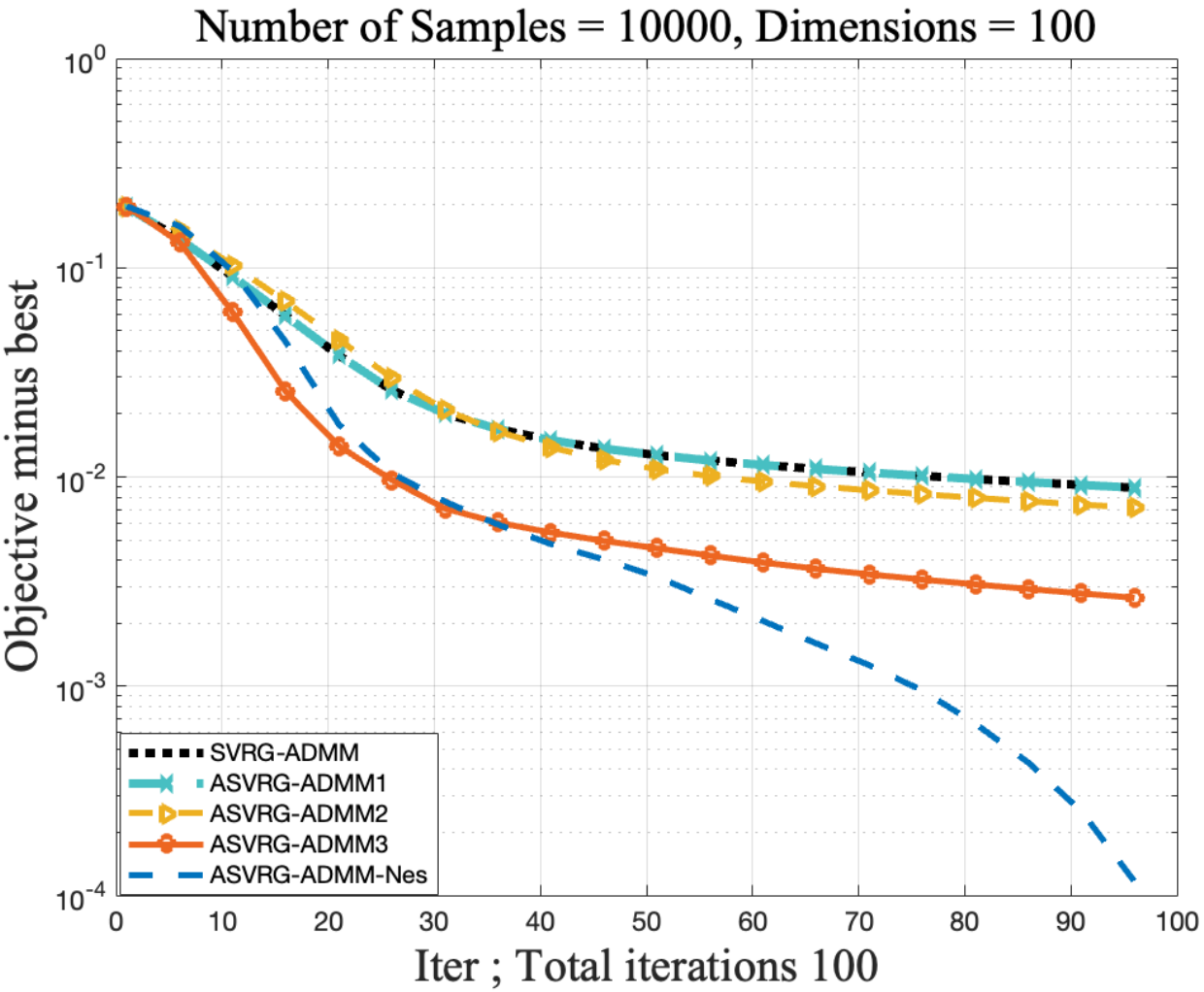}} 
\subfigure{\includegraphics[width=0.435\textwidth]{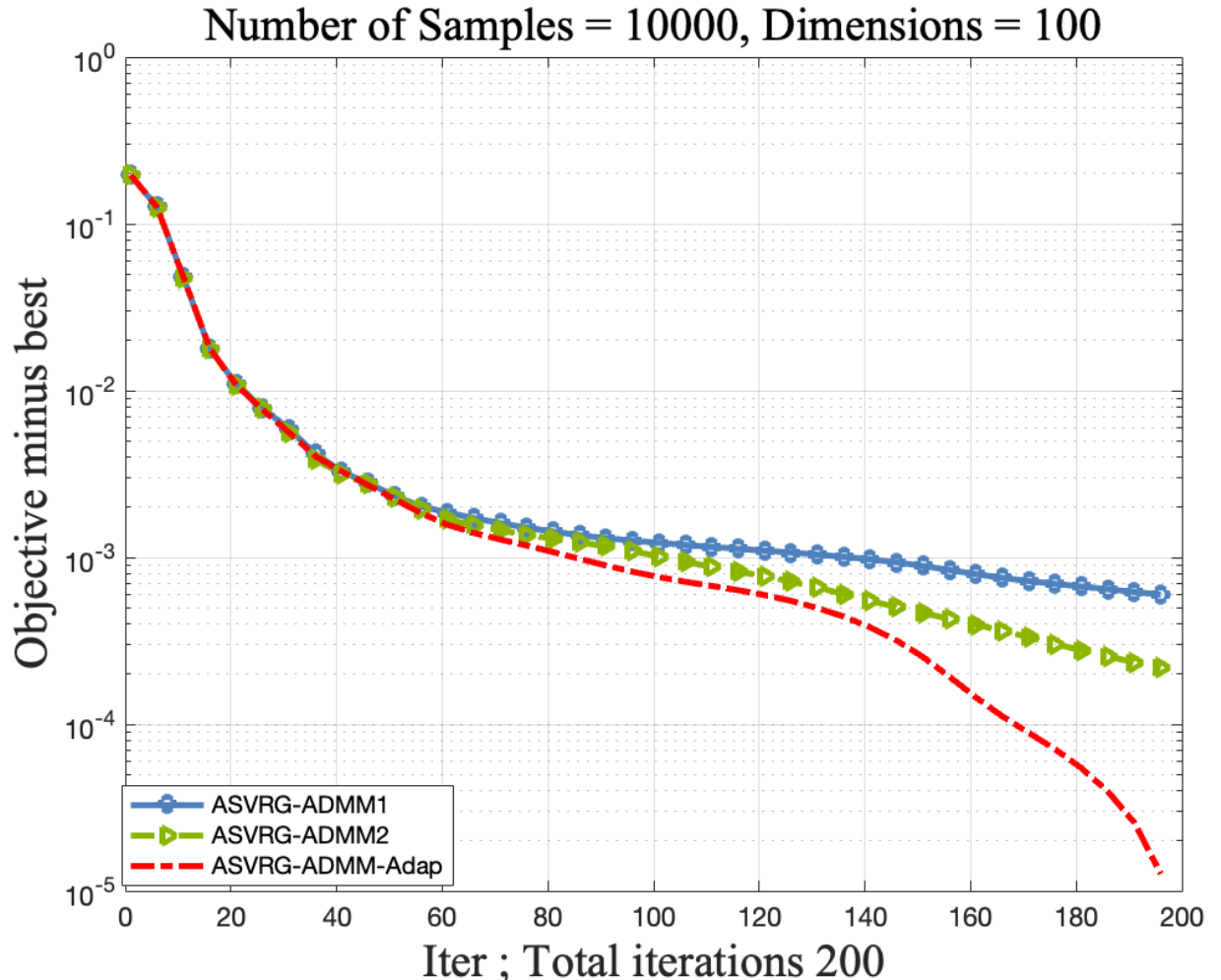}}
\caption{
Comparison of ASVRG-ADMM algorithm for different $\theta$ (left) and $\rho$ (right) on Graph-Guided Fused Lasso using synthetic data.  }
\label{fig_theta and rho}
\end{figure}

We compare various values of momentum $\theta$ in Figure \ref{fig_theta and rho}, namely the larger value $\theta=0.9$ (ASVRG-ADMM2), the smaller but empirically superior value $\theta=0.19$ (ASVRG-ADMM3), and the Nesterov-type momentum parameter \cite{Nesterov2004,liu2020accelerated} (ASVRG-ADMM-Nes).
The results indicate that smaller values of $\theta$ lead to improved experimental outcomes while employing the Nesterov-type decaying momentum parameter results in optimal performance.
Furthermore, we conduct additional comparisons using different values of $\rho$ as shown in Figure 3.
The value of $\rho=60$ in ASVRG-ADMM1 exceeded the experimentally optimal value of $\rho=6$ in ASVRG-ADMM2.
To satisfy the lower bound inequality (\ref{inequ rho}), an adaptive approach known as ASVRG-ADMM-Adap \cite{he2000alternating} can be employed. This approach gradually increases $\rho$ by setting $\rho_{t+1} = \kappa * \rho_{t}$, where $\kappa>1$, to ensure the validity of (\ref{inequ rho}). Smaller values of $\rho$ yield improved experimental performance, with the adaptive approach achieving the best results.

\subsection{Regularized Logistic Regression}
In this subsection, we evaluate the test data performance of ASVRG-ADMM for solving the nonconvex Regularized Logistic Regression (RLR) problem
{	
\begin{equation}\label{acc exp}
\min_{\vx}\frac{1}{n}\sum^{n}_{i=1}\!  f_{i}(\vx)+\lambda_1 \|\vx\|_{1}, \nonumber
\end{equation}}
where $f_{i}(\vx)$ is defined the same as in (\ref{equ fused Lasso}), and the parameter $\lambda_1=2*10^{-5}$. 
In addition, we randomly select 60\%  of the data to train the models and the rest to test the performance.

Fig.\ref{fig_4} shows the experimental results of datasets w8a (left) and a9a (right) respectively. Fig.\ref{fig_4} indicates that both the algorithm SADMM and  SADMM-F consistently converge more slowly than the others and the variances of these two algorithms are larger than the others in all settings, which empirically verifies that variance reduced stochastic ADMM algorithms with smaller variance can perform better.  We observe that SVRG-ADMM and ASVRG-ADMM consistently outperform the others. Moreover, ASVRG-ADMM performs much better than the other methods in all settings, which again verifies the effectiveness of our
ASVRG-ADMM method with the accelerated momentum  technique.

\begin{figure}[htbp] 
\centering
\subfigure{\includegraphics[width=0.435\textwidth]{ 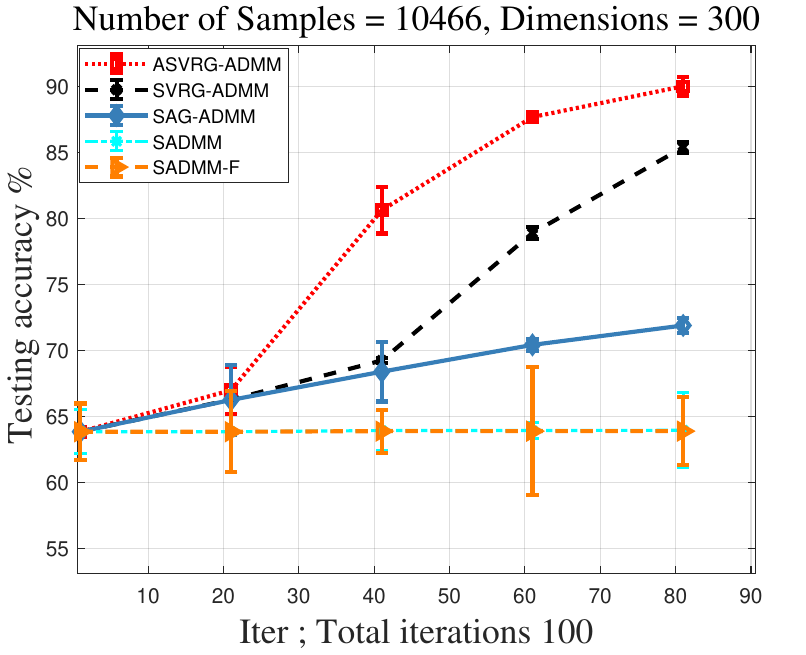}} 
\subfigure{\includegraphics[width=0.43\textwidth]{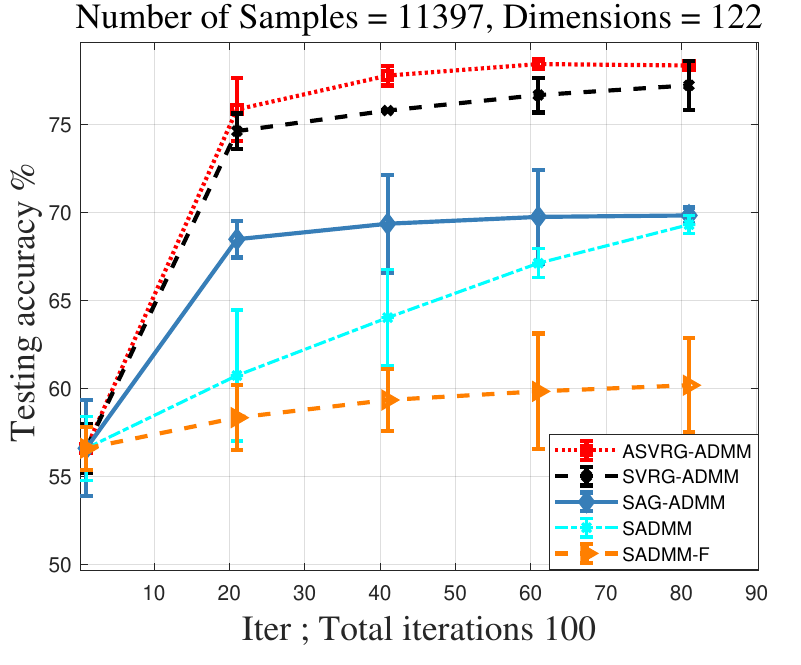}} 
\caption{Comparison of different stochastic ADMM methods
for the accuracy of logistic regression problems.}
\label{fig_4}
\end{figure}

%
%

\section{ Conclusion and Future Work }\label{sec:conclusion}
In this paper,
we integrated both momentum acceleration and the variance reduction trick into stochastic ADMM and proposed an efficient accelerated stochastic variance reduced ADMM for solving the common nonconvex nonsmooth problems.
Furthermore, we theoretically
proved that ASVRG-ADMM obtains a similar convergence to SVRG-ADMM \cite{huang2016stochastic} for nonconvex problems. Especially under the KL property, the convergence rate of ASVRG-ADMM can be improved to the almost sure linear convergence rate. Compared to those of the state-of-the-art stochastic ADMM methods such as SVRG-ADMM,  massive experimental results validate the numerical advantages of  our proposed algorithm.
For future work, there are some interesting directions such as the research of applying the  momentum acceleration trick to other
algorithms like SAG-ADMM  and
SAGA-ADMM \cite{huang2016stochastic}, and the research of the theoretical improvement of the convergence rate of the proposed method.
In addition, it is also interesting to extend ASVRG-ADMM and theoretical results to the distributed optimization and nonsmooth and nonconvex optimization.

\vspace{-0.1cm}

\bibliography{refs}

\newpage

%
%
%



\setcounter{equation}{0}
\renewcommand{\theequation}{S.\arabic{equation}}

\appendices
\section{ Proof of the  Lemma  \ref{lemma of lambda}}\label{app:lemma of h}



\emph{Proof:} \quad	For notational simplicity, we denote
the stochastic gradient $\Delta^{s+1}_t:=\hat{\nabla}f(\vx^{s+1}_t)-\nabla f(\vx^{s+1}_t)$,
where   $\hat{\nabla}f(\vx^{s+1}_t)=\nabla f_{i_t}(\vx^{s+1}_t)-\nabla f_{i_t}(\widetilde{\vx}^{s})+\nabla f(\widetilde{\vx}^{s})$. We also  denote the variance of stochastic gradient $\hat{\nabla}f(\vx^{s+1}_t)$ as $\mathbb{E}_{t}\big[\|\Delta^{s+1}_t\|^2\big]$, 	
and	omit the label $s$ as
$\|\Delta^{s+1}_t\|^2 := \|\Delta_t\|^2$ , $\vx^{s+1}_t := \vx_t$, $\vy^{s+1}_t := \vy_t$, $\vlambda^{s+1}_t := \vlambda_t$, $\widetilde{\vx}^s:=\widetilde{\vx}$, and the conditional expectation operator $\mathbb{E}^s_t:=\mathbb{E}_t$.
By the optimal condition of $\vz$-update (\ref{z-update}) in Algorithm \ref{alg1}, we have
{	
\begin{align}
0  =& \hat{\nabla} f\left(\vx_{t}\right)+\rho A^{\top}\left(A \vz_{t+1}+B \vy_{t+1}-\vc\right)  -A^{\top} \vlambda_{t} + \frac{\theta}{\eta} Q\left(\vz_{t+1}-{ \vz_t}\right) \nonumber \\
= & \hat{\nabla}f(\vx_t)- A^T\vlambda_{t+1} + \frac{\theta}{\eta} Q\left(\vz_{t+1}-{ \vz_t}\right),  \nonumber
\end{align}}
where the second equality follows from the update of $\vlambda$ (\ref{dual-update}) in Algorithm \ref{alg1}. Thus, we have
$A^T\vlambda_{t+1} = \hat{\nabla}f(\vx_t) + \frac{\theta}{\eta} Q\left(\vz_{t+1}-{ \vz_t}\right).$
Recall that
$\vx_{t+1}= \theta \vz_{t+1} + (1-\theta)\widetilde{\vx}$, then it yields $\vx_{t+1}-\vx_t=\theta (\vz_{t+1}-\vz_t)$.
{	
\begin{align} \label{equ 23}
A^T\vlambda_{t+1} = \hat{\nabla}f(\vx_t)- \frac{1}{\eta} Q (\vx_t - \vx_{t+1}).
\end{align}}
Combining (\ref{equ 23}) and the properties of $\|\cdot\|$ 
\begin{align} \label{lambda 24}
	&\|\vlambda_{t+1}-\vlambda_{t}\|^2\nonumber\\
	\leq & (\sigma^{A}_{min})^{-1} \|A^T\vlambda_{t+1}-A^T\vlambda_t\|^2  \nonumber \\
	=& (\sigma^{A}_{min})^{-1}  \|\hat{\nabla}f(\vx_{t})-\hat{\nabla}f(\vx_{t-1}) -\frac{1}{\eta} Q (\vx_t - \vx_{t+1}) -\frac{1}{\eta} Q (\vx_t - \vx_{t-1}) \|^2 \nonumber \\
	=& (\sigma^{A}_{min})^{-1} \|\hat{\nabla}f(\vx_{t})-\nabla f(\vx_{t}) + \nabla f(\vx_{t}) - \nabla f(\vx_{t-1}) + \nabla f(\vx_{t-1}) - \hat{\nabla}f(\vx_{t-1})
	+\frac{1}{\eta} Q (\vx_{t+1} - \vx_{t}) \nonumber\\
	&-\frac{1}{\eta} Q (\vx_t - \vx_{t-1}) \|^2 \nonumber \\
	\mathop{\leq}^{(i)}& \frac{5}{\sigma^{A}_{min}} \|\Delta_t\|^2 +\frac{5 }{\sigma^{A}_{min}} \|\Delta_{t-1}\|^2 +\frac{5 \phi_{\max }^{2}}{\sigma^{A}_{min} \eta^{2}} \left\|\vx_{t}-\vx_{t-1}\right\|^{2}+ \frac{5 \phi_{\max }^{2}}{\sigma^{A}_{min} \eta^{2}} \left\|\vx_{t+1}-\vx_{t}\right\|^{2} \nonumber\\
	&  +\frac{5 }{\sigma^{A}_{min}} \left\| \nabla f(\vx_{t}) - \nabla f(\vx_{t-1})\right\|^{2} ,\nonumber\\
\end{align}
and applying the conditional expectation operator $\mathbb{E}_t$, we obtain
\begin{align}
&\mathbb{E}_{t} \|\vlambda_{t+1}-\vlambda_{t}\|^2\nonumber\\
\mathop{\leq}& \frac{5}{\sigma^{A}_{min}} \|\Delta_t\|^2 +\frac{5 }{\sigma^{A}_{min}} \|\Delta_{t-1}\|^2 +\frac{5 \phi_{\max }^{2}}{\sigma^{A}_{min} \eta^{2}} \left\|\vx_{t}-\vx_{t-1}\right\|^{2}+ \frac{5 \phi_{\max }^{2}}{\sigma^{A}_{min} \eta^{2}}\mathbb{E}_{t} \left\|\vx_{t+1}-\vx_{t}\right\|^{2} \nonumber\\
&  +\frac{5 }{\sigma^{A}_{min}} \left\| \nabla f(\vx_{t}) - \nabla f(\vx_{t-1})\right\|^{2} ,\nonumber
\end{align}
where the first inequality is based on  Assumption \ref{assum4} and the update rule of dual variable given by $\vlambda_{t}^{s+1} - \vlambda_{t+1}^{s+1} = \rho(A \vz_{t+1}^{s+1}+B\vy_{t+1}^{s+1}-c)\in \operatorname{Im}(A)$,
the inequality (i) holds by the Cauchy-Schwartz inequality,  $\phi_{\max}$ denotes the largest eigenvalue of positive matrix $Q$, and $\|Q(x-\vy)\|^2 \leq \phi^2_{\max}\|x-\vy\|^2$.

We  recall the  \cite[Lemma 1]{huang2016stochastic} as follows:
{	\setlength{\abovedisplayskip}{6pt}
\setlength{\belowdisplayskip}{6pt}
\begin{equation}\label{bjc deta}
\mathbb{E}_t \big[ \|\Delta^{s+1}_t\|^2 \big]\leq L^2\|\vx^{s+1}_t-\widetilde{\vx}^{s}\|^2.
\end{equation}}
By inserting (\ref{bjc deta}) and  Assumption \ref{assum grad f} to the above inequality (\ref{lambda 24}), we can estimate the upper bound of $\mathbb{E}_t\big[ \|\vlambda_{t+1}-\vlambda_{t}\|^2\big]$ in \eqref{upp1}.
This completes the proof of Lemma \ref{lemma of lambda}.
\hfill $\blacksquare$

\begin{remark}		
Huang and Chen \cite{huang2018mini} employed a general unbiased stochastic gradient estimator with bounded variance: $$\mathbb{E}\left[\left\|G\left(\vx, \xi_{\mathcal{I}}\right)-\nabla f(\vx)\right\|^2\right] \leq \sigma^2 / M,$$ where $G(\vx, \xi_{\mathcal{I}})=\frac{1}{M} \sum_{i \in \mathcal{I}} G(\vx, \xi_i)$ represents the stochastic gradient estimator, $M$ denotes the mini-batch size, and $\xi_{\mathcal{I}}=\left\{\xi_1, \xi_2, \cdots, \xi_M\right\}$ represents a set of i.i.d. random variables.

Compared to the stochastic gradient estimator with bounded variance in \cite{huang2018mini}, the variance-reduced stochastic gradient defined in Equation (\ref{f-grad-update}) exhibits a decrease in variance with an increasing iteration number, as shown in Equation (\ref{bjc deta}). In contrast, the variance of the general stochastic gradient does not decrease. Our paper leverages the property of variance reduction in the gradient to achieve a linear convergence rate superior to sublinear using the KL property in subsequent analysis.


\end{remark}

\vspace{-0.2cm}
\section{Proof of Lemma \ref{detail lemma of h}} \label{app:lemma 2}
We first define the positive sequences  $\{(h_t^s)_{t=1}^m\}_{s=1}^S$, $\{(\Gamma^{s}_t)_{t=1}^m\}_{s=1}^S$ and the tuple $(\beta_1,\beta_2,\beta_3,\beta_4,\beta_5,\beta_6)$ to be used in constructing the potential energy function and proving the sufficient descent inequality of the potential energy function:
{	\setlength{\abovedisplayskip}{10pt}
\setlength{\belowdisplayskip}{10pt}
\begin{equation}\label{def of h}
h^{s}_t= \left\{
\begin{aligned}
& (2+\alpha_1)h^{s}_{t+1} + \left[ \beta_3+(1+\alpha_1) \beta_1 \right] ,  1 \leq t \leq m-1, \\
& \frac{5L^2}{\sigma^{A}_{min}}\big(\frac{2}{\rho} + \frac{1}{2l_{1}}\big), \quad t=m,
\end{aligned}
\right.\end{equation}}	
{	\setlength{\abovedisplayskip}{1pt}
\setlength{\belowdisplayskip}{1pt}
\begin{equation} \label{def of gamma}
\Gamma^{s}_t =\left\{
\begin{aligned}
&\beta_2-\beta_5-(h_{t+1}^{s}+\beta_1)\Big(1+\frac{1}{\alpha_1}\Big), \ 1 \leq t \leq m-1,  \\
&\beta_6 - \beta_5 - h_{1}^{s}, \quad t=m.
\end{aligned}
\right.\end{equation}}

{	\setlength{\abovedisplayskip}{1pt}
\setlength{\belowdisplayskip}{1pt}
\begin{align} \label{equ defg}
\nonumber \beta_1 &= \left(\frac{\rho+l_{1}}{2}\right) \left(\frac{1-\theta}{\theta}\right)^{2}\sigma^{A}_{max},\\
\nonumber\beta_2 &= \frac{\phi_{\min}}{\eta} -\frac{L}{2} - \frac{5\phi^2_{\max}  }{\sigma^{A}_{min}\eta^2}(\frac{1}{\rho} + \frac{1}{2l_{1}})  + \frac{\rho\sigma^{A}_{min}(l_{2}-(1-\theta))}{2\theta^2 l_{2}},\nonumber\\
\nonumber\beta_3 &= -\frac{\rho\sigma^{A}_{min} }{2}\left[\left(\frac{1-\theta}{\theta}\right)^{2}-\frac{(1-\theta) l_{2}}{\theta^{2}}\right] +\left(\frac{1}{\rho}+\frac{1}{2 l_{1}}\right) \frac{5 L^{2}}{\sigma^{A}_{min} },\nonumber\\
\beta_4 &=  \left(\frac{1}{\rho}+\frac{1}{2 l_{1}}\right) \frac{5 L^{2}}{\sigma^{A}_{min} },\\
\nonumber \beta_{5}&=\left(\frac{1}{\rho}+\frac{1}{2 l_{1}}\right) \frac{5 L^{2} \eta^{2}+5 \phi_{\max }^2}{\sigma^{A}_{min} \eta^{2}},\\
\nonumber\beta_6 &= \frac{\phi_{\min }}{\eta }-\frac{L}{2}-\left(\frac{1}{\rho}+\frac{1}{2 l_{1}}\right) \frac{5 \phi_{\max }^{2}}{\sigma^{A}_{min} \eta^{2}}  +\frac{\sigma^{A}_{min}\rho}{2 \theta^{2}} - \sigma^{A}_{max}\left(\frac{\rho +l_{1}}{2}\right)\left(\frac{1-\theta}{\theta}\right)^{2},
\end{align}}

\emph{Proof:}
This proof is structured by two main parts which contain:
\begin{itemize}
\item
First, we will prove that   $\{(\Psi^{s}_{t})_{t=1}^m\}_{s=1}^S$
is sufficiently and  monotonically {decreasing} over $t\in \{1,2,\cdots,m\}$ in each iteration $s\in \{1,2,\cdots,S\}$.
\item  Second, we will prove that $\Psi^{s}_{m} \geq \mathbb{E}_{0}\Psi^{s+1}_{1}$ for any $s\in \{1,2,\cdots,S\}$.
\end{itemize}

For notational simplicity, we   omit the { superscript} $s$  in the first part, i.e.,
let
$
\vx^{s+1}_t = \vx_t, ~\vy^{s+1}_t = \vy_t, ~
\vlambda^{s+1}_t = \vlambda_t, ~
\widetilde{\vx}^{s}=\widetilde{\vx}.
$
By the  $\vy$-update (\ref{y-update}) in Algorithm \ref{alg1}, we have
{	
\begin{align}\label{equ 26}
\mathcal {L}_\rho (\vx_t, \vy_{t+1},\vlambda_t) \leq \mathcal {L}_\rho (\vx_t, \vy_{t},\vlambda_t).
\end{align}}
The optimal condition of $\vz$-update (\ref{z-update}) in Algorithm \ref{alg1} implies
{	
\begin{align}\label{equ 2222}
0 =&({ \vx_t-\vx_{t+1}})^T\big[\hat{\nabla}f(\vx_t)-A^T\vlambda_t +\rho A^T(A \vz_{t+1}+B\vy_{t+1}-\vc) + \frac{\theta}{\eta} Q\left(\vz_{t+1}-\vz_{t}\right)\big] \nonumber \\
=& (\vx_t-\vx_{t+1})^T\big[\hat{\nabla}f(\vx_t) - \nabla f(\vx_t) + \nabla f(\vx_t) -A^T\vlambda_{t}  \big]  +\rho A^T(A \vz_{t+1}+B\vy_{t+1}-\vc)\nonumber\\ &-(\vx_t-\vx_{t+1})^T{ \frac1\eta} Q(\vx_t-\vx_{t+1})\nonumber \\
\mathop{\leq}& f(\vx_t) - f(\vx_{t+1}) + (\vx_t -\vx_{t+1})^T(\hat{\nabla}f(\vx_t) - \nabla f(\vx_t))  + \frac{L}{2} \|\vx_{t+1}-\vx_t\|^2 - \vlambda_t^T(A \vx_t-A\vx_{t+1}) \nonumber \\
& - \frac{1}{\eta}\|\vx_{t+1} -\vx_t\|^2_Q  + \rho (A\vx_t -A\vx_{t+1})^T(A\vz_{t+1}+B\vy_{t+1}-\vc), \nonumber \\
\end{align}}
where the inequality holds by the Assumption \ref{assum grad f}.  Inserting the equality
$(a-b)^T(c-d) = \frac{1}{2}(\|a-d\|^2-\|b-d\|^2 +\|b-c\|^2 -\|c-a\|^2)$ on the term $ \rho (A\vx_t -A\vx_{t+1})^T(A\vz_{t+1}+B\vy_{t+1}-c)$ into the above inequality (\ref{equ 2222}), we have the following trivial inequality
{	
\begin{align}\label{equ 2333}
0   \mathop{\leq}& f(\vx_t) - f(\vx_{t+1}) + (\vx_t -\vx_{t+1})^T(\hat{\nabla}f(\vx_t) - \nabla f(\vx_t))  +\frac{L}{2} \|\vx_{t+1}-\vx_t\|^2  - \frac{1}{\eta}\|\vx_{t+1}-\vx_t\|^2_Q \nonumber \\
&  - \vlambda_t^T(A \vx_t+B\vy_{t+1}-\vc) + \vlambda_t^T(A\vx_{t+1}+B\vy_{t+1}-\vc)\nonumber\\
&+ \frac{\rho}{2} \Big( \left\|A \vx_{t}+B \vy_{t+1}-\vc\right\|^{2} +\left\|A \vz_{t+1}-A \vx_{t+1}\right\|^{2} -\left\|A \vx_{t+1}+B \vy_{t+1}-\vc\right\|^{2} -\left\|A \vz_{t+1}-A \vx_{t}\right\|^{2} \Big) \nonumber \\
= &  \mathcal {L}_\rho (\vx_t, \vy_{t+1},\vlambda_t)- \mathcal {L}_\rho (\vx_{t+1}, \vy_{t+1},\vlambda_t) +(\vx_t -\vx_{t+1})^T(\hat{\nabla}f(\vx_t) - \nabla f(\vx_t)) +\frac{L}{2} \|\vx_{t+1}-\vx_t\|^2 \nonumber \\
&   + \frac{\rho}{2}\left(\left\|A \vz_{t+1}-A \vx_{t+1}\right\|^{2}-\left\|A \vz_{t+1}-A \vx_{t}\right\|^{2}\right) - \frac{1}{\eta}\|\vx_{t+1}-\vx_t\|^2_Q.
\end{align}}

By inserting $-\phi_{\min}\|\vx_{t+1}-\vx_t\|^2 \geq -\|\vx_{t+1}-\vx_t\|^2_Q$ into the above inequality (\ref{equ 2333}), we have
{	
\begin{align}
0 &\mathop{\leq} \mathcal {L}_\rho (\vx_t, \vy_{t+1},\vlambda_t)- \mathcal {L}_\rho (\vx_{t+1}, \vy_{t+1},\vlambda_t) + (\vx_t -\vx_{t+1})^T(\hat{\nabla}f(\vx_t) - \nabla f(\vx_t)) \nonumber \\
&\quad+ \frac{\rho}{2}\left(\left\|A \vz_{t+1}-A \vx_{t+1}\right\|^{2}-\left\|A \vz_{t+1}-A \vx_{t}\right\|^{2}\right)+\frac{L}{2}\left\|\vx_{t+1}-\vx_{t}\right\|^{2} - { \frac{\phi_{\min}}{\eta}\|\vx_{t+1}-\vx_t\|^2}.\label{equ 302}
\end{align}}

Then, applying  conditioned expectation $\mathbb{E}_t$ on information $i_t$ to (\ref{equ 302}), and using $\mathbb{E}_t[\hat{\nabla} f(\vx_{t})]=\nabla f(\vx_{t})$, we have
{	
\begin{align}\label{equ 28}
\mathbb{E}_t &[\mathcal {L}_\rho (\vx_{t+1}, \vy_{t+1},\vlambda_t)] \leq \mathcal {L}_\rho (\vx_t, \vy_{t+1},\vlambda_t) +{ \Big(\frac{L}{2}- \frac{\phi_{\min}}{\eta}\Big)}
\mathbb{E}_t\left\|\vx_{t+1}-\vx_{t}\right\|^2 \nonumber\\
&+\frac{\rho}{2}\left(\mathbb{E}_t\left\|A \vz_{t+1}-A \vx_{t+1}\right\|^{2}-\mathbb{E}_t\left\|A \vz_{t+1}-A \vx_{t}\right\|^{2} \right).
\end{align}}
By the $\vlambda$-update (\ref{dual-update}) in Algorithm \ref{alg1}, and applying  conditioned expectation $\mathbb{E}_t$ on information $i_t$ again, we have
{	
\begin{align}
&\mathbb{E}_t [\mathcal {L}_\rho (\vx_{t+1}, \vy_{t+1},\vlambda_{t+1})-\mathcal {L}_\rho (\vx_{t+1}, \vy_{t+1},\vlambda_t)] \nonumber\\
=&\frac{1}{\rho}\mathbb{E}_t \|\vlambda_{t+1}-\vlambda_t\|^2 + \mathbb{E}_t\left\langle\vlambda_{t}-\vlambda_{t+1}, A \vx_{t+1}-A \vz_{t+1}\right\rangle. \label{equ 292}
\end{align}}

Combine (\ref{equ 26}), (\ref{equ 28}) and   (\ref{equ 292}) to get
{	
\begin{align}\label{equ 30}
\mathbb{E}_t [\mathcal {L}_\rho (\vx_{t+1}, \vy_{t+1},\vlambda_{t+1})] \leq & \mathcal {L}_\rho (\vx_t, \vy_{t},\vlambda_t) +{ \Big(\frac{L}{2}- \frac{\phi_{\min}}{\eta}\Big)}
\mathbb{E}_t\left\|\vx_{t+1}-\vx_{t}\right\|^2\nonumber\\
&+\frac{\rho}{2}\left(\mathbb{E}_t \left\|A \vz_{t+1}-A \vx_{t+1}\right\|^{2}-\mathbb{E}_t\left\|A \vz_{t+1}-A \vx_{t}\right\|^{2}\right) \nonumber\\
&+\frac{1}{\rho}\mathbb{E}_t\left\|\vlambda_{t}-\vlambda_{t+1}\right\| ^{2}+\mathbb{E}_t\left\langle\vlambda_{t}-\vlambda_{t+1}, A \vx_{t+1}-A \vz_{t+1}\right\rangle.
\end{align}}

Apply the Cauchy-Schwartz inequality  to the term
$
\left\langle\vlambda_{t}-\vlambda_{t+1}, A \vx_{t+1}-A z_{t+1}\right\rangle$ to drive
{	\setlength{\abovedisplayskip}{5pt}
\setlength{\belowdisplayskip}{5pt}
\begin{align*}
\left\langle\vlambda_{t}-\vlambda_{t+1}, A \vx_{t+1}-A z_{t+1}\right\rangle 
\leq &\frac{1}{2 l_{1}}\|\vlambda_{ t}-\vlambda_{t+1}\|^{2}
+\frac{l_{1}}{2}\left\|A \vx_{t+1}-A z_{t+1}\right \|^{2}, \forall l_{1} > 0.
\end{align*}}

Combining the above inequality with (\ref{equ 30}), we have
{
\begin{align}\label{equ 31}
&\mathbb{E}_t [\mathcal {L}_\rho (\vx_{t+1}, \vy_{t+1},\vlambda_{t+1})]\nonumber\\
\leq & \mathcal {L}_\rho (\vx_t, \vy_{t},\vlambda_t) +{ \Big(\frac{L}{2}- \frac{\phi_{\min}}{\eta}\Big)}
\mathbb{E}_t\left\|\vx_{t+1}-\vx_{t}\right\|^2  +\frac{\rho}{2} \mathbb{E}_t\left(\left\|A \vz_{t+1}-A \vx_{t+1}\right\|^{2}-\left\|A \vz_{t+1}-A \vx_{t}\right\|^{2}\right) \nonumber\\
&  +\left(\frac{1}{\rho}+\frac{1}{2 l_{1}}\right)\mathbb{E}_t \left\|\vlambda_{t}-\vlambda_{t+1}\right\|^{2}+\frac{l_{1}}{2}\mathbb{E}_t\left\| A \vz_{t+1}-A \vx_{t+1} \right\|^{2} \nonumber\\
\mathop{\leq}^{(i)}  &\mathcal {L}_\rho (\vx_t, \vy_{t},\vlambda_t) +\frac{\rho+l_{1}}{2}\mathbb{E}_t\left\|A \vz_{t+1}-A \vx_{t+1}\right\|^{2}-\left[\frac{\phi_{\min }}{\eta}-\frac{L}{2}-\Big(\frac{1}{\rho}+\frac{1}{2 l_{1}}\Big) \frac{5 \phi_{\max }^{2}}{\sigma^{A}_{min} \eta^{2}}\right] \mathbb{E}_t\left\|\vx_{t+1}-\vx_{t}\right\|^{2} \nonumber\\
&   -\frac{\rho}{2}\mathbb{E}_t\left\|A \vz_{t+1}-A \vx_{t}\right\|^{2}
+
\Big(\frac{1}{\rho}+\frac{1}{2 l_{1}}\Big) \big(\frac{5 L^{2}}{\sigma^{A}_{min}}\left\|\vx_{t}-\widetilde{\vx}\right\|^{2}+\frac{5 L^{2}}{\sigma^{A}_{min}}\left\|\vx_{t-1}-\widetilde{\vx}^{2}\right\|^{2}\nonumber\\
&+ \frac{5 L^{2} \eta^{2} +5 \phi_{\max }^{2} }{\sigma^{A}_{min} \eta^{2}}\left\|\vx_{t}-\vx_{t-1}\right\|^{2}
\big),
\end{align}}
where the inequality $(i)$ holds by Lemma \ref{lemma of lambda}.

Notice by the update of $\vx_{t+1}$ in Algorithm \ref{alg1} that $\vz_{t+1}-\vx_{t}=\frac1\theta(\vx_{t+1}-\vx_{t}) +\frac{1-\theta}{\theta}(\widetilde{\vx}-\vx_{t})$ and $\vz_{t+1}-\vx_{t+1}= \frac{1-\theta}{\theta}(\vx_{t+1}-\widetilde{\vx})$.
{Substitute  these two equations into
$\left\|A \vz_{t+1}-A \vx_{t+1}\right\|^{2}$ and $\left\|A \vz_{t+1}-A \vx_{t}\right\|^{2}$ to have
\begin{align}
&\frac{\rho+l_{1}}{2}\left\|A \vz_{t+1}-A \vx_{t+1}\right\|^{2}-\frac{\rho}{2}\left\|A \vz_{t+1}-A \vx_{t}\right\|^{2} \nonumber\\
\leq& \frac{\rho+l_{1}}{2}  \sigma^{A}_{max} \left\|\vz_{t+1}-\vx_{t+1}\right\|^{2}-\frac{\rho}{2} \sigma^{A}_{min}\left\|\vz_{t+1}-\vx_{t}\right\|^{2} \nonumber\\
=&-\frac{\rho \sigma_{\min }^{A}}{2}\left[\frac{1}{\theta^{2}}\left\|\vx_{t+1}-\vx_{t}\right\|^{2}+\left(\frac{1-\theta}{\theta}\right)^{2}\left\|\vx_{t}-\widetilde{\vx}\right\|^{2}\right] - \frac{\rho \sigma^{A}_{min} }{\theta}  \left(1-\frac{1}{\theta}\right) <\vx_{t+1}-\vx_{t}, \vx_{t}-\widetilde{\vx}> \nonumber \nonumber\\
&+ \frac{\rho+l_{1}}{2} \sigma_{\max }^{A}\left(\frac{1-\theta}{\theta}\right)^{2}\left\|\vx_{t+1}-\widetilde{\vx}\right\|^{2}, \nonumber\\
\end{align} }	
then we can drive
\begin{align} \label{euq 32}
&\frac{\rho+l_{1}}{2}\left\|A \vz_{t+1}-A \vx_{t+1}\right\|^{2}-\frac{\rho}{2}\left\|A \vz_{t+1}-A \vx_{t}\right\|^{2} \nonumber\\
\mathop{\leq}^{(i)}& (\frac{\rho+l_{1}}{2}) \left(\frac{1 -\theta}{\theta}\right)^{2} \sigma^{A}_{max}\left\|\vx_{t+1}-\widetilde{\vx}\right\|^{2} -  \frac{\rho \sigma^{A}_{min}}{2} \left[\frac{1}{\theta^{2}}\left\|\vx_{t+1}-\vx_{t}\right\|^{2}+\left(\frac{1-\theta}{\theta}\right)^{2}\left\|\vx_{t}-\widetilde{\vx}\right\|^{2} \right]\nonumber\\
&+\frac{\rho \sigma^{A}_{min}}{2}\frac{(1-\theta)}{\theta^{2} l_{2}}\left\|\vx_{t+1}-\vx_{t}\right\|^{2}+\frac{\rho  \sigma^{A}_{min}}{2} \frac{(1-\theta) l_{2}}{\theta^{2}}\left\|\vx_{t}-\widetilde{\vx}\right\|^{2},
\end{align}
where the first equality holds by the equality $\vx_{t+1}= \theta \vz_{t+1} + (1-\theta)\widetilde{\vx}$ and the inequality (i) holds by using the   Cauchy-Schwartz on the term $<\vx_{t+1}-\vx_{t}, \vx_{t}-\widetilde{\vx}>$.

Thus, by the inequalities (\ref{equ 30}), (\ref{equ 31}) and (\ref{euq 32}), we have
{	
\begin{align}
&\mathbb{E}_t [\mathcal {L}_\rho (\vx_{t+1}, \vy_{t+1},\vlambda_{t+1})] \nonumber\\
\leq  &\mathcal {L}_\rho (\vx_{t}, \vy_{t},\vlambda_{t})
- \big(\frac{\phi_{\min}}{\eta} -\frac{L}{2} - \frac{5\phi^2_{\max}  }{\sigma^{A}_{min}\eta^2}(\frac{1}{\rho} + \frac{1}{2l_{1}})  + \frac{\rho\sigma^{A}_{min}(l_{2}-(1-\theta))}{2\theta^2 l_{2}} \big) \mathbb{E}_t\left\|\vx_{t+1}-\vx_{t}\right\|^{2} \nonumber\\
&+\Bigg[\left(\frac{1}{\rho}+\frac{1}{2 l_{1}}\right) \frac{5 L^{2}}{\sigma^{A}_{min} }- \frac{\rho\sigma^{A}_{min}}{2}\big(\left(\frac{1-\theta}{\theta}\right)^{2} -\frac{(1-\theta) l_{2}}{\theta^{2}}\big) \Bigg] \left\|\vx_{t}-\widetilde{\vx}\right\|^{2} \nonumber\\
&+ \left(\frac{1}{\rho}+\frac{1}{2 l_{1}}\right) \frac{5 L^{2}}{\sigma^{A}_{min} } \left\|\vx_{t-1}-\widetilde{\vx}\right\|^{2} +\left(\frac{1}{\rho}+\frac{1}{2 l_{1}}\right) \frac{5 L^{2} \eta^{2}+5 \phi_{\max }^2}{\sigma^{A}_{min} \eta^{2}}\left\|\vx_{t}-\vx_{t-1}\right\|^{2}\nonumber\\
&+\left(\frac{\rho+l_{1}}{2}\right) \left(\frac{1-\theta}{\theta}\right)^{2}\sigma^{A}_{max}\mathbb{E}_t\left\|\vx_{t+1}-\widetilde{\vx}\right\|^{2}. \label{equ 382}
\end{align}}
The definitions of $\beta_1, \beta_2, \beta_3, \beta_4,$ \text{and}  $\beta_5$ given in  Lemma \ref{detail lemma of h},  (\ref{equ 382}) can be rewritten as
{	
\begin{align}
&\mathbb{E}_t [\mathcal {L}_\rho (\vx_{t+1}, \vy_{t+1},\vlambda_{t+1})] \nonumber\\
\leq &\mathcal {L}_\rho (\vx_{t}, \vy_{t},\vlambda_{t})  - \beta_2 \mathbb{E}_t\left\|\vx_{t+1}-\vx_{t}\right\|^{2}+\beta_3\left\|\vx_{t}-\widetilde{\vx}\right\|^{2}
+\beta_4 \left\|\vx_{t-1}-\widetilde{\vx}\right\|^{2} \nonumber\\
&+ \beta_5\left\|\vx_{t}-\vx_{t-1}\right\|^{2}+\beta_1\mathbb{E}_t\left\|\vx_{t+1}-\widetilde{\vx}\right\|^{2}.\nonumber
\end{align}}

For notational simplicity, we let $\mathcal {L}_\rho(t) := \mathcal {L}_\rho (\vx_{t}, \vy_{t},\vlambda_{t})$, $\mathcal {L}_\rho(t+1) := \mathbb{E}_t\mathcal {L}_\rho (\vx_{t+1}, \vy_{t+1},\vlambda_{t+1})$ then we have
{	
\begin{align}
&\mathcal{L}_{\rho}(t+1)+\beta_{5}\mathbb{E}_t\left\|\vx_{t+1}-\vx_{t}\right\|^{2}+h\left(\mathbb{E}_t\left\|\vx_{t+1}-\widetilde{\vx}\right\|^{2}+\left\|\vx_{t}-\widetilde{\vx}\right\|^{2}\right) \nonumber\\
\leq& \mathcal {L}_\rho (t)-(\beta_2-\beta_5)\mathbb{E}_t\left\|\vx_{t+1}-\vx_{t}\right\|^{2}+(h+\beta_1)\mathbb{E}_t\left\|\vx_{t+1}-\widetilde{\vx}\right\|^{2} +(h+\beta_3) \left\|\vx_{t}-\widetilde{\vx}^{2}\right\|^{2}\nonumber\\
&+ \beta_4\left\|\vx_{t-1}-\widetilde{\vx}^{2}\right\|^{2}+\beta_5\left\|\vx_{t}-\vx_{t-1}\right\|^{2}.  \label{equ 399}
\end{align}}
By the Cauchy-Schwartz inequality again, for any $\alpha_1 > 0$ we have
{	
\begin{align}
(h+\beta_1)\left\|\vx_{t+1}-\widetilde{\vx}\right\|^{2} 
\leq &(h+\beta_1 )\left\{\Big(1+\frac{1}{\alpha_1}\Big)\left\|\vx_{t+1}-\vx_{t}\right\|^{2}+\big(1+\alpha_1 \big)\left\|\vx_{t}-\widetilde{\vx}\right\|^{2}\right\}. \nonumber
\end{align}}

Plugging it into (\ref{equ 399}) is to derive
{	
\begin{align}\label{equ 366}
&
\mathcal {L}_\rho (t+1)+\beta_5 \mathbb{E}_t\left\|\vx_{t+1}-\vx_{t}\right\|^{2}+ h\big( \mathbb{E}_t\left\|\vx_{t+1}-\widetilde{\vx}\right\|^{2}+\left\| \vx_{t}-\widetilde{\vx} \right\|^{2} \big) \nonumber\\
\leq & \mathcal {L}_\rho (t) + \beta_5\left\|\vx_{t}-\vx_{t-1}\right\|^{2} - \big[\beta_2-\beta_5-(h+\beta_1) \big(1+\frac{1}{\alpha_1}\big)\big]\mathbb{E}_{t}\left\|\vx_{t+1}-\vx_{t}\right\|^{2} \nonumber\\
&-\left[h+\beta_3-\beta_4+\left(1+\alpha_1\right)(h+\beta_1)\right]\left\|\vx_{t-1}-\widetilde{\vx}\right\|^{2}\nonumber\\
&+ \left[h+ \beta_3+\left(1+\alpha_1 \right)(h+\beta_1)\right]\Big(\left\|\vx_{t}-\widetilde{\vx}\right\|^{2}+\left\|\vx_{t-1}-\widetilde{\vx}\right\|^{2}\Big) .
\end{align}}

Using the definition of  $\{(\Psi_t^s)_{t=1}^m\}_{s=1}^S$, $h = h^s_{t+1}$ and inserting both (\ref{def of h}) and (\ref{def of gamma}) into (\ref{equ 366}), we have
{	
\begin{align}\label{equ 35}
\mathbb{E}_{t}\Psi^{s+1}_{t+1} \leq \Psi^{s+1}_{t} - \big[&h^{s+1}_{t+1} +\beta_3-\beta_4 +\left(1+\alpha_1\right) (h^{s+1}_{t+1}+\beta_1)\big] \|\vx^{s+1}_{t-1}-\widetilde{\vx}^{s}\|^2  -\Gamma^{s+1}_t \mathbb{E}_{t}\|\vx^{s+1}_{t+1}-\vx^{s+1}_t\|^2,
\end{align}}
for any $s\in \{0,1,\cdots,S-1\}$.
Since $\Gamma^s_t >0, \ \forall t\in \{1,2,\cdots, m\}$, we have proved the first part.

In the following, we will prove the second part. We begin by estimating the upper bound of $\mathbb{E}_{0}\|\vlambda^{s+1}_0-\vlambda^{s+1}_1\|^2$.

Since $\vlambda^{s+1}_0 = \vlambda^s_m$, $\vy^{s+1}_0=\vy^s_m$ and $\vx^{s+1}_0=\vx^s_m=\widetilde{\vx}^s$, we have
{	
\begin{align}\label{equ 36}
&\mathbb{E}_{0}\|\vlambda^{s+1}_0-\vlambda^{s+1}_1\|^2  = \mathbb{E}_{0}\|\vlambda^{s}_m-\vlambda^{s+1}_1\|^2 \nonumber\\
\leq &(\sigma^{A}_{min})^{-1} \mathbb{E}_{0}\|A^T\vlambda^{s}_m-A^T\vlambda^{s+1}_1\|^2 \nonumber \\
\mathop{=}^{(i)}& (\sigma^{A}_{min})^{-1} \mathbb{E}_{0}\|\hat{\nabla}f(\vx^s_{m-1})-\hat{\nabla}f(\vx^{s+1}_{0}) -\frac{1}{\eta} Q(\vx^s_{m-1}-\vx^s_m)
-\frac{1}{\eta} Q (\vx^{s+1}_{0}-\vx^{s+1}_1)\|^2  \nonumber \\
\mathop{=}^{(ii)} &(\sigma^{A}_{min})^{-1} \mathbb{E}_{0}\|\hat{\nabla}f(\vx^s_{m-1})
- \nabla f(\vx^s_{m-1}) + \nabla f(\vx^s_{m-1})-\nabla f(\vx^{s}_{m})-\frac{1}{\eta} Q(\vx^s_{m-1}-\vx^s_m)\nonumber\\
& - \frac{1}{\eta} Q(\vx^{s+1}_{0}-\vx^{s+1}_1)\|^2  \nonumber \\
\mathop{\leq}^{(iii)} &\frac{5L^2}{\sigma^{A}_{min}} \|\vx^s_{m-1}-\widetilde{\vx}^{s-1}\|^2 + \frac{5 L^{2} \eta^{2}+5 \phi_{\text {max }}^{2}}{\sigma^{A}_{min} \eta^{2}} \|\vx^s_{m-1}-\vx^s_{m}\|^2  + \frac{5\phi^2_{\max}}{\sigma^{A}_{min}\eta^2} \mathbb{E}_{0}\|\vx^{s+1}_{0}-\vx^{s+1}_1\|^2,
\end{align}}
where the equality $(i)$ holds by the equality (\ref{equ 23}), the inequality $(iii)$  holds by  Assumption \ref{assum grad f} and the (\ref{bjc deta}), and the equality $(ii)$ holds by the following result:
{	\setlength{\abovedisplayskip}{2pt}
\setlength{\belowdisplayskip}{2pt}
\begin{align}
\hat{\nabla}f(\vx^{s+1}_{0}) & = \nabla f_{i_t}(\vx^{s+1}_{0}) - \nabla f_{i_t}(\widetilde{\vx}^{s}) + \nabla f(\widetilde{\vx}^{s})  \nonumber\\
& = \nabla f_{i_t}(\vx^{s}_{m}) - \nabla f_{i_t}(\vx^{s}_m) + \nabla f(\vx^{s}_m)  \nonumber\\
& = \nabla f(\vx^{s}_m). \nonumber
\end{align}}

By (\ref{equ 26}), we have
{	
\begin{align}\label{equ 37}
\mathcal {L}_\rho (\vx^{s+1}_0, \vy^{s+1}_{1},\vlambda^{s+1}_0)
\leq &\mathcal {L}_\rho (\vx^{s+1}_0, \vy^{s+1}_{0},\vlambda^{s+1}_0)\nonumber\\
= &\mathcal {L}_\rho (\vx^{s}_m, \vy^{s}_{m},\vlambda^{s}_m).
\end{align}}
Similarly, using (\ref{equ 28}), we have
{	
\begin{align}\label{equ 38}
&\mathbb{E}_{0} [\mathcal {L}_\rho (\vx^{s+1}_{1}, \vy^{s+1}_{1},\vlambda^{s+1}_0)] \leq \mathcal {L}_\rho (\vx^{s+1}_0, \vy^{s+1}_{1},\vlambda^{s+1}_0)+{ \Big(\frac{L}{2}- \frac{\phi_{\min}}{\eta}\Big)} \mathbb{E}_{0}\left\|\vx^{s+1}_{1}-\vx^{s+1}_{0}\right\|^{2}\nonumber\\
&  +\frac{\rho}{2}\left(\mathbb{E}_{0}\left\|A \vz^{s+1}_{1}-A \vx^{s+1}_{1}\right\|^{2}-\mathbb{E}_{0}\left\|A \vz^{s+1}_{1}-A \vx^{s+1}_{0}\right\|^{2}\right).
\end{align}

For the update of $\vlambda$ (\ref{dual-update}):
\begin{align}\label{equ 39}
&\mathbb{E}_{0} [\mathcal {L}_\rho\left(\vx_{1}^{s+1}, \vy_{1}^{s+1}, \vlambda_1^{s+1}\right)] \nonumber\\
\leq& \mathbb{E}_{0} [\mathcal {L}_\rho\left(\vx_{1}^{s+1}, \vy_{1}^{s+1}, \vlambda_{0}^{s+1}\right)] +\frac{l_{1}}{2} \mathbb{E}_{0}\left\|A \vz^{s+1}_{1}-A \vx^{s+1}_{1}\right\|^{2}+\left(\frac{1}{\rho}+\frac{1}{2 l_{1}}\right)\mathbb{E}_{0}\left\|\vlambda^{s+1}_{1}-\vlambda^{s+1}_{ 0}\right\|^{2},
\end{align}
where the inequality holds by the Cauchy-Schwartz inequality just like in (\ref{equ 30}).
Combine (\ref{equ 37}), (\ref{equ 38}) with (\ref{equ 39}) and use  similar tricks, we have}
{	
\begin{align}\label{equ 47}
& \mathbb{E}_{0} [\mathcal {L}_\rho\left(\vx_{1}^{s+1}, \vy_{1}^{s+1}, \vlambda_1^{s+1}\right)] \nonumber\\
\leq & \mathbb{E}_{0} \Bigg[\mathcal {L}_\rho\left(\vx_{0}^{s+1}, \vy_{0}^{s+1}, \vlambda_{0}^{s+1}\right) +{ \Big(\frac{L}{2}- \frac{\phi_{\min}}{\eta}\Big)}\left\|\vx^{s+1}_{1}-\vx^{s+1}_{0}\right\|^{2}  + \left(\frac{1}{\rho}+\frac{1}{2 l_{1}}\right)\left\|\vlambda^{s+1}_{0}-\vlambda^{s+1}_{1}\right\|^{2} \nonumber\\
&+ \frac{\rho+l_{1}}{2}\left\|A \vz^{s+1}_{1}-A  \vx^{s+1}_{1}\right\|^{2}-\frac{\rho }{2}\left\|A \vz^{s+1}_{1}-A \vx^{s+1}_{0}\right\|^{2} \Bigg] \nonumber\\
\mathop{\leq }^{(i)}& - \big[ \frac{\phi_{\min }}{\eta }-\frac{L}{2}-\left(\frac{1}{\rho}+\frac{1}{2 l_{1}}\right) \frac{5 \phi_{\max }^{2}}{\sigma^{A}_{min} \eta^{2}} +\frac{\sigma^{A}_{min}\rho}{2 \theta^{2}} - \left(\frac{\rho +l_{1}}{2}\right)\left(\frac{1-\theta}{\theta} \right)^{2}\sigma^{A}_{max} \big]\mathbb{E}_{0} \left\|\vx^{s+1}_{1}-\vx^{s+1}_{0}\right\|^{2} \nonumber\\
& +\mathcal {L}_\rho\left(\vx_{0}^{s+1}, \vy_{0}^{s+1}, \vlambda_{0}^{s+1}\right)+ \left(\frac{1}{\rho}+\frac{1}{2 l_{1}}\right) \frac{5 L^{2}}{\sigma^{A}_{min} }\left\|\vx_{m-1}^{s}-\widetilde{\vx}^{s-1}\right\|^{2}\nonumber\\ &+\left(\frac{1}{\rho}+\frac{1}{2 l_{1}}\right) \frac{5 L^{2} \eta^{2}+5 \phi_{\max }^2}{\sigma^{A}_{min} \eta^{2}}\left\|\vx_{m-1}^{s}-{\vx}_{m}^{s}\right\|^{2} \nonumber\\
\leq & \mathcal {L}_\rho\left(\vx_{0}^{s+1}, \vy_{0}^{s+1}, \vlambda_{0}^{s+1}\right) - \beta_6\mathbb{E}_{0} \left\|\vx^{s+1}_{1}-\vx^{s+1}_{0}\right\|^{2} + \beta_4\left\|\vx_{m-1}^{s}-\widetilde{\vx}^{s-1}\right\|^{2} +\beta_5 \left\|\vx_{m-1}^{s}-{\vx}_{m}^{s}\right\|^{2},
\end{align}}
where $\beta_6$ is given in  Lemma \ref{detail lemma of h}, the equality $(i)$ holds by  using the (\ref{equ 36}), the equality $\vx^{s+1}_{t+1}= \theta \vz^{s+1}_{t+1} + (1-\theta)\widetilde{\vx}^s$ and the following  Cauchy-Schwartz  inequality for the last two terms of the first inequality:
\begin{align}
\frac{1-\theta}{\theta^2}\langle \vx_1^{s+1}-\vx_0^{s+1}, \widetilde{\vx}^s-\vx_0^{s+1}\rangle 
\leq &\frac{1}{2\theta^2} \Big(\|\vx_1^{s+1}-\vx_0^{s+1}\|^2+(1-\theta)^2\|\widetilde{\vx}^s-\vx_0^{s+1}\|^2\Big). \nonumber
\end{align}
Also, for  notational simplicity, let $\mathcal {L}_\rho(0) := \mathcal {L}_\rho (\vx_{0}, \vy_{0},\vlambda_{0})$, $\mathcal {L}_\rho(1) := \mathbb{E}_{0}\mathcal {L}_\rho (\vx_{1}, \vy_{1},\vlambda_{1})$, and sum
\begin{equation}
\beta_5\left\|\mathbf{x}_1^{s+1}-\mathbf{x}_0^{s+1}\right\|^2+h_1^{s+1}\left[\mathbb{E}_{0}\left\|\mathbf{x}_1^{s+1}-\widetilde{\mathbf{x}}^s\right\|^2+\left\|\mathbf{x}_0^{s+1}-\widetilde{\mathbf{x}}^s\right\|^2\right]\nonumber
\end{equation}
to both sides of the (\ref{equ 47}) then we have
\begin{equation}
\begin{aligned}
& \mathcal{L}_\rho(1)+\beta_5\mathbb{E}_{0}\left\|\mathbf{x}_1^{s+1}-\mathbf{x}_0^{s+1}\right\|^2+h_1^{s+1}[\mathbb{E}_{0}\left\|\mathbf{x}_1^{s+1}-\widetilde{\mathbf{x}}^s\right\|^2+\left\|\mathbf{x}_0^{s+1}-\widetilde{\mathbf{x}}^s\right\|^2] \\
\leq & \mathcal{L}_\rho(0)-\beta_6\mathbb{E}_{0}\left\|\mathbf{x}_1^{s+1}-\mathbf{x}_0^{s+1}\right\|^2+\beta_4\left\|\mathbf{x}_{m-1}^s-\widetilde{\mathbf{x}}^{s-1}\right\|^2+\beta_5\left\|\mathbf{x}_{m-1}^s-\mathbf{x}_m^s\right\|^2+\beta_5\mathbb{E}_{0}\left\|\mathbf{x}_1^{s+1}-\mathbf{x}_0^{s+1}\right\|^2 \\
& +h_1^{s+1}\left[\mathbb{E}_{0}\left\|\mathbf{x}_1^{s+1}-\widetilde{\mathbf{x}}^s\right\|^2+\left\|\mathbf{x}_0^{s+1}-\widetilde{\mathbf{x}}^s\right\|^2\right] \\
= & \mathcal{L}_\rho(0)-\left(\beta_6-\beta_5-h_1^{s+1}\right)\mathbb{E}_{0}\left\|\mathbf{x}_1^{s+1}-\mathbf{x}_0^{s+1}\right\|^2+h_1^{s+1}\left[\left\|\mathbf{x}_m^s-\widetilde{\mathbf{x}}^{s-1}\right\|^2+\left\|\mathbf{x}_{m-1}^s-\widetilde{\mathbf{x}}^{s-1}\right\|^2\right] \\
& +\beta_5\left\|\mathbf{x}_{m-1}^s-\mathbf{x}_m^s\right\|^2-\left(h_1^{s+1}-\beta_4\right)\left\|\mathbf{x}_{m-1}^s-\widetilde{\mathbf{x}}^{s-1}\right\|^2-h_1^{s+1}\left\|\mathbf{x}_m^s-\widetilde{\mathbf{x}}^{s-1}\right\|^2.
\end{aligned}\nonumber
\end{equation}
By the notation $h_{1}^{s+1} = \left(\frac{2}{\rho}+\frac{1}{2 l_{1}}\right) \frac{5 L^{2}}{\sigma^{A}_{min}}$ in (\ref{def of h}) and the definition of the sequence $\{(\Psi_t^s)_{t=1}^m\}_{s=1}^S$, we have
{	
\begin{align}\label{equ 42}
\mathbb{E}_{0}\Psi^{s+1}_{1} \leq \Psi^{s}_{m} &- \Gamma^{s}_m\mathbb{E}_{0}\|\vx^{s+1}_1 - \vx^{s+1}_{0}\|^2 -\frac{5L^2}{\sigma^{A}_{min}\rho} \|\vx^s_{m-1}-\widetilde{\vx}^{s-1}\|^2.
\end{align}}
Since $\Gamma^{s}_m>0,\ \forall s \geq 1$, we can obtain the above result of the second part. 

Thus, we prove the above conclusion.
\hfill $\blacksquare$

\section{Proof of  Theorem \ref{thm 1}}
\label{app:theorem 1}
\emph{Proof:} \quad Using the above proofs, inequalities (\ref{equ 35}) and (\ref{equ 42}), we have
{	\setlength{\abovedisplayskip}{10pt}
\setlength{\belowdisplayskip}{10pt}
\begin{align}\label{equ 44}
\mathbb{E}_{t}^{s+1}[\Psi^{s+1}_{t+1}]\leq &\Psi^{s+1}_{t}-\Gamma^{s+1}_t \mathbb{E}_{t}^{s+1}[\|\vx^{s+1}_{t+1}-\vx^{s+1}_t\|^2] - \left[h^{s+1}_{t+1}+\left(1+\alpha_1\right)(h^{s+1}_{t+1}+\beta_1)\right] \|\vx^{s+1}_{t-1}-\widetilde{\vx}^{s}\|^2.
\end{align}}
{The above inequality requires $\beta_3-\beta_4\geq 0$} which can be ensured if  we take $l_2\geq 1-\theta$,
and
{	\setlength{\abovedisplayskip}{10pt}
\setlength{\belowdisplayskip}{10pt}
\begin{align}\label{equ 455}
\mathbb{E}_{0}^{s+1}[\Psi^{s+1}_{1}]  \leq \Psi^{s}_{m} - \Gamma^{s}_m
\mathbb{E}_{0}^{s+1}[\|\vx^{s+1}_0 - \vx^{s+1}_{1}\|^2]
-\frac{5L^2}{\sigma^{A}_{min}\rho} \|\vx^s_{m}-\widetilde{\vx}^{s-1}\|^2.
\end{align}}
for any $s\in \{1,2,\cdots, S\}$ and $t\in\{1,2,\cdots,m\}$.
To establish the convergence of the sequence defined in equation (\ref{equ 21}), we calculate the above conditional expectations in expressions (\ref{equ 44}) and (\ref{equ 455}). By leveraging the property $\mathbb{E}[\mathbb{E}[\cdot \mid \mathcal{F}^{s}_{t}]]=\mathbb{E}[\cdot]$, we then evaluate the full expectation of (\ref{equ 44}) and (\ref{equ 455}). Additionally, we sum up the resulting expressions for (\ref{equ 44}) and (\ref{equ 455}) over the ranges $t=1,2,\ldots,m$ and $s=1,2,\ldots,S$ to obtain
{	
\begin{align}\label{equ 45}
\mathbb{E}\Psi_{m}^{S}- \mathbb{E}\Psi_{1}^{1}
\leq &-\gamma\sum_{s=1}^{S}\sum_{t=1}^{m}\mathbb{E}\|\vx_{t}^{s}-\vx_{t-1}^{s}\|^2  - \omega \sum_{s=1}^{S}\sum_{t=1}^{m} \mathbb{E}\|\vx_{t-1}^{s}-\widetilde{\vx}^{s-1}\|^2,
\end{align}}
where the parameter $\gamma = \min_{s,t} \Gamma^s_t$,
and
{	\setlength{\abovedisplayskip}{10pt}
\setlength{\belowdisplayskip}{10pt}
\begin{align}
\omega &= \min_{s,t}\{\left[h^{s+1}_{t+1}+\left(1+\alpha_1\right)(h^{s+1}_{t+1}+\beta_1)\right],\frac{5L^2}{\sigma^{A}_{min} \rho} \} \nonumber\\
&= \frac{5L^2}{\sigma^{A}_{min} \rho}.\nonumber
\end{align}}
From Assumption \ref{assum2}, there exists a low bound $\Psi^*$ of the sequence $\{\Psi^s_t\}$, i.e., $ \Psi^{s}_{t} \geq \Psi^*$.
Using the definition of $ R^{s}_t$, we have
{	\setlength{\abovedisplayskip}{10pt}
\setlength{\belowdisplayskip}{10pt}
\begin{align}
R^{\hat{s}}_{\hat{t}} = \min_{s,t} R^{s}_t \leq \frac{1}{\tau T} \mathbb{E}(\Psi^{1}_{1} - \Psi^*),
\end{align}}
where $\tau = \min(\gamma,\omega)$  and $T=mS$. This completes the whole proof.
\hfill $\blacksquare$

\section{Proof of the Property of $\left\|\vw_{t}^{s}-\vw_{t+1}^{s}\right\|^2$}\label{app:lemma 4}

In this section, we establish the linear convergence rate of our ASVRG-ADMM under the so-called
{KL} condition. We first draw the following Lemma of
the property of $\left\|\vw_{t+1}^{s}-\vw_{t}^{s}\right\|^2$,  where $\vw= \left(\vx, \vy, \vlambda \right)$ represents the sequence generated by our algorithm.

\begin{lemma}\label{lemma 4} {Let $\left\{\vw^{s}_{t}=\left(\vx_{t}^{s}, \vy_{t}^{s}, \vlambda_{t}^{s}\right)\right\}$ be the sequence generated by ADMM Algorithm \ref{alg1} under  Assumptions \ref{assum grad f}-\ref{Lip sub path}. Then}
{	
\begin{align}
\sum_{s=1}^{+\infty} \sum_{t=1}^{+\infty}\mathbb{E}\left\|\vw_{t+1}^{s}-\vw_{t}^{s}\right\|^{2}<+\infty. \nonumber
\end{align}}
\end{lemma}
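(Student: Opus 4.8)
The plan is to reuse the sufficient-decrease machinery of Lemma~\ref{detail lemma of h} and Theorem~\ref{thm 1}, obtain $\ell^2$-summability (in expectation) of the $\vx$-increments, and then propagate it to the $\vz$-, $\vlambda$- and $\vy$-increments one component at a time. The starting point is inequality~(\ref{equ 45}) from the proof of Theorem~\ref{thm 1}: since $\gamma=\min_{s,t}\Gamma^s_t>0$, $\omega=\frac{5L^2}{\sigma^{A}_{min}\rho}>0$, and $\Psi^{s}_{t}\ge\Psi^*$ by Assumption~\ref{assum2}, letting $S\to\infty$ gives
\[
\gamma\sum_{s=1}^{\infty}\sum_{t=1}^{m}\mathbb{E}\|\vx^{s}_{t}-\vx^{s}_{t-1}\|^2+\omega\sum_{s=1}^{\infty}\sum_{t=1}^{m}\mathbb{E}\|\vx^{s}_{t-1}-\widetilde{\vx}^{s-1}\|^2\le\mathbb{E}(\Psi^{1}_{1}-\Psi^*)<+\infty,
\]
so both $\sum_{s,t}\mathbb{E}\|\vx^{s}_{t+1}-\vx^{s}_{t}\|^2$ and $\sum_{s,t}\mathbb{E}\|\vx^{s}_{t}-\widetilde{\vx}^{s-1}\|^2$ are finite. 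Because the epoch resets $\vx^{s+1}_0=\vx^{s}_m$, $\vy^{s+1}_0=\vy^{s}_m$, $\vlambda^{s+1}_0=\vlambda^{s}_m$ are exact, the cross-epoch increments vanish and it suffices to bound the intra-epoch differences.

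Next I would handle $\vz$ and $\vlambda$. From $\vx^{s+1}_{t+1}=\theta\vz^{s+1}_{t+1}+(1-\theta)\widetilde{\vx}^s$ we get $\vx^{s+1}_{t+1}-\vx^{s+1}_{t}=\theta(\vz^{s+1}_{t+1}-\vz^{s+1}_{t})$, hence $\sum_{s,t}\mathbb{E}\|\vz^{s}_{t+1}-\vz^{s}_{t}\|^2=\theta^{-2}\sum_{s,t}\mathbb{E}\|\vx^{s}_{t+1}-\vx^{s}_{t}\|^2<+\infty$. For $\vlambda$, I would apply Lemma~\ref{lemma of lambda}, take full expectations via $\mathbb{E}[\mathbb{E}[\cdot\mid\mathcal{F}^{s}_{t}]]=\mathbb{E}[\cdot]$, and sum over $s,t$; every term on the right-hand side of (\ref{upp1}) is of one of the two summable types from the first step (bounded index shifts are absorbed because each epoch has only $m$ inner steps), so $\sum_{s,t}\mathbb{E}\|\vlambda^{s}_{t+1}-\vlambda^{s}_{t}\|^2<+\infty$.

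The only genuinely new ingredient is the $\vy$-increment, via Assumption~\ref{Lip sub path}. From the $\vlambda$-update, $B\vy^{s+1}_{t+1}=\vc-A\vz^{s+1}_{t+1}+\tfrac1\rho(\vlambda^{s+1}_{t}-\vlambda^{s+1}_{t+1})$. The optimality condition of the $\vy$-subproblem~(\ref{update-y1}) combined with the same $\vlambda$-update yields $B^{\top}\vlambda^{s+1}_{t+1}\in\partial g(\vy^{s+1}_{t+1})$; since $g$ is convex and independent of $\vx$, this shows $\vy^{s+1}_{t+1}$ is the (unique, by Assumption~\ref{Lip sub path}) minimizer of $\min_\vy\{g(\vy):B\vy=B\vy^{s+1}_{t+1}\}$, i.e. $\vy^{s+1}_{t+1}=H(B\vy^{s+1}_{t+1})$. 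Lipschitz continuity of $H$ then gives, with $L_H$ its Lipschitz constant,
\[
\|\vy^{s}_{t+1}-\vy^{s}_{t}\|\le L_H\|B\vy^{s}_{t+1}-B\vy^{s}_{t}\|\le L_H\Big(\|A\|\,\|\vz^{s}_{t+1}-\vz^{s}_{t}\|+\tfrac1\rho\|\vlambda^{s}_{t+1}-\vlambda^{s}_{t}\|+\tfrac1\rho\|\vlambda^{s}_{t}-\vlambda^{s}_{t-1}\|\Big);
\]
squaring, using $(a+b+c)^2\le 3(a^2+b^2+c^2)$, taking expectations and summing over $s,t$ gives $\sum_{s,t}\mathbb{E}\|\vy^{s}_{t+1}-\vy^{s}_{t}\|^2<+\infty$ by the bounds already obtained. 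Finally, $\|\vw^{s}_{t+1}-\vw^{s}_{t}\|^2=\|\vx^{s}_{t+1}-\vx^{s}_{t}\|^2+\|\vy^{s}_{t+1}-\vy^{s}_{t}\|^2+\|\vlambda^{s}_{t+1}-\vlambda^{s}_{t}\|^2$, and adding the three finite series closes the argument.

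I expect the main obstacle to be the $\vy$-step: justifying that the $\vy$-subproblem minimizer equals the Lipschitz sub-minimization map $H$ evaluated at $B\vy^{s+1}_{t+1}$ (reconciling the nominal $\vx$-dependence of $H$ with the fact that $g$ does not involve $\vx$), and then keeping the index-shifted $\vlambda$- and $\vz$-increments inside the already-established summable families. Everything else is bookkeeping on top of Theorem~\ref{thm 1} and Lemma~\ref{lemma of lambda}.
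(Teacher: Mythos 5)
Your proposal is correct and follows essentially the same route as the paper's proof: summability of the $\vx$-increments from the telescoped descent inequality (\ref{equ 45}), the $\vlambda$-increments via Lemma \ref{lemma of lambda}, and the $\vy$-increments via the $\vlambda$-update identity together with Assumption \ref{Lip sub path} (the paper simply cites \cite[Lemma 1]{wang2019global} for the bound $\|\vy_{t+1}-\vy_{t}\|\le \bar M\|B\vy_{t+1}-B\vy_{t}\|$ where you re-derive it). One small correction in your $H$-map step: the optimality condition of (\ref{update-y1}) combined with the dual update gives $B^{\top}\bigl[\vlambda^{s+1}_{t+1}-\rho A(\vx^{s+1}_{t}-\vz^{s+1}_{t+1})\bigr]\in\partial g(\vy^{s+1}_{t+1})$ rather than $B^{\top}\vlambda^{s+1}_{t+1}\in\partial g(\vy^{s+1}_{t+1})$, but since all that is needed to conclude $\vy^{s+1}_{t+1}=H(B\vy^{s+1}_{t+1})$ is that some subgradient of $g$ at $\vy^{s+1}_{t+1}$ lies in $\operatorname{Im}(B^{\top})$, your argument goes through unchanged.
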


\emph{Proof:} \quad  From the inequality (\ref{equ 45}), it follows that
{	
\begin{align}
&\tau \sum_{s=1}^{S} \sum_{t=1}^{m} R_{t}^{s} \leq { \mathbb{E}\Psi_{1}^{1}-\mathbb{E}\Psi_{m}^{s}} \leq +\infty , \nonumber\\
&\quad \sum_{s=1}^{S}\sum_{t=1}^{m} R_{t}^{s} \leq +\infty, \nonumber
\end{align}}
where $\tau = \min(\gamma,\omega)$ { with $\gamma$ and $\omega $   mentioned in the proof of Lemma \ref{thm 1}.}
By the definitions of $R_{t}^{s} $ in (\ref{equ 21}),  we can have that 
$$ \sum_{s=1}^{+\infty} \sum_{t=1}^{+\infty} \mathbb{E}\left\|\vx_{t+1}^{s}-\vx_{t}^{s}\right\|^{2}<+\infty$$.

It follows from  Lemma \ref{lemma of lambda} that $\vlambda$ can be bounded by $R_{t}^{s}$. Thus,
{	
\begin{align}
\sum_{s=1}^{+\infty} \sum_{t=1}^{+\infty} \mathbb{E}\left\|\vlambda_{t+1}^{s}-\vlambda_{t}^{s}\right\|^{2}<+\infty.\nonumber
\end{align}}

Next, we give an upper bound  of
$$ \sum_{s=1}^{+\infty} \sum_{t=1}^{+\infty} \mathbb{E}\left\|\vy_{t+1}^{s}-\vy_{t}^{s}\right\|^{2}$$
based on the following equations
\[
\left\{ \begin{array}{l}
\vlambda_{t+1}^{s}=\vlambda_{t}^{s}-\rho\left(A \vz_{t+1}^{s}+B \vy_{t+1}^{s}-c\right), \\
\vlambda_{t}^{s}=\vlambda_{t-1}^{s}-\rho\left(A \vz_{t}^{s}+B \vy_{t}^{s}-c\right)\\
\vlambda_{t+1}^{s}-\vlambda_{t}^{s}=\left(\vlambda _t^{s}-\vlambda_{t-1}^{s}\right)+\rho\left(A \vz_{t}^{s}-A \vz_{t+1}^{s}\right) +\rho\left(B \vy_{t}^{s}-B \vy_{t+1}^{s}\right).
\end{array}\right.
\]

Together with $\vx_{t+1}= \theta \vz_{t+1} + (1-\theta)\widetilde{\vx}$, simple algebra shows that
{	\setlength{\abovedisplayskip}{10pt}
\setlength{\belowdisplayskip}{10pt}
\begin{align}
&\mathbb{E}\left\|\rho\left(B \mathbf{y}_{t}^{s}-B \mathbf{y}_{t+1}^{s}\right)\right\|^{2}\nonumber\\
\leq & \mathbb{E}\left\|\left(\boldsymbol{\lambda}_{t+1}^{s}-\boldsymbol{\lambda}_{t}^{s}\right)-\left(\boldsymbol{\lambda}_{t}^{s}-\boldsymbol{\lambda}_{t-1}^{s}\right)-\rho\left(A \mathbf{z}_{t}^{s}-A \mathbf{z}_{t+1}^{s}\right) \right\|^{2} \nonumber\\
\leq & 3\mathbb{E}\left\|\boldsymbol{\lambda}_{t+1}^{s}-\boldsymbol{\lambda}_{t}^{s}\right\|^{2}+3 \mathbb{E}\left\|\boldsymbol{\lambda}_{t}^{s}-\boldsymbol{\lambda}_{t-1}^{s}\right\|^{2}+3 \rho^{2} \frac{\sigma_{\max }^{A}}{\theta^{2}} \mathbb{E}\left\|\mathbf{x}_{t+1}^{s}-\mathbf{x}_{t}^{s}\right\|^{2} \label{D.1}
\end{align}}

Then  we can use (\ref{D.1}) drive
{	\setlength{\abovedisplayskip}{10pt}
\setlength{\belowdisplayskip}{10pt}
\begin{align}
\mathbb{E}\left\|\vy_{t}^{s}-\vy_{t+1}^{s}\right\|^{2} &\leq \frac{3\bar{M}^2}{\rho^{2}} \mathbb{E}\left\|\vlambda_{t+1}^{s}-\vlambda_{t}^{s}\right\|^{2}+\frac{3\bar{M}^2}{\rho^{2}} \mathbb{E}\left\|\vlambda_{t}^{s}-\vlambda_{t-1}^{s}\right\|^{2}+\frac{3\bar{M}^2 \sigma_{\max }^{A}}{\theta^{2}} \mathbb{E}\left\|\vx_{t+1}^{s}-\vx_{t}^{s}\right\|^{2},\nonumber
\end{align}}
where the inequality is derived from  \cite[Lemma 1]{wang2019global} with the Assumption \ref{Lip sub path}.

We set $\zeta_{11} = \frac{3 \bar{M}^2}{\rho^2 }$ and $\zeta_{12} = \frac{3 \bar{M}^2 \sigma^A_{max}}{\theta^2}$, such that
{	\setlength{\abovedisplayskip}{10pt}
\setlength{\belowdisplayskip}{10pt}
\begin{align}\label{y-bound}
\sum_{s=1}^{+\infty} \sum_{t=1}^{+\infty} \mathbb{E}\left\|\vy_{t}^{s}-\vy_{t+1}^{s}\right\|^{2} &\leq \zeta_{11}\sum_{s=1}^{+\infty} \sum_{t=1}^{+\infty} \mathbb{E}\left\|\vlambda_{t+1}^{s}-\vlambda_{t}^{s}
\right\|^{2} + \zeta_{11}\sum_{s=1}^{+\infty} \sum_{t=1}^{+\infty} \mathbb{E}\left\|\vlambda_{t-1}^{s}-\vlambda_{t}^{s}\right\|^{2}\nonumber\\
&+ \zeta_{12}\sum_{s=1}^{+\infty} \sum_{t=1}^{+\infty} \mathbb{E}\left\|\vx_{t}^{s}-\vx_{t+1}^{s}\right\|^{2}.
\end{align}}
Recall that $R_{t}^{s}$ defined in (\ref{equ 21}) is
{	
\begin{align*}
R^{s}_t :=&\mathbb{E}\big[ \|\vx^{s}_{t}-\widetilde{\vx}^{s-1}\|^2 + \|\vx^{s}_{t-1}-\widetilde{\vx}^{s-1}\|^2+ \|\vx^{s}_{t+1}-\vx^{s}_t\|^2 + \|\vx^{s}_{t}-\vx^{s}_{t-1}\|^2\big].
\end{align*}}
By Lemma \ref{lemma of lambda}, the first term
{	
$$
\zeta_{11}\sum_{s=1}^{+\infty} \sum_{t=1}^{+\infty} \mathbb{E}\left\|\vlambda_{t+1}^{s}-\vlambda_{t}^{s}
\right\|^{2} + \zeta_{11}\sum_{s=1}^{+\infty} \sum_{t=1}^{+\infty} \mathbb{E}\left\|\vlambda_{t-1}^{s}-\vlambda_{t}^{s}\right\|^{2}
$$}
can be bounded as follows:
{	\setlength{\abovedisplayskip}{10pt}
\setlength{\belowdisplayskip}{10pt}
\begin{align}
&\zeta_{11}\sum_{s=1}^{+\infty} \sum_{t=1}^{+\infty} \mathbb{E}\left\|\vlambda_{t+1}^{s}-\vlambda_{t}^{s}
\right\|^{2} + \zeta_{11}\sum_{s=1}^{+\infty} \sum_{t=1}^{+\infty} \mathbb{E}\left\|\vlambda_{t-1}^{s}-\vlambda_{t}^{s}\right\|^{2}\nonumber\\
&\leq  \zeta_{13} R_{t}^{s},
\end{align}}
where $\zeta_{13}= 2 \zeta_{11} \text{max}\left\{\frac{5L^2}{\sigma^{A}_{min}}, \frac{5\phi_{\max}^2}{\sigma^{A}_{min}\eta^2}, \frac{5(\eta^2L^2+\phi_{\max}^2)}{\sigma^{A}_{min}\eta^2} \right\}$.
The second term $\zeta_{12}\sum_{s=1}^{+\infty} \sum_{t=1}^{+\infty} \mathbb{E}\left\|\vx_{t}^{s}-\vx_{t+1}^{s}\right\|^{2}$ in (\ref{y-bound}) can be bounded by
{	\setlength{\abovedisplayskip}{10pt}
\setlength{\belowdisplayskip}{10pt}
\begin{align}
\zeta_{12}\sum_{s=1}^{+\infty} \sum_{t=1}^{+\infty} \mathbb{E}\left\|\vx_{t}^{s}-\vx_{t+1}^{s}\right\|^{2} \leq \zeta_{12} R^{s}_t.
\end{align}}
Every term on the right-hand side of (\ref{y-bound}) can be bounded by $R_{t}^{s}$. Thus, there exists $\zeta_{14}>0 $ such that the upper bound of all above terms on the right-hand side can be limited by
{	
\begin{align}
\sum_{s=1}^{+\infty} \sum_{t=1}^{+\infty} \mathbb{E}\left\|\vy_{t}^{s}-\vy_{t+1}^{s}\right\|^{2} \leq \zeta_{14}\sum_{s=1}^{+\infty}\sum_{t=1}^{+\infty} R_{t}^{s} <+\infty,
\end{align}}
where $\zeta_{14} = \zeta_{12}+ \zeta_{13}$.
As a result, we obtain
{	\setlength{\abovedisplayskip}{10pt}
\setlength{\belowdisplayskip}{10pt}
\begin{align}
&\sum_{s=1}^{+\infty} \sum_{t=1}^{+\infty} \mathbb{E}\left\|\vy_{t}^{s}-\vy_{t+1}^{s}\right\|^{2}<+\infty,  \nonumber\\
& \sum_{s=1}^{+\infty} \sum_{t=1}^{+\infty} \mathbb{E}\left\|\vw_{t}^{s}-\vw_{t+1}^{s}\right\|^{2}
<+\infty. \nonumber
\end{align}}
This completes the whole proof.
\hfill $\blacksquare$

\setlength{\textfloatsep}{5pt}

\section{Proof of Lemma \ref{partial of Lagra}}\label{app:partial of Lagra}
Now, {based on Lemma \ref{lemma 4}  we} can demonstrate the upper bound of $\mathbb{E} \left\| \partial \mathcal {L}_\rho\left(\vw_{t+1}\right) \right\|$ which is important for the linear convergence {of ASVRG-ADMM}.

\begin{lemma} \label{partial of Lagra}
Let $\left\{\vw_t^{s}=\left(\vx_{t}^{s}, \vy_{t}^{s}, \vlambda_{t}^{s}\right)\right\}$ be the sequence generated by    Algorithm \ref{alg1} under  Assumptions \ref{assum grad f}-\ref{Lip sub path}. For notational simplicity, we omit the upper script $s$ with setting $\left(\vx_{j}, \vy_{j}, \vlambda_{j}\right):= \left(\vx_{t}^s, \vy_{t}^s, \vlambda_{t}^s \right)$, where $j= s*m +t$.
Then, there exists $\xi_{1}>0$ such that
{	\setlength{\abovedisplayskip}{10pt}
\setlength{\belowdisplayskip}{10pt}
\begin{align}
&\mathbb{E}_{t} \left\| \partial \mathcal {L}_\rho\left(\vw_{t+1}\right) \right\| \nonumber\\
\leq &\xi_{1} \big(\mathbb{E}_{t}[\left\|\vx_{t+1}-\widetilde{\vx}\right\|] +\left\|\vx_{t-1}-\widetilde{\vx}\right\| +\left\|\vx_{t}-\widetilde{\vx}\right\| +\left\|\vx_{t}-\vx_{t-1}\right\| +\mathbb{E}_{t}[\left\|\vx_{t+1}-\vx_{t}\right\|] \big).\nonumber
\end{align}}
\end{lemma}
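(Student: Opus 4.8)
The plan is to produce one explicit element of $\partial\mathcal{L}_\rho(\vw_{t+1})$, built from the first-order optimality conditions of the three subproblems of Algorithm \ref{alg1}, and to bound its norm; this majorizes $\mathrm{dist}(0,\partial\mathcal{L}_\rho(\vw_{t+1}))$. Since $f$ is smooth and only $g$ is nonsmooth, the subdifferential splits blockwise, and with $r_{t+1}:=A\vx_{t+1}+B\vy_{t+1}-\vc$ it reads
\[
\partial\mathcal{L}_\rho(\vw_{t+1})=\bigl(\nabla f(\vx_{t+1})-A^{\top}\vlambda_{t+1}+\rho A^{\top}r_{t+1},\ \partial g(\vy_{t+1})-B^{\top}\vlambda_{t+1}+\rho B^{\top}r_{t+1},\ -r_{t+1}\bigr).
\]
I would rewrite each block via the update rules. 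For the $\vy$-block, the optimality of (\ref{y-update}) gives $\xi_g:=B^{\top}\vlambda_t-\rho B^{\top}(A\vx_t+B\vy_{t+1}-\vc)\in\partial g(\vy_{t+1})$; inserting this choice and subtracting collapses the $\vy$-block to $B^{\top}(\vlambda_t-\vlambda_{t+1})+\rho B^{\top}A(\vx_{t+1}-\vx_t)$. For the $\vx$-block I use the identity $A^{\top}\vlambda_{t+1}=\hat\nabla f(\vx_t)+\tfrac{1}{\eta}Q(\vx_{t+1}-\vx_t)$ already established in Appendix \ref{app:lemma of h} (from the $\vz$-step (\ref{z-update}) and dual step (\ref{dual-update})), together with $r_{t+1}=A(\vx_{t+1}-\vz_{t+1})+\tfrac{1}{\rho}(\vlambda_t-\vlambda_{t+1})$ and $\vx_{t+1}-\vz_{t+1}=-\tfrac{1-\theta}{\theta}(\vx_{t+1}-\widetilde{\vx})$ coming from (\ref{x-update}); writing $\Delta_t:=\hat\nabla f(\vx_t)-\nabla f(\vx_t)$, the $\vx$-block becomes $\bigl(\nabla f(\vx_{t+1})-\nabla f(\vx_t)\bigr)-\Delta_t-\tfrac{1}{\eta}Q(\vx_{t+1}-\vx_t)-\rho\tfrac{1-\theta}{\theta}A^{\top}A(\vx_{t+1}-\widetilde{\vx})+A^{\top}(\vlambda_t-\vlambda_{t+1})$. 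Finally the $\vlambda$-block is $-r_{t+1}=\tfrac{1-\theta}{\theta}A(\vx_{t+1}-\widetilde{\vx})-\tfrac{1}{\rho}(\vlambda_t-\vlambda_{t+1})$.

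Next I would take $\mathbb{E}_t$ and the triangle inequality, and estimate each term: $\|\nabla f(\vx_{t+1})-\nabla f(\vx_t)\|\le L\|\vx_{t+1}-\vx_t\|$ by Assumption \ref{assum grad f}, $\|Q(\cdot)\|\le\phi_{\max}\|\cdot\|$, $\|A^{\top}A(\cdot)\|\le\sigma^{A}_{max}\|\cdot\|$, $\|A(\cdot)\|\le\sqrt{\sigma^{A}_{max}}\|\cdot\|$, and $\|B^{\top}(\cdot)\|$, $\|B^{\top}A(\cdot)\|$ bounded by constants times $\|\cdot\|$, plus the SVRG variance bound $\mathbb{E}_t\|\Delta_t\|\le(\mathbb{E}_t\|\Delta_t\|^2)^{1/2}\le L\|\vx_t-\widetilde{\vx}\|$ from (\ref{bjc deta}). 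The only remaining term is $\mathbb{E}_t\|\vlambda_t-\vlambda_{t+1}\|$, which I would bound by $(\mathbb{E}_t\|\vlambda_{t+1}-\vlambda_t\|^2)^{1/2}$ and then estimate using Lemma \ref{lemma of lambda}: subadditivity of the square root turns the right-hand side of (\ref{upp1}) into a sum of constant multiples of $\|\vx_t-\widetilde{\vx}\|$, $\|\vx_{t-1}-\widetilde{\vx}\|$, $(\mathbb{E}_t\|\vx_{t+1}-\vx_t\|^2)^{1/2}$ and $\|\vx_t-\vx_{t-1}\|$. Collecting all constants into a single $\xi_1>0$ gives the asserted bound.

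The main obstacle is the bookkeeping forced by the Gauss--Seidel structure of the algorithm: $\vy$ is updated against the stale pair $(\vx_t,\vlambda_t)$ and $\vlambda$ is updated using $\vz_{t+1}$ rather than $\vx_{t+1}$, so none of the three stationarity residuals of $\mathcal{L}_\rho$ at $\vw_{t+1}$ vanishes, and each must be re-expressed purely through the increments $\vx_{t+1}-\vx_t$, $\vlambda_{t+1}-\vlambda_t$ and the momentum gap $\vx_{t+1}-\widetilde{\vx}$ — which is exactly what the identities $\vx_{t+1}=\theta\vz_{t+1}+(1-\theta)\widetilde{\vx}$ and $A^{\top}\vlambda_{t+1}=\hat\nabla f(\vx_t)+\tfrac{1}{\eta}Q(\vx_{t+1}-\vx_t)$ accomplish. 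A second delicate point is the stochastic gradient: because $\vx_{t+1}$ depends on $i_t$, one cannot pass $\mathbb{E}_t$ through $\|\nabla f(\vx_{t+1})-\hat\nabla f(\vx_t)\|$, so the split into the Lipschitz part $\nabla f(\vx_{t+1})-\nabla f(\vx_t)$ and the zero-mean, variance-controlled part $\Delta_t$ must be done before conditioning; the SVRG estimator is essential here, since a generic stochastic gradient would leave an $O(1)$ residual. (One minor imprecision: the substitution via Lemma \ref{lemma of lambda} produces $(\mathbb{E}_t\|\vx_{t+1}-\vx_t\|^2)^{1/2}$ rather than $\mathbb{E}_t\|\vx_{t+1}-\vx_t\|$; this is harmless for the subsequent KL argument, where a further unconditional expectation and Cauchy--Schwarz are applied.)
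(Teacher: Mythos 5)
Your proposal is correct and follows essentially the same route as the paper's proof: you construct explicit elements of the three blocks of $\partial\mathcal{L}_\rho(\vw_{t+1})$ from the first-order optimality conditions of the $\vy$-, $\vz$- and $\vlambda$-updates, re-express them through the increments $\vx_{t+1}-\vx_t$, $\vlambda_{t+1}-\vlambda_t$ and the momentum gap $\vx_{t+1}-\widetilde{\vx}$, and then bound via the triangle inequality, Lipschitz continuity, the SVRG variance estimate (\ref{bjc deta}), and Lemma \ref{lemma of lambda} for the dual increment. Your closing remark about $(\mathbb{E}_t\|\vx_{t+1}-\vx_t\|^2)^{1/2}$ versus $\mathbb{E}_t\|\vx_{t+1}-\vx_t\|$ is a point of care the paper itself glosses over with its $\sqrt{a^2+b^2+c^2+d^2}\le a+b+c+d$ step, and it is indeed harmless downstream.
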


Lemma \ref{partial of Lagra} shows that $\left\| \partial \mathcal {L}_\rho\left(\vw_{t+1}\right) \right\| $ in the Definition \ref{KŁ1} deducing the linear convergence with the  {KL} property, is upper bounded by some primal/iterative residuals. Based on this Lemma, we will show that the sequence $\{\vw_t^s\}$ converges to a critical point of the problem (\ref{equ1}).

\emph{Proof:} \quad From the definition of   $\mathcal {L}_\rho(.)$ in (\ref{equa2}), it follows
{	
\begin{align}
\frac{\partial \mathcal {L}_\rho\left(\vw_{t+1}\right)}{\partial \vx }=&\nabla f\left(\vx_{t+1}\right)-A^{\top} \vlambda_{t+1}+\rho A^{\top}\left(A \vx_{t+1}+B \vy_{t+1}-\vc\right), \nonumber\\
\frac{\partial\mathcal {L}_\rho\left(\vw_{t+1}\right)}{\partial \vy}=&\partial g\left(\vy_{t+1}\right)-B^{\top} \vlambda_{t+1}+ \rho B^{\top}\left(A \vx_{t+1}+B \vy_{t+1}-\vc\right), \nonumber\\
\frac{\partial \mathcal {L}_\rho\left(\vw_{t+1}\right)}{\partial \vlambda}=&-\left(A \vx_{t+1}+B \vy_{t+1}-\vc\right).\nonumber
\end{align}}

Recalling the first-order optimality conditions of the subproblems in Algorithm\ref{alg1} together with the update of $\vlambda_{t+1}$, we have
{	
\begin{align}\label{opt cond}
& \vx:\quad \nabla \hat{f}\left(\vx_{t}\right)=A^{\top} \vlambda_{t+1}-\frac{\theta}{\eta} Q\left(\vz_{t+1}-\vz_{t}\right), \nonumber\\
& \vy: \quad B^{\top} \vlambda_{t+1}-\rho B^{\top} A\left(\vx_{t}-\vz_{t+1}\right) \in \partial g\left(\vy_{t+1}\right),\nonumber\\
& \vlambda: \quad\vlambda_{t+1}=\vlambda_{t}- \rho\left(A \vz_{t+1}+B \vy_{t+1}-\vc\right).
\end{align}}
Invoking the above optimality conditions of the Algorithm\ref{alg1} yields
{	
\begin{align}\label{equ 49}
&
\frac{\vlambda_{t+1}-\vlambda_{t}}{\rho}+A \frac{1-\theta}{\theta}\left(\vx_{t+1}-\widetilde{\vx}\right)\in \partial_{\vlambda} \mathcal{L}_{\rho}\left(\vw_{t+1}\right) , \\
&B^{\top}\left(\vlambda_{t}-\vlambda_{t+1}\right)-\rho B^{\top} A \left( \vz_{t} - \vx_{t+1}\right) \in \partial_{\vy} \mathcal{L}_{\rho}\left(\vw_{t+1}\right), \\
&\nabla f\left(\vx_{t+1}\right)-\nabla \hat{f}\left(\vx_{t}\right)+\frac{1}{\eta} Q \left(\vx_{t}-\vx_{t+1}\right) +A^{\top}\left(\vlambda_{t}-\vlambda_{t+1}\right)\nonumber\\
&+\rho \frac{1-\theta}{\theta} A^{\top} A\left(\widetilde{\vx}-\vx_{t+1}\right) \in \partial_{\vx} \mathcal{L}_{\rho}\left(\vw_{t+1}\right).
\end{align}}
Thus,  we can obtain
{	
\begin{align}
& \mathbb{E}_{t} { dist\left(0, \partial \mathcal {L}_\rho\left(\vw_{t+1}\right)\right)}  \nonumber\\
= &  \mathbb{E}_{t}\left\|\partial_{\vlambda}\mathcal{L}_{\rho}\left(\vw_{t+1}\right)\right\|
+  \mathbb{E}_{t}\left\|\partial_{\vy} \mathcal{L}_{\rho}\left(\vw_{t+1}\right)\right\|
+ \mathbb{E}_{t}\left\|\partial_{\vx} \mathcal{L}_{\rho}\left(\vw_{t+1}\right)\right\| \nonumber\\
\leq &\mathbb{E}_{t}\Big[
\left\| \frac{\vlambda_{t+1}-\vlambda_{t}}{\rho}  \right\| +  \left\| A \frac{1-\theta}{\theta}\left(\vx_{t+1}-\widetilde{\vx}\right) \right\|  +  \left\| B^{\top}\left(\vlambda_{t}-\vlambda_{t+1}\right) \right\| +  \left\| \rho B^{\top} A \left( \vz_{t} - \vx_{t+1}\right) \right\| \nonumber\\
& + \left\| \nabla f\left(\vx_{t+1}\right)-\nabla \hat{f}\left(\vx_{t}\right)\right\|  +  \left\|\rho \frac{1-\theta}{\theta} A^{\top} A\left(\widetilde{\vx}-\vx_{t+1}\right)\right\|  +  \left\|\frac{1}{\eta} Q \left(\vx_{t}-\vx_{t+1}\right)\right\| + \left\|A^{\top}\left(\vlambda_{t}-\vlambda_{t+1}\right)\right\|\Big]
\nonumber\\
\leq &  \mathbb{E}_{t}\big[ \zeta_{21}\left\|\vlambda_{t+1}-\vlambda_{t}\right\| + \zeta_{22}\left\|\vx_{t+1}-\widetilde{\vx}\right\|+ \zeta_{23}\left\|\vx_{t+1}-\vx_{t}\right\| + \left\|\hat{\nabla}f(\vx_t)-\nabla f(\vx_{t+1}) \right\| \big]\nonumber \\
\mathop{\leq } &\zeta_{24}\mathbb{E}_{t}\Big( \left\|\vx_{t+1}-\widetilde{\vx}\right\| + \left\|\vx_{t}-\widetilde{\vx}\right\| + \left\|\vx_{t+1}-\vx_{t}\right\|  + \left\|\vlambda_{t+1}-\vlambda_{t}\right\| +  \left\|\hat{\nabla}f(\vx_t)-\nabla f(\vx_{t+1}) \right\| \Big)  \nonumber\\
\mathop{\leq }^{(i)}  
&
\zeta_{24}\Big( \mathbb{E}_{t}\left\|\vx_{t+1}-\widetilde{\vx}\right\| + \left\|\vx_{t}-\widetilde{\vx}\right\| + \mathbb{E}_{t}\left\|\vx_{t+1}-\vx_{t}\right\| +\mathbb{E}_{t}\left\|\hat{\nabla}f(\vx_{t+1})-\nabla f(\vx_{t}) \right\| \Big) \nonumber\\
&+\zeta_{24}\zeta_{25}\mathbb{E}_{t}\left\|\vx_{t-1}-\vx_{t} \right\| +\zeta_{24}\zeta_{25} \big(\left\|\vx_{t}-\widetilde{\vx}\right\| + \left\|\vx_{t-1}-\widetilde{\vx}\right\| + \mathbb{E}_{t}\left\|\vx_{t+1}-\vx_{t} \right\| \big),
\end{align}}
where $\sigma_{max}^{B}$ is the largest positive eigenvalue of $B^{\top} B$ (or equivalently the smallest positive eigenvalue of $B B^{\top}$ ), $\phi_{\max}$ is the largest positive eigenvalue of the matrix $Q$, and the inequality (i) is due to  Lemma \ref{lemma of lambda} and and the inequality
$\sqrt{a^{2}+b^{2}+c^{2}+d^{2}} \leq a+b+c+d, \text{for any}\quad a,b,c,d \geq 0$.

We can further obtain
{
\begin{align}
&\mathbb{E}_{t} \big[dist\left(0, \partial \mathcal {L}_\rho\left(\vw^{t+1}\right)\right)\big] \nonumber\\
\mathop{\leq }^{(i)}& \zeta_{24}L[\left\|\vx_{t}-\widetilde{\vx}\right\|] +\zeta_{24}L\mathbb{E}_{t}[\left\|\vx_{t+1}-\vx_{t} \right\|]
+\zeta_{24}\mathbb{E}_{t}\Big( \left\|\vx_{t+1}-\widetilde{\vx}\right\| + \left\|\vx_{t}-\widetilde{\vx}\right\| + \left\|\vx_{t+1}-\vx_{t}\right\|   \nonumber\\
\quad &+\zeta_{25}(\left\|\vx_{t}-\widetilde{\vx}\right\| + \left\|\vx_{t-1}-\widetilde{\vx}\right\| + \left\|\vx_{t+1}-\vx_{t} \right\| +\left\|\vx_{t-1}-\vx_{t} \right\|)  \Big)  \nonumber\\
\leq & (\zeta_{24}L+ \zeta_{24} +  \zeta_{24}\zeta_{25} )[\left\|\vx_{t}-\widetilde{\vx}\right\|]+ \zeta_{24}\mathbb{E}_{t}[\left\|\vx_{t+1}-\widetilde{\vx}\right\|] + \zeta_{24}\zeta_{25} [\left\|\vx_{t-1}-\widetilde{\vx}\right\|] \nonumber\\
\quad &+ (\zeta_{24}L +\zeta_{24} +\zeta_{24}\zeta_{25} )\mathbb{E}_{t}[\left\|\vx_{t+1}-\vx_{t}\right\|]+ \zeta_{24}\zeta_{25}[\left\|\vx_{t}-\vx_{t-1}\right\|], \nonumber\\
\mathop{\leq }& \xi_{1} \big( \mathbb{E}_{t}\left\|\vx_{t+1}-\widetilde{\vx}\right\|+\left\|\vx_{t-1}-\widetilde{\vx}\right\|+ \mathbb{E}_{t}\left\|\vx_{t+1}-\vx_{t}\right\| +\left\|\vx_{t}-\widetilde{\vx}\right\|+\left\|\vx_{t}-\vx_{t-1}\right\|\big),\nonumber
\end{align}
where the parameters $\zeta_{21}, \zeta_{22}$ and $\zeta_{23}$ are
\begin{align} \label{equ def zeta}
\left\{\begin{array}{l}
\zeta_{21}=\frac{1}{\rho}+\sqrt{\sigma_{\max }^{B}}+\sqrt{\sigma_{\max }^{A}}, \\
\zeta_{22}=\rho \frac{1-\theta}{\theta} \sigma_{\max }^{A}, \\
\zeta_{23}=\frac{\phi_{\max }}{\eta}+ \rho \sqrt{\sigma_{\max }^{B} \sigma_{\max }^{A}} \\
\zeta_{24}=\max \left\{\zeta_{21}, \zeta_{22}, \zeta_{23}, 1\right\}, \\
\zeta_{25}=\max \left\{\sqrt{\frac{5 L^{2}}{\sigma_{\min }^{A}}}, \sqrt{\frac{5 \phi_{\max }^{2}}{\sigma_{\min }^{A} \eta^{2}}}, \sqrt{\frac{5\left(\eta^{2} L^{2}+\phi_{\max }^{2}\right)}{\sigma_{\min }^{A} \eta^{2}}}\right\}, \\
\xi_{1}\ =\zeta_{24} L+\zeta_{24}+\zeta_{24} \zeta_{25},
\end{array}\right.
\end{align}}
where the inequality (i) is due to the triangle inequality
$\| a+b \| \leq  \|a\|+\|b\|$, Assumption \ref{assum grad f}, the inequality $(\mathbb{E}\|\vx\|)^{2} \leq \mathbb{E}(\|\vx\|^{2})$ and the inequality (\ref{bjc deta}).

This completes the  proof.
\hfill $\blacksquare$

\section{Proof of Lemma \ref{thm 2}}\label{app:thm 2}
The convergence properties of the stochastic sequence $\left\{\vw_{t}=\left(\vx_{t}, \vy_{t}, \vlambda_{t}\right)\right\}$ under the Kurdyka-Lojasiewicz (KL) inequality condition will be investigated in the following sections.
It is important to acknowledge that the implementation of the KL technique displays slight differences between stochastic and deterministic algorithms.
For further details, readers are encouraged to refer to \cite{milzarek2023convergence,chouzenoux2023kurdyka}.			

Before proving the key Lemma \ref{thm 2}, we first prove the Lemma \ref{lemma 6}.

\begin{lemma}\label{lemma 6}
Let $\left\{\vw_{t}=\left(\vx_{t}, \vy_{t}, \vlambda_{t}\right)\right\}$ (for notational simplicity, we omit the label s) be the stochastic sequence generated by ADMM procedure. Let $S\left(\vw_{0}\right)$ denote the set of its limit points. With Definition \ref{kkt}, then we have

\begin{itemize}

\item[i)]$S\left(\vw_{0}\right)$ is a.s. a nonempty compact set, and $ dist\left(\vw_{t}, S\left(\vw_{0}\right)\right) \text{converges a.s. to 0};$

\item[ii)] $S\left(\vw_{0}\right) \subset {  \operatorname{crit} \mathcal{L}_{\rho}}$ a.s.;

\item[iii)] $\mathcal{L}_{\rho}(\cdot)$ is a.s. finite and constant on $S\left(\vw_{0}\right)$, equal to $\inf _{t \in N} \mathcal{L}_{\rho}\left(\vw_{t}\right)=\lim _{t \rightarrow+\infty} \mathcal{L}_{\rho}\left(\vw_{t}\right)$ a.s.

\end{itemize}
\end{lemma}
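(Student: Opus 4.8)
\emph{Proof plan.} The strategy is to transplant the standard deterministic Kurdyka--{\L}ojasiewicz machinery (cf.\ \cite{attouch2010proximal,wang2019global}) to the stochastic setting by fixing, once and for all, a single almost sure event $\Omega_0$ of full probability on which every quantitative estimate established so far holds pathwise, and then arguing deterministically along each sample path $\omega\in\Omega_0$. The enabling observation is that Lemma~\ref{lemma 4}, combined with Tonelli's theorem, gives $\mathbb{E}\big[\sum_{s,t}\|\vw_{t+1}^s-\vw_t^s\|^2\big]<+\infty$ and, re-reading that proof, $\mathbb{E}\big[\sum_{s,t}R_t^s\big]<+\infty$; hence on $\Omega_0$ the corresponding pathwise sums are finite, so $\|\vw_{t+1}-\vw_t\|\to 0$ and, by the definition (\ref{equ 21}) of $R_t^s$, $\|\vx_t-\widetilde{\vx}\|\to 0$, which by (\ref{x-update}) makes $\vz_t$ and the snapshot $\widetilde{\vx}$ share the cluster points of $\vx_t$. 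For \textbf{(i)}, boundedness of $\{\vw_t\}$ on $\Omega_0$ follows from the monotone (conditional-expectation) decrease of $\Psi^s_t$ in Lemma~\ref{detail lemma of h}, the lower bound of Assumption~\ref{assum2}, and the coercivity of $\mathcal{L}_\rho$ available under Assumptions~\ref{assum4}--\ref{Lip sub path} as in \cite{wang2019global}, confining $\vx_t,\vy_t$ to a sublevel set; then $\vlambda_t$ is controlled through the identity $A^\top\vlambda_{t+1}=\hat{\nabla}f(\vx_t)-\tfrac{1}{\eta}Q(\vx_t-\vx_{t+1})$ of (\ref{equ 23}) and Assumption~\ref{assum grad f}. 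A bounded sequence has a nonempty compact cluster-point set $S(\vw_0)$, and $\operatorname{dist}(\vw_t,S(\vw_0))\to 0$ is the classical consequence of boundedness together with $\|\vw_{t+1}-\vw_t\|\to 0$; since $\Omega_0$ has full measure, (i) holds a.s.

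For \textbf{(ii)}, fix $\omega\in\Omega_0$ and a subsequence $\vw_{t_k}\to\vw^*=(\vx^*,\vy^*,\vlambda^*)$. Since $\|\vx_{t_k}-\widetilde{\vx}\|\to 0$ we get $\vz_{t_k},\widetilde{\vx}\to\vx^*$, and, each $\nabla f_i$ being Lipschitz (Assumption~\ref{assum grad f}), $\hat{\nabla}f(\vx_{t_k})=\nabla f_{i_{t_k}}(\vx_{t_k})-\nabla f_{i_{t_k}}(\widetilde{\vx})+\nabla f(\widetilde{\vx})\to\nabla f(\vx^*)$. Passing to the limit in the optimality relations (\ref{opt cond}): the $\vlambda$-update gives $A\vx^*+B\vy^*-\vc=\lim_k(\vlambda_{t_k}-\vlambda_{t_k+1})/\rho=0$; the $\vx$-relation gives $\nabla f(\vx^*)=A^\top\vlambda^*$ (using $\vz_{t_k+1}-\vz_{t_k}\to 0$); and the $\vy$-relation, using $\vx_{t_k}-\vz_{t_k+1}\to 0$ and the closedness of the graph of $\partial g$ for the convex $g$, gives $B^\top\vlambda^*\in\partial g(\vy^*)$. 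By Definition~\ref{kkt}, $\vw^*\in\operatorname{crit}\mathcal{L}_\rho$, so $S(\vw_0)\subset\operatorname{crit}\mathcal{L}_\rho$ a.s.

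For \textbf{(iii)}, chaining (\ref{equ 44})--(\ref{equ 455}) shows that $\{\Psi^s_t\}$, re-indexed as one sequence adapted to $\{\mathcal{F}^s_t\}$, is a supermartingale bounded below by $\Psi^*$, so by the supermartingale convergence theorem (see \cite{milzarek2023convergence,chouzenoux2023kurdyka}) there is an integrable $\Psi_\infty$ with $\Psi^s_t\to\Psi_\infty$ a.s. Since $\mathcal{L}_\rho(\vw_t)=\Psi^s_t-\beta_5\|\vx_t-\vx_{t-1}\|^2-h^s_t\big(\|\vx_t-\widetilde{\vx}\|^2+\|\vx_{t-1}-\widetilde{\vx}\|^2\big)$ and the subtracted terms vanish on $\Omega_0$ (with $h^s_t$ bounded), $\mathcal{L}_\rho(\vw_t)\to\Psi_\infty$ a.s. For $\vw^*\in S(\vw_0)$ with $\vw_{t_k}\to\vw^*$, optimality of the $\vy$-subproblem (\ref{update-y1}) gives $g(\vy_{t_k+1})\le g(\vy^*)+\tfrac{\rho}{2}\big(\|A\vx_{t_k}+B\vy^*-\vc-\tfrac{\vlambda_{t_k}}{\rho}\|^2-\|A\vx_{t_k}+B\vy_{t_k+1}-\vc-\tfrac{\vlambda_{t_k}}{\rho}\|^2\big)$; letting $k\to\infty$ and using the already-established feasibility $A\vx^*+B\vy^*-\vc=0$, the two squared terms cancel, so $\limsup_k g(\vy_{t_k+1})\le g(\vy^*)$, and with lower semicontinuity $g(\vy_{t_k+1})\to g(\vy^*)$, hence $\mathcal{L}_\rho(\vw_{t_k+1})\to\mathcal{L}_\rho(\vw^*)$. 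Comparing with $\mathcal{L}_\rho(\vw_{t_k+1})\to\Psi_\infty$ gives $\mathcal{L}_\rho(\vw^*)=\Psi_\infty$ for every $\vw^*\in S(\vw_0)$, so $\mathcal{L}_\rho$ is a.s.\ finite and constant on $S(\vw_0)$, equal to $\Psi_\infty=\lim_t\mathcal{L}_\rho(\vw_t)=\inf_t\mathcal{L}_\rho(\vw_t)$.

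\textbf{Main obstacle.} The genuinely stochastic part is that ``sufficient decrease'' holds only in conditional expectation, so $\Psi$ is a supermartingale rather than a pathwise-monotone sequence: one must (a) invoke supermartingale convergence for its a.s.\ limit, (b) upgrade the $L^1$-summability in Lemma~\ref{lemma 4} to pathwise summability via Tonelli, and (c) commit to one full-measure event on which every ensuing deterministic KL argument (here and in Lemma~\ref{thm 2}) is run. Once this bookkeeping is done, parts (i)--(iii) follow the classical deterministic template and are essentially routine; the only other delicate point is using outer semicontinuity of $\partial g$ and $\vy$-subproblem optimality to pass the (non-continuous) $\mathcal{L}_\rho$ to the limit along subsequences.
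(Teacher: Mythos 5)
Your proposal is correct and follows essentially the same route as the paper's proof: pathwise summability of the successive differences (the paper gets it from the supermartingale convergence theorem applied to the descent inequalities, you get it from Lemma \ref{lemma 4} plus Tonelli --- both are fine), then the $\vy$-subproblem optimality combined with upper/lower semicontinuity of $\mathcal{L}_{\rho}$ to pass the function values to the limit, and closedness of $\partial g$ plus continuity of $\nabla f$ to land the limit points in $\operatorname{crit}\mathcal{L}_{\rho}$. The one place you go beyond the paper is part (i): the paper asserts nonemptiness and compactness of $S(\vw_0)$ directly from $\|\vw_{t+1}-\vw_t\|\to 0$ and offloads the required boundedness of $\{\vw_t\}$ to cited references, whereas you supply an explicit argument via a sublevel-set/coercivity claim for $\mathcal{L}_{\rho}$ and the identity controlling $\vlambda_{t+1}$ through $A^{\top}\vlambda_{t+1}$. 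Be aware that coercivity of $\mathcal{L}_{\rho}$ does not follow from Assumptions \ref{assum grad f}--\ref{Lip sub path} as literally stated (Assumption \ref{assum2} only gives lower boundedness), so that step rests on an unstated hypothesis --- but this is a lacuna shared with the paper's own proof, not a defect introduced by your argument. Your treatment of (iii), which explicitly deduces convergence of $\mathcal{L}_{\rho}(\vw_t)$ along the whole sequence from the supermartingale limit of $\Psi_t$ and the vanishing correction terms, is actually more complete than the paper's, which asserts $\inf_t\mathcal{L}_{\rho}(\vw_t)=\lim_t\mathcal{L}_{\rho}(\vw_t)$ without that intermediate step.
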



\emph{Proof:} \quad	We prove the results item by item.

i) 
 By applying the descent inequalities (\ref{equ 44}) and (\ref{equ 455}) along with the supermartingale convergence theorem, we can establish that 
\begin{align}\label{a.s. w}
\left\{\begin{array}{l}\sum_{s=1}^{+\infty} \sum_{t=1}^{+\infty}\left\|\mathbf{x}_{t+1}^s-\mathbf{x}_t^s\right\|^2<+\infty \quad \text { a.s.},\\ \sum_{s=1}^{+\infty} \sum_{t=1}^{+\infty}\left\|\mathbf{y}_{t+1}^s-\mathbf{y}_t^s\right\|^2<+\infty \quad \text { a.s.},\\ \sum_{s=1}^{+\infty} \sum_{t=1}^{+\infty}\left\|\vlambda_{t+1}^s-\vlambda_t^s\right\|^2<+\infty \quad \text { a.s.},\\
\sum_{s=1}^{+\infty} \sum_{t=1}^{+\infty}\left\|\vw_{t+1}^s-\vw_t^s\right\|^2<+\infty \quad \text { a.s.},	
\end{array}\right.
\end{align}
then we can further have $\left\|\vw_{t+1}-\vw_{t}\right\| \rightarrow 0 \quad \text { a.s.}$

Consequently,  for any sequence satisfying $\left\|\vw_{t+1}-\vw_{t}\right\| \rightarrow 0 \quad \text { a.s.}$, claim i) holds.
And we refer to \cite[propisition 2.3]{chouzenoux2023kurdyka}, \cite[Lemma A.14]{BLZhang21} for more details.

ii) Let $\left(\vx^{*}, \vy^{*}, \vlambda^{*}\right) \in S\left(\vw_{0}\right)$,  then there exists a subsequence $\left\{\left(\vx_{t_{j}}, \vy_{t_{j}}, \vlambda_{t_{j}}\right)\right\}$ of $\left\{\left(\vx_{t}, \vy_{t}, \vlambda_{t}\right)\right\}$ converging a.s. to $\left(\vx^{*}, \vy^{*}, \vlambda^{*}\right)$. Note that (\ref{a.s. w}) implies	
\begin{align} \label{equ 50}
\left\|\vw_{t+1}-\vw_{t}\right\| \rightarrow 0\quad \text{a.s},
\end{align}
which means $\vw_{t+1}$ converges a.s. to $\vw_{t}$ and $\left\{\left(\vx_{t_{j}+1}, \vy_{t_{j}+1}, \vlambda_{t_{j}+1}\right)\right\}$ also converges a.s. to $\left(\vx^{*}, \vy^{*}, \vlambda^{*}\right)$.
It is important to highlight that the convergence of random variables in an almost sure manner is observed in $\vw_{t+1}  \rightarrow \vw_{t}\quad \text{a.s}$. As a result, any newly derived conclusions based on this convergence will also hold with an almost sure guarantee.  As a result, many of the conclusions presented in the subsequent chapters differ from those in the previous chapters but can be considered almost surely valid.
Since $\vy_{t_{j}+1}$ is a minimizer of $\mathcal{L}_{\rho}\left(\vx_{t_{j}}, \vy, \vlambda_{t_{j}}\right)$ for the variable $\vy$, it holds that
{	
\begin{align}\label{equ 51}
\mathcal{L}_{\rho}\left(\vx_{t_{j}}, \vy_{t_{j}+1}, \vlambda_{t_{j}}\right) \leq \mathcal{L}_{\rho}\left(\vx_{t_{j}}, \vy^{*}, \vlambda_{t_{j}}\right).
\end{align}}

Then, it follows from Equations $(\ref{equ 50}), (\ref{equ 51})$ and the continuity of $\mathcal{L}_{\rho}(\cdot)$ with respect to $\vx$ and $\vlambda$ that
{	
\begin{align}\label{equ 52}
&\limsup _{j \rightarrow+\infty} \mathcal{L}_{\rho}\left(\vx_{t_{j}}, \vy_{t_{j}+1}, \vlambda_{t_{j}}\right) \nonumber\\
=&\limsup _{j \rightarrow+\infty} \mathcal{L}_{\rho}\left(\vx_{t_{j}+1}, \vy_{t_{j}+1}, \vlambda_{t_{j}+1}\right)\leq \mathcal{L}_{\rho}\left(\vx^{*}, \vy^{*}, \vlambda^{*}\right) a.s.
\end{align}}
On the other hand, by the lower semicontinuity of $\mathcal{L}_{\rho}(\cdot)$, we know
{	
\begin{align}\label{equ 53}
\liminf _{j \rightarrow+\infty} \mathcal{L}_{\rho}\left(\vx_{t_{j}+1}, \vy_{t_{j}+1}, \vlambda_{t_{j}+1}\right) \geq \mathcal{L}_{\rho}\left(\vx^{*}, \vy^{*}, \vlambda^{*}\right) a.s.
\end{align}}
The above two relations (\ref{equ 52}) and (\ref{equ 53}) show that $\lim _{j \rightarrow+\infty} g\left(\vy_{t_{j}+1}\right)=g\left(\vy^{*}\right)$ a.s. Because of the continuity of $\nabla f$ and the closeness of $\partial g$, taking limits in {  Equation (\ref{opt cond})} along the subsequence $\left\{\left(\vx_{t_{j}+1}, \vy_{t_{j}+1}, \vlambda_{t_{j}+1}\right)\right\}$ and using Equation (\ref{equ 50}) again, we almost surely have
$$
\begin{aligned}
&A^{\mathrm{T}} \vlambda^{*} = \partial f\left(\vx^{*}\right), \\
& B^{T}\vlambda^{*}\in \nabla g\left(\vy^{*}\right),  \\
&A \vx^{*}+B\vy^{*}-\vc=0.
\end{aligned}
$$
Then, $\left(\vx^{*}, \vy^{*}, \vlambda^{*}\right)$ is a.s. a critical point of the problem $(\ref{equ1})$, hence $w^{*} \in$ crit $\mathcal{L}_{\beta}$ a.s.

iii) For any point $\left(\vx^{*}, \vy^{*}, \vlambda^{*}\right) \in S\left(w^{0}\right)$, there exists a subsequence $\left\{\left(\vx_{t_{j}}, \vy_{t_{j}}, \vlambda_{t_{j}}\right)\right\}$ of $\left\{\left(\vx_{t}, \vy_{t}, \vlambda_{t}\right)\right\}$ converging to $\left(\vx^{*}, \vy^{*}, \vlambda^{*}\right)$. Combining Equations $(\ref{equ 52})$ and $(\ref{equ 53})$, we obtain
$$
\lim _{j \rightarrow+\infty} \mathcal{L}_{\rho}\left(\vx_{t_{j}}, \vy_{t_{j}}, \vlambda_{t_{j}}\right)=\mathcal{L}_{\rho}\left(\vx^{*}, \vy^{*}, \vlambda^{*}\right) \quad \text{a.s.}
$$

Therefore, $\mathcal{L}_{\rho}(\cdot)$ is a.s. constant on $S\left(\vw^{0}\right)$. Moreover, $\inf _{t \in N} \mathcal{L}_{\rho}\left(\vw_{t}\right)=\lim _{t \rightarrow+\infty} \mathcal{L}_{\rho}\left(\vw_{t}\right)$ a.s.
\hfill $\blacksquare$

~\\

With the established conclusions, we are now prepared to give the proof of Lemma \ref{thm 2}.

\emph{Proof:} \quad  From the proof of Lemma \ref{lemma 6}, it follows that $\mathcal{L}_{\rho}\left(\vw_{t}\right) \xrightarrow{a.s.} \mathcal{L}_{\rho}\left(\vw^{*}\right)$ for all $\vw^{*} \in S\left(\vw^{0}\right)$ . We consider two cases.

(i)We first consider this case: there exists an ﬁnite positive discrete random variable $t_{0}$ for which $\Psi_{t_{0}}=\mathcal{L}_{\rho}\left(\vw^{*}\right)$ a.s.

Taking full expectation operator to the inequality (\ref{equ 44}), we can have 
\begin{align}
&\mathbb{E}\Psi_{t+1} \mathop{\leq }^{(i)} \mathbb{E}\Psi_{t}-\Gamma_{t}\mathbb{E}\left\|\vx_{t+1}-\vx_{t}\right\|^{2}- w\mathbb{E}\left\|\vx_{t-1}-\widetilde{\vx}\right\|^{2} ,\nonumber\\
&\Gamma_{t} \mathbb{E}\left\|\vx_{t+1}-\vx_{t}\right\|^{2} + w\mathbb{E}\left\|\vx_{t-1}-\widetilde{\vx}\right\|^{2} 
\leqslant  \mathbb{E}\Psi_{t}-\mathbb{E}\Psi_{t+1} \leq \mathbb{E}\Psi_{t_{0}}-\mathbb{E}\mathcal{L}_{\rho}\left(\vw^{*}\right)=0\quad \text{a.s.}, 
\end{align}}
where  the parameters $\Gamma_{t}$, $w$ are defined in (\ref{equ 45}).
Then we can drive
{	
\begin{align}\label{xt x}
\Gamma_{t}\mathbb{E}\left\|\vx_{t+1}-\vx_{t}\right\|^{2} + w\mathbb{E}\left\|\vx_{t-1}-\widetilde{\vx}\right\|^{2} &= 0, \nonumber\\
\vx_{t+1}=\vx_{t} , \vx_{t-1}&=\widetilde{\vx} \quad \text{a.s.}, \nonumber\\
\vlambda_{t+1}=\vlambda_{t},  \vy_{t+1} &=\vy_{t} \quad \text{a.s.}  \tag{S.48}
\end{align}

Thus, for any $t \geq t_{0}$, we have
$\vx_{t+1}=\vx_{t}$ ,  $\vx_{t-1}=\widetilde{\vx}$, $\vlambda_{t+1}=\vlambda_{t}$ and $\vy_{t+1}=\vy_{t}  \quad \text{a.s.}$ Then the assertion holds.

(ii) We define  $\Pi=\liminf _{t \rightarrow+\infty}\left\{\omega \in \Omega \mid \Psi_{t}=\Psi_{t}(\omega)>\mathcal{L}_{\rho}\left(\vw^{*}(\omega)\right)\right\}$. We assume $\mathbb{P}(\Pi)=1$ and
$\Psi_{t}>\mathcal{L}_{\rho}\left(\vw^{*}\right)$ 
for all $t$ over the set $\Pi$.
Since $dist\left(\vw_{t}, S\left(\vw^{0}\right)\right) \xrightarrow{a.s.} 0$, it follows that for all $\epsilon>0$, there exists a ﬁnite positive discrete random variable $k_{1}>0$, such that for any $t>k_{1}, dist\left(\vw_{t}, S\left(\vw^{0}\right)\right)<\epsilon$ a.s. Again since $\Psi_{t} \xrightarrow{a.s.} \mathcal{L}_{\rho}\left(\vw^{*}\right)$, it follows for all $\eta>0$, there exists a ﬁnite positive discrete random variable $k_{2}>0$, such that for any $t>k_{2}, \Psi_{t} <\mathcal{L}_{\rho}\left(\vw^{*}\right)+\eta$ a.s. Consequently, for all $\epsilon, \eta>0$, when $t>\tilde{k}=\max \left\{k_{1},  k_{2}\right\}$, we almost surely have
$dist\left(\vw_{t}, S\left(\vw^{0}\right)\right)<\epsilon$, $\mathcal{L}_{\rho}\left(\vw^{*}\right)<\Psi_{t}<\mathcal{L}_{\rho}\left(\vw^{*}\right)+\eta. $
Since $S\left(\vw^{0}\right)$ is a.s. a nonempty compact set and $\mathcal{L}_{\rho}(\cdot)$ is a.s. constant on $S\left(\vw^{0}\right)$, applying the definition of KL property with $\Omega=S\left(\vw^{0}\right)$, we deduce that for any $t>\tilde{k}$
$$
\nabla \varphi\left(\Psi_{t}-\mathcal{L}_{\rho}\left(\vw^{*}\right)\right) dist\left(0, \partial \Psi_{t}\right) \geq 1 \quad \text{a.s.}
$$

We denote the $\Psi^{*} = \mathcal{L}_{\rho}\left(\vw^{*}\right)$, then the above inequality becomes
{	
\begin{align}\label{psi and d}
\nabla \varphi \left(\Psi_{t}-\Psi^{*}\right) dist\left(0, \partial \Psi_{t}\right) \geq 1 \quad \text{a.s.} \tag{S.49}
\end{align}}

From the concavity of $\varphi$, we get that
{	
\begin{align}\label{equ 56}
\varphi\left(\Psi_{t}-\Psi^{*}\right)-\varphi\left(\mathbb{E}_{t}\Psi_{t+1}-\Psi^{*}\right)
\geqslant & \nabla\varphi\left(\Psi_{t}-\Psi^{*}\right)\left(\Psi_{t}-\mathbb{E}_{t}\Psi_{t+1} \right) \quad\text{a.s.},  \tag{S.50} 
\end{align}}
and it amount to
{	
\begin{equation}
\begin{aligned}
\Psi_{t}-\mathbb{E}_{t}\Psi_{t+1} & \leqslant \frac{\varphi\left(\Psi_{t}-\Psi^{*}\right)-\varphi\left(\mathbb{E}_{t}\Psi_{t+1}-\Psi^{*}\right)}{\nabla \varphi\left(\Psi_{t}-\Psi^{*}\right)} \\
& \stackrel{(i)}{\leq}\left[\varphi\left(\Psi_{t}-\Psi^{*}\right)-\varphi\left(\mathbb{E}_{t}\Psi_{t+1}-\Psi^{*}\right)\right] \times dist\left(0, \partial \Psi_{t}\right) \quad \text{a.s.},
\end{aligned}\nonumber
\end{equation}}
where the inequality (i) holds by the inequality (\ref{psi and d}), and we set
{	 \begin{align}\label{def of delta}
\Pi_{p, q}:=\varphi\left(\Psi_{p}-\Psi^{*}\right)-\varphi\left(\Psi_{q}-\Psi^{*}\right). \tag{S.51}
\end{align}}
Moreover, recalling the equations (\ref{equ 44}) and (\ref{equ 455}), we have
{	\setlength{\abovedisplayskip}{10pt}
\setlength{\belowdisplayskip}{10pt}
\begin{align}\label{equ 57}
\Psi_{t}-\mathbb{E}_{t}\Psi_{t+1} \geqslant \tau \left(\mathbb{E}_{t}\left\|\vx_{t+1}-\vx_{t}\right\|^{2}+\left\|\vx_{t-1}-\widetilde{\vx}\right\|^{2}\right),
\tag{S.52}
\end{align}}
where $\tau = \min(\gamma,\omega)$ with $\gamma$ and $\omega$ being given by Theorem 1.

From the definition of the sequence $\{(\Psi^{s}_{t})_{t=1}^m\}_{s=1}^S$, we obtain
{	\setlength{\abovedisplayskip}{10pt}
\setlength{\belowdisplayskip}{10pt}
\begin{align}\label{equ 58}
dist\left(0, \partial \Psi_{t}\right) &=dist\big(0,( \partial\mathcal{L}_{\rho}\left(\vw_{t}\right)+2 h_{t}\left(\vx_{t}-\widetilde{\vx}\right)+2 \beta_5\left(\vx_{t}-\vx_{t-1}\right)) \big)\nonumber\\
& \leq dist\left(0, \partial\mathcal{L}_{\rho}\left(\vw_{t}\right)\right)+2 h_{t}\left\|\vx_{t}-\widetilde{\vx}\right\|+2 \beta_5 \| \vx_{t}-\vx_{t-1} \|.
\tag{S.53}
\end{align}}
Combine the (\ref{equ 56})-(\ref{equ 58}) we have
{	\setlength{\abovedisplayskip}{10pt}
\setlength{\belowdisplayskip}{10pt}
\begin{align}
&\tau \left(\mathbb{E}_{t}\left\|\vx_{t+1}-\vx_{t}\right\|^{2}+\left\|\vx_{t-1}-\widetilde{\vx}\right\|^{2}\right) \nonumber\\
\leq &\big[
\varphi\left(\Psi_{t}-\Psi^{*}\right)-\varphi\left(\mathbb{E}_{t}\Psi_{t+1}-\Psi^{*}\right)\big]
\bigg[dist\left(0, \partial\mathcal{L}_{\rho}\left(\vw_{t}\right)\right)+2 h_{t}\left\|\vx_{t}-\widetilde{\vx}\right\|+2 \beta_5\| \vx_{t}-\vx_{t-1} \|\bigg]
\quad\text{a.s.}\tag{S.54}
 \label{e12}
\end{align}}
Taking the full expectation and inserting Lemma \ref{partial of Lagra} into the above inequality, we see that
\begin{align}\label{equ 701}
&\mathbb{E}[dist\left(0, \partial \Psi_{t}\right)] +2 h_{t}\mathbb{E}\left\|\vx_{t}-\widetilde{\vx}\right\|
+2 \beta_5 \mathbb{E}\left\| \vx_{t}-\vx_{t-1}\right\| \nonumber\\
\leq & \xi_{max} \bigg[\mathbb{E}\left\|\vx_{t+1}-\widetilde{\vx}\right\|+\mathbb{E}\left\|\vx_{t-1}-\widetilde{\vx}\right\| +\mathbb{E}\left\|\vx_{t+1}-\vx_{t}\right\| +\mathbb{E}\left\|\vx_{t}-\widetilde{\vx}\right\|+\mathbb{E}\left\|\vx_{t}-\vx_{t-1}\right\| \bigg],
\tag{S.55}
\end{align}
where $\xi_{max} = \xi_1 + 2h_t +2\beta_5$ and the inequality (\ref{equ 701}) becomes
\begin{align}
\mathbb{E}[dist\left(0, \partial \Psi_{t}\right)]\leq  &\xi_{max} \bigg[\mathbb{E}\left\|\vx_{t+1}-\widetilde{\vx}\right\|+\mathbb{E}\left\|\vx_{t-1}-\widetilde{\vx}\right\| +\mathbb{E}\left\|\vx_{t+1}-\vx_{t}\right\|\nonumber\\ &+\mathbb{E}\left\|\vx_{t}-\widetilde{\vx}\right\|+\mathbb{E}\left\|\vx_{t}-\vx_{t-1}\right\| \bigg].\tag{S.56}
 \label{e14}
\end{align}

Now, combine (\ref{e12}) and (\ref{e14}) and take the full expectation on (\ref{e12}) to obtain
\begin{align}
&\tau   \left(\mathbb{E}\left\|\vx_{t+1}-\vx_{t}\right\|^{2}+\mathbb{E}\left\|\vx_{t-1}-\widetilde{\vx}\right\|^{2}\right) \nonumber\\
\leq & \xi_{max} 
\mathbb{E}\big[
\varphi\left(\Psi_{t}-\Psi^{*}\right)-\varphi\left(\mathbb{E}_{t}\Psi_{t+1}-\Psi^{*}\right)\big]
\mathbb{E} \bigg[\left\|\vx_{t+1}-\widetilde{\vx}\right\|+\left\|\vx_{t-1}-\widetilde{\vx}\right\|  + \left\|\vx_{t+1}-\vx_{t}\right\|\nonumber\\ &+\left\|\vx_{t}-\tilde {\vx}\right\|+\left\|\vx_{t}-\vx_{t-1}\right\| \bigg]
\quad \text{a.s.}\nonumber
\end{align}

It follows from the inequality $\sqrt{a^{2}+b^{2}} \geqslant \frac{\sqrt{2}}{2}(a+b) $ that
{	
\begin{align}
&4\left(\mathbb{E}\left\|\vx_{t+1}-\vx_{t}\right\|+\mathbb{E}\left\|\vx_{t-1}-\widetilde{\vx}\right\|\right)\nonumber\\
\leq &4 \sqrt{\xi_{max}} \sqrt{\frac{2}{\tau}} \bigg[\mathbb{E}\left\|\vx_{t+1}-\widetilde{\vx}\right\| +\mathbb{E}\left\|\vx_{t-1}-\widetilde{\vx}\right\|+\mathbb{E}\left\|\vx_{t+1}-\vx_{t}\right\| +\mathbb{E}\left\|\vx_{t}-\widetilde{\vx}\right\|+\mathbb{E}\left\|\vx_{t}-\vx_{t-1}\right\| \bigg]^{\frac{1}{2}}\nonumber\\
&\times \left\{\mathbb{E}\big[
\varphi\left(\Psi_{t}-\Psi^{*}\right)-\varphi\left(\mathbb{E}_{t}\Psi_{t+1}-\Psi^{*}\right)\big]\right\} ^{\frac{1}{2}} \quad \text{a.s.},
\tag{S.57}
\end{align}}
which by the Cauchy-Schwartz inequality $2 \sqrt{ab} \leq a+b$  further gives
\begin{align}\label{equ 61}
&4\left(\mathbb{E}\left\|\vx_{t+1}-\vx_{t}\right\|+\mathbb{E}\left\|\vx_{t-1}-\widetilde{\vx}\right\|\right)\nonumber\\ \leq & \Big(2 \sqrt{\xi_{max}} \sqrt{\frac{2}{\tau}} \Big)^2 \mathbb{E}\big[
\varphi\left(\Psi_{t}-\Psi^{*}\right)-\varphi\left(\mathbb{E}_{t}\Psi_{t+1}-\Psi^{*}\right)\big] +\bigg[\mathbb{E}\left\|\vx_{t+1}-\widetilde{\vx}\right\|+\mathbb{E}\left\|\vx_{t-1}-\widetilde{\vx}\right\|   \nonumber\\
&+\mathbb{E}\left\|\vx_{t+1}-\vx_{t}\right\|+\mathbb{E}\left\|\vx_{t}-\widetilde{\vx}\right\|+\mathbb{E}\left\|\vx_{t}-\vx_{t-1}\right\| \bigg],\nonumber\\
\stackrel{(i)}{\leq} &\Big(2 \sqrt{\xi_{max}} \sqrt{\frac{2}{\tau}} \Big)^2  \mathbb{E}\big[
\varphi\left(\Psi_{t}-\Psi^{*}\right)-\varphi\left(\Psi_{t+1}-\Psi^{*}\right)\big] +\bigg[\mathbb{E}\left\|\vx_{t+1}-\widetilde{\vx}\right\|+\mathbb{E}\left\|\vx_{t-1}-\widetilde{\vx}\right\|  \nonumber\\
&+\mathbb{E}\left\|\vx_{t+1}-\vx_{t}\right\| +\mathbb{E}\left\|\vx_{t}-\widetilde{\vx}\right\|+\mathbb{E}\left\|\vx_{t}-\vx_{t-1}\right\| \bigg],\nonumber\\
=& \Big(2 \sqrt{\xi_{max}} \sqrt{\frac{2}{\tau}} \Big)^2  \mathbb{E}(\Pi_{t, t+1}) +\bigg[\mathbb{E}\left\|\vx_{t+1}-\widetilde{\vx}\right\|+\mathbb{E}\left\|\vx_{t-1}-\widetilde{\vx}\right\| +\mathbb{E}\left\|\vx_{t+1}-\vx_{t}\right\|  +\mathbb{E}\left\|\vx_{t}-\widetilde{\vx}\right\|\nonumber\\
&+\mathbb{E}\left\|\vx_{t}-\vx_{t-1}\right\| \bigg] \quad \text{a.s.}
\tag{S.58}
\end{align}
In the above analysis, we use the abbreviation of $\mathbb{E}[\cdot \mid \mathcal{F}_{t}]$ as $\mathbb{E}_{t}[\cdot ]$, and  apply the conditional Jensen’s inequality to concave function $\varphi$ as
\begin{align}
&\mathbb{E}\big[
\varphi\left(\mathbb{E}_{t}\Psi_{t+1}-\Psi^{*}\right)\big] = \mathbb{E}\big[
\varphi\left(\mathbb{E}\Psi_{t+1}-\Psi^{*}\mid \mathcal{F}_{t} \right)\big]\nonumber\\ \geq 
&\mathbb{E}\big[\mathbb{E}\big[
\varphi\left(\Psi_{t+1}-\Psi^{*}\right)\mid \mathcal{F}_{t}\big]\big]=
\mathbb{E}\big[
\varphi\left(\Psi_{t+1}-\Psi^{*}\right)\big].\nonumber
\end{align}

Summing up  (\ref{equ 61}) from
$t =\tilde{t}+ 1,\cdots, m$  yields
{ 
\begin{align}\label{equ 62}
&\quad 2\mathbb{E}\left\|\vx_{\tilde{t}}-\widetilde{\vx}\right\|+\mathbb{E}\left\|\vx_{\tilde{t}+1}-\widetilde{\vx}\right\|+  2\sum_{t=\tilde{t}+1}^{m}\mathbb{E}\left\|\vx_{t+1}-\vx_{t}\right\|+\sum_{t=\tilde{t}+1}^{m}\mathbb{E}\left\|\vx_{t-1}-\widetilde{\vx}\right\|  +\mathbb{E}\left\|\vx_{m+1}-\vx_{m}\right\| \nonumber\\
& \leqslant  2\mathbb{E}\left\|\vx_{m}-\widetilde{\vx}\right\|+\mathbb{E}\left\|\vx_{m+1}-\widetilde{\vx}\right\| + (2 \sqrt{\xi_{max}} \sqrt{\frac{2}{\tau}} )^2\times \mathbb{E}[\varphi\left(\psi_{\tilde{t}+1}-\psi^{*} \right)-\varphi\left(\psi_{m+1}-\psi^{*}\right)] \nonumber\\
&\quad +\mathbb{E}\left\|\vx_{\tilde{t}+1}-\vx_{\tilde{t}}\right\|\nonumber\\
&< + \infty \quad \text{a.s.}
\tag{S.59}
\end{align}}
Let $m$  be $+\infty$, by the equation (\ref{xt x}), it yields
{ 
\[
\sum_{t=\tilde{t}+1}^{+\infty}\mathbb{E}\left\|\vx_{t+1}-\vx_{t}\right\|
<+\infty \quad \text{a.s.} \quad\textrm{and}\quad
\sum_{t=\tilde{t}+1}^{+\infty}\mathbb{E}\left\|\vx_{t+1}-
\widetilde{\vx}\right\|<+\infty \quad \text{a.s.}
\]}
Similarly we can obtain that
{	
\[
\sum_{t=\tilde{t}+1}^{+\infty}\mathbb{E}\left\|\vy_{t+1}-\vy_{t}
\right\|<+\infty \quad \text{a.s.} \quad\textrm{and}\quad
\sum_{t=\tilde{t}+1}^{+\infty}\mathbb{E}\left\|
\vlambda_{t+1}-\vlambda_{t}\right\|
<+\infty \quad \text{a.s.}
\]}
Hence, $\sum_{k=1}^{+\infty}\mathbb{E}\left\|\vx_{t+1}-\vx_{t}\right\|<+\infty \quad \text{a.s.}$ Besides, we note that
{	
$$
\begin{aligned}
&\mathbb{E}\left\|\vw_{t+1}-\vw_{t}\right\| \\ =&\mathbb{E}\left(\left\|\vx_{t+1}-\vx_{t}\right\|^{2}+\left\|\vy_{t+1}-\vy_{t}\right\|^{2}+\left\|\vlambda_{t+1}-\vlambda_{t}\right\|^{2}\right)^{1 / 2} \\
\leq & \mathbb{E}\left\|\vx_{t+1}-\vx_{t}\right\|+\mathbb{E}\left\|\vy_{t+1}-\vy_{t}\right\|+\mathbb{E}\left\|\vlambda_{t+1}-\vlambda_{t}\right\| \quad \text{a.s.}
\end{aligned}
$$}
Consequently, 
\begin{align}
&\sum_{t=0}^{+\infty}\mathbb{E}\left\|\vw_{t+1}-\vw_{t}\right\|<+\infty \quad \text{a.s.},\nonumber \\ &\sum_{t=0}^{+\infty}\left\|\vw_{t+1}-\vw_{t}\right\|<+\infty  \quad \text{a.s.}
\tag{S.60}
\end{align}
This completes the whole proof.
\hfill $\blacksquare$

\begin{remark}
In Lemma \ref{thm 1}, we have presented the framework of convergence analysis and obtained
$$
\sum_{t=0}^{+\infty}\left\|\vw_{t+1}-\vw_{t}\right\|^{2} <+\infty \quad \text{a.s.}
$$
However, it is unclear whether their results can be extended to
$$
\sum_{t=0}^{+\infty}\left\|\vw_{t+1}-\vw_{t}\right\|<+\infty \quad \text{a.s.}
$$ 

Lemma \ref{thm 2} gives some sufficient conditions to guarantee the finite length property. Combining Lemma \ref{lemma 4} and Lemma \ref{thm 2}, we can draw the gradient can be bounded by the iteration points.
\end{remark}

\section{Proof of Theorem \ref{thm 3}}\label{app:thm 3}
Now, under the   {KL} property defined in Definition \ref{KŁ1} (see \cite{attouch2010proximal}), we make full use of the decreasing property of the potential energy function $\Psi_t^s$ and the boundedness of $dist\left(0, \partial \Psi_{t}\right)$ (see Lemma \ref{partial of Lagra}) to prove Theorem \ref{thm 3}.

\emph{Proof:} \quad 	 First, we prove a newly defined sequence $N_t$ in (\ref{Nt}) converges a.s. to zero Q-linearly.
By the KL property at $(\vx^{*}, \vy^{*}, \vlambda^{*})$ we have
\begin{align} \label{grad psi}
\nabla \varphi \left(\Psi_{\tilde{t}+1}-\Psi^{*}\right) dist\left(0, \partial \Psi_{\tilde{t}+1}\right) \geqslant 1  \quad \text{a.s.}
\tag{S.61}
\end{align}

Using the definition of $\varphi(s)=\tilde{c} s^{1-\Tilde{\mu}}, \Tilde{\mu} \in[0,1), \tilde{c}>0$  in Theorem \ref{thm 3}, we can insert $\nabla \varphi(s)= \tilde{c}(1-\Tilde{\mu})s^{-\Tilde{\mu}} $ into (\ref{grad psi}) to deduce
\begin{align}\label{nabla psi}
\tilde{c}(1-\Tilde{\mu})(\Psi_{\tilde{t}+1}-\Psi^{*})^{-\Tilde{\mu}}dist\left(0, \partial \Psi_{\tilde{t}+1}\right) \geqslant 1  \quad \text{a.s.}, \nonumber\\
\left({\Psi}_{\tilde{t}+1}-\Psi^{*}\right)^{\Tilde{\mu}} \leq \tilde{c} (1-\Tilde{\mu}) dist\left(0, \partial \psi_{\tilde{t}}+1\right)  \quad \text{a.s.} \tag{S.62}
\end{align}
Using the expression for $\varphi(s)=\tilde{c} s^{1-\Tilde{\mu}}$, and the equation (\ref{equ 701}) again to obtain
\begin{align}\label{equ 68}
&\mathbb{E} \varphi\left(\Psi_{\tilde{t}+1}-\Psi^{*}\right)\nonumber\\
\leq &\gamma \mathbb{E} \bigg[\left\|\vx_{\tilde{t}+1}-\widetilde{\vx}\right\|+\left\|\vx_{\tilde{t}-1}-\widetilde{\vx}\right\| +\left\|\vx_{\tilde{t}+1}-\vx_{t}\right\|  +\left\|\vx_{\tilde{t}}-\tilde{\vx}\right\|+\left\|\vx_{\tilde{t}}-\vx_{\tilde{t}-1}\right\| \bigg]^{\frac{1-\Tilde{\mu}}{\Tilde{\mu}}}  \quad \text{a.s.}, \tag{S.63}
\end{align}
where  $\gamma = \tilde{c}\left[\tilde{c} (1-\Tilde{\mu}) \xi_{\max }\right]^{\frac{1-\Tilde{\mu}}{\Tilde{\mu}}}$.

Next,
we focus on the case where $\Tilde{\mu}\in\left(0, \frac{1}{2}\right]$ and $\frac{1-\Tilde{\mu}}{\Tilde{\mu}} \geq 1$. When $\tilde{t} \rightarrow +\infty$, the number under the $\frac{1-\Tilde{\mu}}{\Tilde{\mu}}$ root is infinitely small. And other cases can be proved similarly, which has been studied in \cite{bolte2014proximal}. In this case, it follows from
Equation (\ref{equ 68}) that 
\begin{align}\label{equ 69}
\mathbb{E}\varphi\left(\Psi_{\tilde{t}+1}-\Psi^{*}\right) \leq &\gamma \bigg[\mathbb{E}\left\|\vx_{\tilde{t}+1}-\widetilde{\vx}\right\|+\mathbb{E}\left\|\vx_{\tilde{t}-1}-\widetilde{\vx}\right\| +\mathbb{E}\left\|\vx_{\tilde{t}+1}-\vx_{\tilde{t}}\right\| +\mathbb{E}\left\|\vx_{\tilde{t}}-\tilde{\vx}\right\|\nonumber\\
&+ \mathbb{E}\|\vx_{\tilde{t}} -\vx_{\tilde{t}-1}\| \bigg] \quad \text{a.s.} \tag{S.64}
\end{align}

Setting $m$ in the equation (\ref{equ 62}) to be $+\infty$ with		
 $\lim_{m \rightarrow+\infty}\mathbb{E}\left\|\vx_{m}-\widetilde{\vx}\right\| = 0$,  $\lim_{m \rightarrow+\infty}\mathbb{E}\left\|\vx_{m+1}-\vx_{m}\right\|= 0  \quad \text{a.s.}$,  then we can see that
{
\begin{align}\label{equ 70}
& 2\sum_{t=\tilde{t}+1}^{+\infty}\mathbb{E}\left\|\vx_{t+1}-\vx_{t}\right\|+ \sum_{t=\tilde{t}+1}^{+\infty}\mathbb{E}\left\|\vx_{t-1}-\widetilde{\vx}\right\| + \mathbb{E}\left\|\vx_{\tilde{t}+1}-\widetilde{\vx} \right\| +2\mathbb{E}\left\|\vx_{\tilde{t}}-\widetilde{\vx} \right\|  \nonumber\\
\leqslant & \mathbb{E}\left\|\vx_{\tilde{t}+1}-\vx_{\tilde{t}}\right\| + (2 \sqrt{\xi_{max}} \sqrt{\frac{2}{\tau}} )^2\mathbb{E}\varphi\left(\Psi_{\tilde{t}+1}-\Psi^{*}\right)< + \infty  \quad \text{a.s.}
\tag{S.65}
\end{align}}

Now set
{\setlength{\abovedisplayskip}{2pt}
\setlength{\belowdisplayskip}{2pt}
\begin{align}
\Delta_{t}^{1}&:=\sum_{i=t}^{+\infty}\mathbb{E}\left\|\vx_{i+1}-\vx_{i}\right\|,\nonumber\\
\Delta_{t}^{2}&:=\sum_{i=t}^{+\infty}\mathbb{E}\left\|\vx_{i}-\widetilde{\vx}\right\|, 
\tag{S.66}
\label{delta 12}
\end{align}}
and $L_{\varphi} = (2 \sqrt{\xi_{max}} \sqrt{\frac{2}{\tau}} )^2$, it follows from
Equations (\ref{equ 69}) and (\ref{equ 70}) that
{	\setlength{\abovedisplayskip}{2pt}
\setlength{\belowdisplayskip}{2pt}
\begin{align}
&2\Delta_{\tilde{t}+1}^{1} + \Delta_{\tilde{t}}^{2} +2 \Delta_{\tilde{t}}^{2}- 2 \Delta_{\tilde{t}+1}^{2}+\Delta_{\tilde{t}+1}^{2}-\Delta_{\tilde{t}+2}^{2} \nonumber\\
\leq & \Delta_{\tilde{t}}^{1}-\Delta_{\tilde{t}+1}^{1}+ L_{\varphi} \gamma \big[\Delta_{\tilde{t}+1}^{2}-\Delta_{\tilde{t}+2}^{2}
+\Delta_{\tilde{t}}^{1}-\Delta_{\tilde{t}+1}^{1}+\Delta_{\tilde{\tilde{t}}}^{2}-\Delta_{\tilde{t}+1}^{2}
+\Delta_{\tilde{t}-1}^{1}-\Delta_{\tilde{t}}^{1}+ \Delta_{\tilde{t}-1}^{2}-\Delta_{\tilde{t}}^{2}\big]  \quad \text{a.s.} \nonumber
\end{align}}

To simplify this expression, let's replace the label $\tilde{t}$ with $t$ and  set
\begin{align}
a_{t} := \Delta_{\tilde{t}}^{1},~	b_{t} := \Delta_{\tilde{t}}^{2},
\tag{S.67} \label{a bt}
\end{align}
the above inequality can rewritten as the following form
{	
\begin{align}\label{at and bt}
0 & \leq -(L_{\varphi}\gamma+3)a_{t+1} + a_{t} + L_{\varphi }\gamma a_{t-1} + L_{\varphi} \gamma  b_{t-1} -3b_{t} +b_{t+1} + (1-L_{\varphi }\gamma )b_{t+2}  \quad \text{a.s.}
\tag{S.68}
\end{align}}

Let us introduce some constants $A, B, C, D, E, F,$ and $ G$ satisfied  the following conditions, which is the key trick  to driving the linear convergence rate
\setlength{\abovedisplayskip}{10pt}
\setlength{\belowdisplayskip}{10pt}
\begin{equation} \label{equ abc}
\left\{\begin{array}{l}
A=L_{\varphi} \gamma, \\
B-A C=1, \\
-B C=-(L_{\varphi}\gamma+3), \\
B=1+ L_{\varphi}\gamma \times C, \\
D=L_{\varphi}\gamma, \\
E-G D=-3, \\
F-G E=1,  \\
-G F = 1- L_{\varphi}\gamma. \\
\end{array}\right.
\tag{S.69}
\end{equation}
{Then} (\ref{at and bt}) can be rewritten as
{	
\begin{align}\label{equ 755}
0 \leq & -BC a_{t+1} + (B-AC)a_{t} +Aa_{t-1} + D b_{t-1} + (E-GD)b_t +(F-GE)b_{t+1} - GF b_{t+2}  \quad \text{a.s.} \tag{S.70}
\end{align}}
We can rearrange the terms in the above inequality (\ref{equ 755}) to obtain
{	
\begin{align}
C\left(A a_{t}+B a_{t+1}\right)+G\left(D b_{t}+E b_{t+1}+F b_{t+2}\right)
\leqslant & (A a_{t-1}+B a_{t})+\left(D b_{t-1}+E b_{t}+F b_{t+1}\right)  \quad \text{a.s.}
\tag{S.71}
 \label{equ 733}
\end{align}}
Let $I=\min \{C, G\} >1$, from (\ref{equ 733}), we have
{	
\begin{align}
I\left[\left(A a_{t}+B a_{t+1}\right)+\left(D b_{t}+E b_{t+1}+F b_{t+2}\right)\right]
\leqslant & A a_{t-1}+B a_{t}+\left(D b_{t-1}+E b_{t}+F b_{t+1}\right)  \quad \text{a.s.} 
\tag{S.72}
\label{equ 744}
\end{align}}
Denote
{	\setlength{\abovedisplayskip}{2pt}
\setlength{\belowdisplayskip}{2pt}
\begin{equation}
N_{t}:=A  a_{t-1}+B a_{t}+\left(D b_{t-1}+E b_{t}+F b_{t+1}\right),
\tag{S.73} \label{Nt}
\end{equation}}
and insert the symbol $N_{t}$ and $N_{t-1}$ into the inequality (\ref{equ 744}) we get the desired result
{	
\begin{equation}
\frac{N_{t+1}}{N_{t}} \leq \frac{1}{I}  \quad \text{a.s.}
\tag{S.74} 
 \label{Q linear}
\end{equation}}

Then we prove the existence of the parameter. First, we obtain the part of the solution in the equation (\ref{equ abc}):
{	\setlength{\abovedisplayskip}{10pt}
\setlength{\belowdisplayskip}{10pt}
$$
\left\{\begin{array}{l}
L_{\varphi}\gamma C^{2 } +C -(L_{\varphi} \gamma+3)=0, \\
B=L_{\varphi} \gamma C +1, \\
C=\frac{-1 + \sqrt{1+4 L_{\varphi} \gamma (3+L_{\varphi}\gamma)}}{2 L_{\varphi} \gamma} >1,0<\frac{1}{C}<1. \end{array}\right.
$$}

Also, we have the rest part of the solution in the equation (\ref{equ abc}):
{	\setlength{\abovedisplayskip}{10pt}
\setlength{\belowdisplayskip}{10pt}
\begin{equation} \label{equ defgg}
\left\{\begin{array}{l}
D=L_{\varphi}\gamma, \\
E-G D=-3, \\
F-G E=1,  \\
1 - L_{\varphi}\gamma)=-G F. \\
\end{array}\right.
\tag{S.75} 
\end{equation}}

Using the equation (\ref{equ defgg}), we have{	\setlength{\abovedisplayskip}{10pt}
\setlength{\belowdisplayskip}{10pt}
\begin{equation} \label{equ rewrite defg}
\left\{\begin{array}{l}
E = L_{\varphi}\gamma G -3, \\
F = GE +1, \\
L_{\varphi}\gamma  G^{3}- 3G^{2}+G+(1- L_{\varphi}\gamma)=0. \\
\end{array}\right.\nonumber
\end{equation}}

Set $H(G) = L_{\varphi}\gamma  G^{3}- 3G^{2}+G+(1- L_{\varphi}\gamma)$, $H(1)<0, H(+\infty)>0$. Thus we can choose the root $G>1$ of $H(G) $. Since the parameters $G$ and $C$ defined in (\ref{equ abc}) are both greater than $1$, we can obtain that $I$  satisfies this condition $I=\min \{C, G\} >1$. The existence of these parameters defined in (\ref{equ abc}) and $I$ are proved.

Second,
the above shows that the sequence $\left\{ N_{t} \right\}$ converges a.s. to zero Q-linearly\footnote{For the sequence {$\left\{x_n\right\}_{n \in \mathbb{N}}$ with  		
$
\lim\limits_{n \rightarrow \infty} x_n=x^{*},
$
if}
$$
\lim _{n \rightarrow \infty} \frac{\left\|x_{n+1}-x^{*}\right\|}{\left\|x_n-x^{*}\right\|}\leq \gamma,
$$
where $0<\gamma<1$,  {then} the sequence $\left\{x_n\right\}_{n \in \mathbb{N}}$is said to converge to $x^{*}$ Q-linearly, and the constant $\gamma$ is called the rate of (linear) convergence.}. As a result, we can use the triangle inequality with the notations in (\ref{delta 12}) and (\ref{a bt}) to yield:
\begin{align}\label{triangle}
A\mathbb{E}\left\|\vx^*-\vx_{t}\right\|= &A \mathbb{E}\left\|\sum_{i=t}^{+\infty}(\vx_{i+1}-\vx_{i})\right\|, \nonumber\\
\leq & A\sum_{i=t}^{+\infty}\mathbb{E}\left\|\vx_{i+1}-\vx_{i}\right\|, \nonumber\\
\leq &N_t  \quad \text{a.s.},
\tag{S.76} 
\end{align}
and it is sufficient to say the sequence $\left\{ \vx_t\right\}$ converges a.s. to $\vx^*$ R-linearly from the definition of R-linear convergence in Section \ref{sec:intro}. Furthermore, from the formulas (\ref{y-bound}) and (\ref{upp1}) we can see the sequences $\left\{ \vy_t\right\}$ and $\left\{ \vlambda_t\right\}$ can be controlled by the sequence $\left\{ \vx_t\right\}$ and converge a.s. to $\vy^*$ and $\vlambda^*$ R-linearly, respectively. Combing the R-linear convergences of the sequences $\left\{ \vx_t\right\}$, $\left\{ \vy_t\right\}$, and $\left\{ \vlambda_{ t} \right\}$ and the definition of R-linear convergence in Section \ref{sec:intro}, we can derive the desired result: the sequence
$\left\{ \left(\vx_{t}, \vy_{t}, \vlambda_{t}\right) \right\}$ converges a.s. to $\left(\vx^{*}, \vy^{*}, \vlambda^{*}\right)$ R-linearly with a existing sequence $\left\{ \hat{c}  \xi_{3}^{t} \right\} $ as follows:

$$
\mathbb{E}\left\|\left(\vx_{t}, \vy_{t}, \vlambda_{t}\right)-\left(\vx^{*}, \vy^{*}, \vlambda^{*}\right)\right\| \leq \hat{c}  \xi_{3}^{t}  \quad \text{a.s.}
$$
and
$$\left\|\left(\vx_{t}, \vy_{t}, \vlambda_{t}\right)-\left(\vx^{*}, \vy^{*}, \vlambda^{*}\right)\right\| \leq \hat{c}  \xi_{3}^{t}  \quad \text{a.s.}
$$
{This} completes the whole proof.

\hfill $\blacksquare$

\end{document}